\documentclass{article}
\usepackage{graphicx, amsmath, subcaption, amsfonts, amsthm, amssymb, comment, mathtools} 
\usepackage[margin = 2.54cm]{geometry}
\usepackage{tikz}
\usetikzlibrary{
    arrows.meta, calc, positioning, patterns, backgrounds, fit
}
\definecolor{activeviolet}{RGB}{104,72,156}
\definecolor{upperblue}{RGB}{49,108,181}
\definecolor{lowerorange}{RGB}{218,124,48}
\definecolor{pairred}{RGB}{181,54,61}
\definecolor{pairgreen}{RGB}{49,128,97}
\tikzset{
    gridvertex/.style={
        circle,
        fill=black,
        inner sep=1.3pt
    },
    selectedvertex/.style={
        circle,
        fill=black,
        draw=black,
        inner sep=2.2pt
    },
    distinguished/.style={
        circle,
        fill=white,
        draw=black,
        line width=0.8pt,
        inner sep=2.1pt
    },
    regionlabel/.style={
        font=\small
    },
    pairline/.style={
        line width=1.1pt
    },
    activebox/.style={
        draw=black,
        line width=1.2pt,
        rounded corners=1pt
    },
    oldbox/.style={
        draw=black,
        dashed,
        line width=0.8pt
    },
    updatearrow/.style={
        -{Stealth[length=2mm]},
        line width=0.9pt
    },
      gridline/.style={draw=black!27,line width=0.32pt},
      activebounds/.style={draw=activeviolet,dashed,very thick},
      initialvertex/.style={circle,fill=black,draw=white,line width=0.3pt,
        inner sep=2.1pt},
      firstpair/.style={circle,fill=pairgreen,draw=white,line width=0.3pt,
        inner sep=2.2pt},
      secondpair/.style={circle,fill=upperblue,draw=white,line width=0.3pt,
        inner sep=2.2pt},
      thirdpair/.style={circle,fill=pairred,draw=white,line width=0.3pt,
        inner sep=2.2pt},
      pairendpoint/.style={circle,fill=pairred,draw=white,line width=0.3pt,
        inner sep=2.0pt},
      zetapoint/.style={circle,fill=lowerorange,draw=white,line width=0.3pt,
        inner sep=1.7pt},
      upperregion/.style={draw=upperblue,fill=upperblue!14,line width=0.8pt},
      lowerregion/.style={draw=lowerorange,fill=lowerorange!17,line width=0.8pt}
}
\newcommand{\integergrid}[2]{%
  \draw[gridline] (0,0) grid (#1,#2);
  \foreach \x in {0,...,#1}{%
    \foreach \y in {0,...,#2}{\fill[black!42] (\x,\y) circle (0.55pt);}%
  }%
}
\usepackage{mathrsfs}
\usepackage{hyperref}
\hypersetup{
    colorlinks=true,
    linkcolor=blue,     
    filecolor=magenta,      
    urlcolor=blue,           
    pdftitle={Minimal Resolving Sets in Rectangular Grid Graphs: A Complete Characterization and Enumeration},
    pdfauthor={Caroline Huang}
}
\usepackage{cleveref}

\title{Minimal Resolving Sets in Rectangular Grid Graphs: A Complete Characterization and Enumeration}
\author{Caroline Huang}
\date{September 2026}

\newtheorem{theorem}{Theorem}[section]
\newtheorem{lemma}[theorem]{Lemma}
\newtheorem{corollary}[theorem]{Corollary}

\newtheorem{definition}[theorem]{Definition}
\newtheorem{construction}[theorem]{Construction}

\DeclareMathOperator{\Res}{Res}
\usepackage[
    backend=biber,
    style=numeric,
    giveninits=true,
    sortcites=true,
    maxbibnames=99
]{biblatex}

\DeclareNameAlias{author}{family-given}
\DeclareNameAlias{editor}{family-given}

\begin{document}

\maketitle
\begin{abstract}
    Let $m,n \geq 3$. A set of vertices $S$ of the rectangular grid graph $P_m \square P_n$ is resolving if the taxicab distance vectors of the vertices of $P_m \square P_n$ with respect to $S$ are pairwise distinct. A resolving set $S$ is minimal if no proper subset of $S$ is resolving, and a minimal resolving set of cardinality $k$ is called a $k$-minimal. We extend the work of Andersen et al., who characterized $3$-minimals, established the maximum cardinality $2\min(m,n)-2$, and posed the complete characterization and enumeration of minimal resolving sets for grids as an open problem, and Adar and Epstein, who showed that $3$ is the only possible odd cardinality and that every minimal resolving set of cardinality at least $4$ can be ordered to form a sequence corresponding to a zigzag sequence. We provide a recursive construction that generates exactly the minimal resolving sets of cardinality at least $4$ for grids. We derive from the construction closed-form formulas to enumerate the $k$-minimals for all even $4 \leq k \leq 2\min(m,n) -2$. Together with the known characterizations for cardinalities $2$ and $3$ and a direct enumeration of the $3$-minimals, this yields a complete characterization and enumeration of the minimal resolving sets of rectangular grid graphs of every possible cardinality.
\end{abstract}

\section{Introduction} \label{section: introduction}

A resolving set is a collection of landmarks in which every vertex of a graph is uniquely determined by its vector of distances to the landmarks. The usual metric dimension problem asks for the fewest number of landmarks that suffice. Here, we focus on a complementary irredundancy question: in which resolving sets is every landmark essential, even when the set is not of minimum cardinality? This distinction is substantial even for rectangular grids. Their metric dimension is always $2$, whereas an inclusion-minimal resolving set may have cardinality as large as $2\min(m,n)-2$ \cite{andersen2016minimum, melter1984metric}. Thus, metric dimension describes only the smallest cardinality resolving sets and leaves a larger family of irredundant resolving sets unexplored.

For a simple, finite, connected graph $G=(V,E)$, a subset $S = \{s_1, s_2, ..., s_\ell \}$ of $V(G)$ is a resolving set of $G$ if for any two distinct $u, v \in V(G)$,
\begin{equation*}
    (d(u,s_1), d(u,s_2),\dots, d(u, s_\ell)) \neq (d(v,s_1), d(v,s_2),\dots, d(v, s_\ell)),
\end{equation*}
where $d(x,y)$ is the length of the shortest path from vertex $x$ to vertex $y$.

For a given graph $G$, the metric dimension of $G$ is the minimum cardinality of a resolving set of $G$. Metric dimension was introduced independently by Slater \cite{slater1975leaves} and Harary and Melter \cite{harary1976metric} in the 1970s, and has since been studied in relation to network discovery \cite{beerliovaetal2006} and landmark placement for robot navigation \cite{khuller1996landmarks}. 
\\\\
\textbf{Minimal Resolving Sets.}
Much of the literature on resolving sets focuses on metric dimension \cite{chartrand2003survey, chartrandpoissonzhang2000, khuller1996landmarks}, and its variants such as edge \cite{kelencedge2018}, mixed \cite{kelencmixed2017}, broadcast \cite{genesonbroadcast2022}, and fault-tolerant \cite{hernandofaulttolerant2008}. However, a resolving set may be inclusion-minimal without having minimum cardinality. A resolving set $S$ of a graph $G$ is inclusion-minimal if no proper subset of $S$ is resolving. In this paper, we refer to inclusion-minimal resolving sets as minimal resolving sets. Minimal resolving sets of cardinality $k$ are called $k$-minimals. 
\\\\
\textbf{Previous Work.}
The metric dimension of a rectangular grid is known to be $2$, and its metric bases, or $2$-minimals, have been characterized \cite{melter1984metric}. Andersen, Grigorious, and Miller investigated minimal resolving sets of grid graphs, characterizing all $3$-minimals, establishing structural restrictions on larger minimal resolving sets, and proving that $\dim^+(P_m \square P_n) = 2\min(m,n)-2$ where $\dim^+(G)$ is the maximum cardinality of a minimal resolving set of graph $G$ \cite{andersen2016minimum}. The paper leaves ``a complete characterization and enumeration of all the minimals of the grid" as future work. 

Adar and Epstein \cite{adarepstein2016alg} showed that every minimal resolving set of cardinality at least $4$ can be ordered to form a sequence corresponding to a zigzag sequence and that $3$ is the only possible odd cardinality for minimal resolving sets. However, the correspondence to zigzag sequences does not provide an if-and-only-if criterion for minimality or an enumeration of the minimal resolving sets of a grid, as not all resolving sets corresponding to a zigzag sequence are minimal. This is the gap we address in this paper.
\\\\
\textbf{Main Results.}
We solve the complete characterization and enumeration problem for every rectangular grid $P_m \square P_n$ with $m,n \geq 3$. The main idea is to convert the global requirement of distinguishing every pair of grid vertices into local, recursive requirements with a geometrically constrained family of unresolved diagonal pairs, which allows for a description of a construction that recursively generates minimal resolving sets. At each stage, two vertices are selected from explicit admissible regions, shrinking the set of unresolved vertex pairs until no unresolved pair remains. We prove both directions: every output of the construction is minimal and resolving, and every minimal resolving set of cardinality at least $4$ arises from the construction. Consequently, the construction gives not only an algorithm for generating these sets but also an if-and-only-if structural characterization from which closed enumeration formulas follow. It also gives a direct geometric explanation of the parity phenomenon proved by Adar and Epstein, that $3$ is the only possible odd cardinality of a minimal resolving set for grids.

The main results are as follows. For every $m,n\ge 3$, Construction \ref{construction} generates exactly the inclusion-minimal resolving sets of $P_m\square P_n$ of cardinality at least $4$. That is, Construction \ref{construction} generates a set of vertices if and only if it is a minimal resolving set of cardinality at least $4$. Consequently, $P_m\square P_n$ has a $k$-minimal resolving set if and only if 
\begin{equation*}
    k\in\{2,3\} \cup \{2r:2\le r\le \min(m,n)-1\},
\end{equation*}
and we obtain the following closed formula for the number of $k$-minimals $N_{k}(m,n)$ of cardinalities greater than $4$ in Theorem \ref{thm: enumeration} where $k=2r$ and $r \geq 3$:
\begin{equation*}
    N_{2r}(m,n) = \frac{1}{r}\left[(m+3r-1)\binom{m+r-2}{2r-1}
        \binom{n+r-3}{2r-2} + (n+3r-1)\binom{n+r-2}{2r-1}\binom{m+r-3}{2r-2}\right].
\end{equation*}
Combining this with our enumeration for $k=3$, we obtain the following for the number of $k$-minimals of a grid $P_m \square P_n$ in Corollary \ref{coro: finalenum}:
\begin{align*}
      N_k(m,n) = \begin{dcases}
        4 & k=2\\
        2\binom{m}{3} + (n-2)(n-1)m + 2\binom{n}{3} + (m-2)(m-1)n - 2(m  + n - 4) & k=3
        \\
        \frac{1}{2}\left[(m+5)\binom{m}{3}\binom{n-1}{2} + (n+5)\binom{n}{3} \binom{m-1}{2}\right] + 2\binom{m-2}{2} \binom{n-2}{2} & k=4\\
        \frac{1}{r}\left[(m+3r-1)\binom{m+r-2}{2r-1}
        \binom{n+r-3}{2r-2} + (n+3r-1)\binom{n+r-2}{2r-1}\binom{m+r-3}{2r-2}\right] & k = 2r\\
        0 & \text{else,}
        \end{dcases}
    \end{align*}
    where $3 \leq r \leq \min(m,n) -1$.
\\\\
\textbf{Roadmap.}
This paper provides a characterization and enumeration through a construction that produces exactly the $k$-minimals with $k \geq 4$. Section \ref{section: Notation and Terminology} standardizes notation and global conventions used in later sections. Section \ref{section: Preliminary Lemmas} reduces the process of checking whether a set of vertices is resolving to checking pairs of vertices of one particular form. Section \ref{section: Construction} presents the construction and proves that it generates exactly the minimal resolving sets of $P_m \square P_n$ of size at least $4$. An immediate corollary of the construction is that all minimal resolving sets of $P_m \square P_n$ with size at least $4$ have even cardinality. Finally, Section \ref{section: Enumeration} provides an explicit enumeration of the $k$-minimals of $P_m \square P_n$ for even $k \geq 4$.
\\\\
\textbf{Broader Significance.}
A resolving set is defined by a global condition involving all $\binom{mn}{2}$ pairs of grid vertices. Our main structural result reduces this global condition to a recursive invariant: at every stage, all relevant unresolved pairs are encoded by a single nested active rectangle and two extremal pairs. 

This provides a tractable model for working within a substantially more difficult problem. Recent work by Bergougnoux et al. \cite{bergougnouxdefrainmcinerney2025} shows that enumerating minimal resolving sets for general graphs is equivalent to hypergraph transversal enumeration, whose solvability in total-polynomial time is a longstanding open problem in algorithmic enumeration. This places our exact enumeration for rectangular grids within a broader difficult enumeration problem. For rectangular grids, the geometric rigidity of the unresolved pairs allows us to go beyond an enumeration algorithm and obtain closed formulas for enumeration. 

\section{Notation and Terminology} \label{section: Notation and Terminology}
Throughout this paper, $P_m \square P_n$ refers to a two-dimensional $m \times n$ grid graph where $m, n \geq 3$ with vertices
\begin{equation*}
    V_{m,n} := \{0, \dots, m-1\} \times \{0, \dots, n-1\}
\end{equation*}
labeled as $(i, j)$ where $i \in \{0, 1, ..., m-1\}$ and $j \in \{0, 1, ..., n-1\}$. The first coordinate is the horizontal $x$-coordinate, and the second coordinate is the vertical $y$-coordinate.

A boundary vertex of $P_m \square P_n$ has $x$-coordinate $0$ or $m-1$ or has $y$-coordinate $0$ or $n-1$. An interior vertex of $P_m \square P_n$ is a vertex $(x,y)$ with $x \in [1, m-2]$ and $y \in [1, n-2]$.

Given vertices $u,w \in V_{m,n}$, define $\Res(\{u,w\})$ as the set of all vertices that resolve $u$ and $w$. That is,
\begin{equation*}
    \Res(\{u, w\}) := \{z \in V_{m,n} \mid d(z, u) \neq d(z, w)\}.
\end{equation*}

Equivalently, a resolving set $S$ is minimal if and only if every $s \in S$ uniquely resolves some pair of vertices. That is,
\begin{equation*}
    S \text{ is minimal} \iff \forall s \in S, \exists u, w \in V_{m,n} \text{ such that } u \neq w \text{ and } S \cap \Res(\{u, w\}) = \{s\}.
\end{equation*}
We say that a set of vertices $M$ breaks minimality if some vertex in $M$ does not uniquely resolve any pair of vertices. This implies that no superset of $M$ can form a minimal resolving set.

For integers $A,B \in [0, m-1]$ and $C,D \in [0, n-1]$, define 
\begin{align*}
    \mathcal{Z}(A,B,C,D) = \{\{(a,b),(a+t,b+t)\} \mid t\in\mathbb{Z}_{>0}; a,a+t\in[A,B]_{\mathbb{Z}}; b,b+t\in[C,D]_{\mathbb{Z}}\},\\
    \mathcal{R}(A,B,C,D) = \{(x,y) \mid x \in [A,B]_{\mathbb{Z}}, y \in [C,D]_{\mathbb{Z}}\},
\end{align*}
where $[A,B]_{\mathbb{Z}} = [A,B] \cap \mathbb{Z}$. 
\\
\begin{figure}[hbt!]
  \centering
  \begin{tikzpicture}[scale=0.6,font=\small]
    % The rectangle is drawn first so the grid remains visible.
    \fill[upperblue!10] (0.82,0.82) rectangle (5.18,4.18);
    \integergrid{6}{5}
    \draw[upperblue,very thick,rounded corners=1pt]
      (0.82,0.82) rectangle (5.18,4.18);

    % Three representative elements of Z(A,B,C,D).
    \draw[pairred,very thick] (1,1)--(3,3);
    \draw[lowerorange,very thick] (2,1)--(5,4);
    \draw[pairgreen,very thick] (3,2)--(5,4);
    \foreach \P in {(1,1),(3,3)}
      \node[pairendpoint] at \P {};
    \foreach \P in {(2,1),(5,4)}
      \node[circle,fill=lowerorange,draw=white,line width=0.3pt,
        inner sep=2pt] at \P {};
    \foreach \P in {(3,2),(5,4)}
      \node[circle,fill=pairgreen,draw=white,line width=0.3pt,
        inner sep=1.45pt] at \P {};

    \node[below=4pt] at (1,0) {$A$};
    \node[below=4pt] at (5,0) {$B$};
    \node[left=4pt] at (0,1) {$C$};
    \node[left=4pt] at (0,4) {$D$};
    \node[upperblue,fill=white,inner sep=2pt] at (3,4.55)
      {$R(A,B,C,D)$};
    \node[align=left,anchor=west] at (6.55,3.55)
      {$R(A,B,C,D)$ is a set\\of grid vertices.};
    \node[align=left,anchor=west] at (6.55,1.75)
      {$Z(A,B,C,D)$ is a set of\\slope-$1$ unordered pairs.};
    \draw[-{Stealth[length=2mm]},pairred]
      (6.45,1.65) to[bend right=10] (4.15,2.85);
  \end{tikzpicture}
  \caption{$R(A,B,C,D)$ and $Z(A,B,C,D)$ are shown above bounded inclusively by $x=A, x=B, y=C$, and $y=D$. The blue
  rectangle indicates the vertex set $R(A,B,C,D)$; the three colored
  segments represent sample unordered pairs
  $\{(a,b),(a+t,b+t)\}\in Z(A,B,C,D)$.}
  \label{fig:R-versus-Z}
\end{figure}

We use the convention that $[A,B] \cap \mathbb{Z} = \varnothing$ when $A > B$ and $[C,D] \cap \mathbb{Z} = \varnothing$ when $C > D$.
These definitions hold even when $A\geq B$ or $C\geq D$. Since all coordinates are integral, $\mathcal{Z}(A,B, C,D)\neq\varnothing \iff A<B \text{ and } C<D$ and $\mathcal{R}(A,B,C,D) \neq \varnothing \iff A \leq B \text{ and } C \leq D$.

\section{Reduction to a Class of Vertex Pairs} \label{section: Preliminary Lemmas}

By Proposition $2$ in \cite{andersen2016minimum}, all $k$-minimals contain two opposite-side boundary vertices. The following lemmas characterize which pairs of vertices in the grid remain unresolved once such two boundary vertices are chosen.
\begin{lemma}\label{lem: char nonres pairs}
    In $P_m \square P_n$, suppose that the two boundary vertices on opposite sides are on the opposite horizontal sides at $v_{-1} = (p, n-1)$ and $v_0 = (q, 0)$ where $p, q \in \{0, 1, ..., m-1\}$ and $p < q$. Then, a pair of vertices is not resolved by $v_{-1}$ and $v_0$ if and only if it is of one of the following two forms:
    \begin{itemize}
        \item $\{(a,b), (a+t, b+t)\}$ where $t \in \mathbb{Z}, t >0$, $a, a+t \in [p, q]$ and $b, b+t \in [0, n-1]$,
        \item $\{(p - t, b), (q + t, b + q - p)\}$ where $t \in \mathbb{Z}, t >0$, $p-t, q+t \in [0, m-1]$, and $b, b+q - p\in [0, n-1]$.
    \end{itemize}
\end{lemma}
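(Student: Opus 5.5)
The plan is a direct computation: write both distance functions explicitly, reduce ``not resolved'' to two equations in the coordinates, and solve them by a short case analysis on where the $x$-coordinates lie relative to $p$ and $q$.

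\textbf{Setting up the equations.} For a vertex $u=(x,y)$ we have
\begin{equation*}
d(u,v_{-1}) = |x-p| + (n-1-y), \qquad d(u,v_0) = |x-q| + y .
\end{equation*}
Write $f(x)=|x-p|$ and $g(x)=|x-q|$. Two distinct vertices $u_1=(x_1,y_1)$ and $u_2=(x_2,y_2)$ are not resolved by $\{v_{-1},v_0\}$ if and only if
\begin{equation*}
f(x_1)-f(x_2) = y_1-y_2 \quad\text{and}\quad g(x_1)-g(x_2) = y_2-y_1 .
\end{equation*}
Adding the two equations shows that this system is equivalent to the pair of conditions $h(x_1)=h(x_2)$, where $h=f+g$, and $y_1-y_2 = f(x_1)-f(x_2)$. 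So everything reduces to understanding the level sets of $h$ and then reading off $y_1-y_2$.

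\textbf{Level sets of $h$.} Because $p<q$, the function $h$ has the following shape:
\begin{itemize}
\item $h(x)=q-p$ on $[p,q]$;
\item $h(x)=(q-p)+2(p-x)$ for $x<p$, strictly decreasing in $x$ and strictly greater than $q-p$;
\item $h(x)=(q-p)+2(x-q)$ for $x>q$, strictly increasing in $x$ and strictly greater than $q-p$.
\end{itemize}
Hence $h(x_1)=h(x_2)$ occurs in exactly three ways.
\begin{itemize}
\item $x_1=x_2$. Then $y_1-y_2=f(x_1)-f(x_2)=0$, so $u_1=u_2$, which is excluded.
\item Both $x_1,x_2\in[p,q]$. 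Here $f(x)=x-p$, so $y_1-y_2=x_1-x_2$. Ordering the pair so that $x_2>x_1$ and setting $t=x_2-x_1>0$ gives the first form $\{(a,b),(a+t,b+t)\}$ with $a,a+t\in[p,q]$.
\item One coordinate is left of $p$ and the other right of $q$, say $x_1=p-t$ and $x_2=q+t$ with the same $t>0$. Then $f(x_1)=t$ and $f(x_2)=q-p+t$, so $y_2=y_1+(q-p)$. This is the second form with $b=y_1$.
\end{itemize}
Mixed cases, with one $x$-coordinate inside $[p,q]$ and the other outside, are impossible, since $h$ is strictly larger than $q-p$ outside $[p,q]$. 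In all cases the range constraints stated in the lemma are simply the requirement that both vertices lie in $V_{m,n}$.

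\textbf{Converse.} Substituting each of the two forms back into the distance formulas shows directly that both distance coordinates agree, so such pairs are indeed unresolved.

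I expect no real obstacle. The only care needed is making the case split on the positions of $x_1,x_2$ relative to $p$ and $q$ exhaustive, and justifying the strict monotonicity of $h$ outside $[p,q]$ so that the mixed cases are ruled out.
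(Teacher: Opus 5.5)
Your proposal is correct and follows essentially the same route as the paper. Both reduce non-resolution to the equality $|x_1-p|+|x_1-q|=|x_2-p|+|x_2-q|$, use that this function is constant on $[p,q]$ and strictly larger and strictly monotone outside it, split into the same cases, and check the converse by direct substitution; your rewriting of the system as $h(x_1)=h(x_2)$ together with the single equation $y_1-y_2=f(x_1)-f(x_2)$ makes the equivalence a bit more transparent but is the same argument.
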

\begin{proof}
    Let $u=(x,y)$ and $w=(x',y')$ be distinct vertices with $x \leq x'$ in $V_{m,n}$ not resolved by $v_{-1}=(p,n-1)$ or $v_0=(q,0)$. If $x = x'$, then by either distance equation, $y = y'$, contradicting that $u$ and $w$ are distinct. Thus, $x < x'$. Then, we must have
    \begin{align*}
        d((x,y), (p, n-1)) = d((x', y'), (p, n-1)) \iff |x - p| + (n-1) - y = |x' - p| + (n-1) - y'\\
        \iff y' - y = |x' - p| - |x-p|,
        \\
        d((x,y), (q, 0)) = d((x', y'), (q, 0)) \iff |x-q| + y = |x' - q| + y'
        \iff y' - y = |x-q| - |x' - q|
        \\
        \implies
        |x' - p| - |x - p| = |x - q| - |x' - q| \iff |x' - p| + |x' - q| = |x-p| + |x-q|.
    \end{align*}
    Define $g(r) = |r - p| + |r - q|$. We find that $g(r)$ is constant for $r \in [p, q]$, strictly decreasing for $[0, p-1]$, and strictly increasing for $[q+1, m-1]$. Thus, for $x < x'$, $g(x) = g(x')$ implies that either $x, x' \in [p, q]$ or $x \in [0, p-1]$, $x' \in [q+1, m-1]$.

    If $x, x' \in [p, q]$, then $y' - y = (x' - p) - (x - p) = x' - x$, so we have that $\{(x,y), (x', y')\}$ is of the form $\{(a, b),(a+t, b+t)\}$ where $t \in \mathbb{Z}, t >0$, $a, a+t \in [p, q]$ and $b, b+t \in [0, n-1]$.

    If $x \in [0, p-1]$ and $x' \in [q+1, m-1]$, then the equality also implies $x = p - t$ and $x' = q + t$ for some $t \in \mathbb{Z}, t > 0$. Then, $y' - y = (x' - p) - (p - x) = -2p + x' + x = q-p$, so $\{(x,y), (x', y')\}$ is of the form $\{(p-t, b),(q+t, b+q-p)\}$ where $t \in \mathbb{Z}, t >0$, $p-t, q+t \in [0, m-1]$, and $b, b+q - p\in [0, n-1]$.

    Conversely, direct substitution shows that every pair of vertices of one of these two forms is unresolved by $\{(p, n-1), (q, 0)\}$. 
\end{proof}

Let $F_{=1}$ pairs be all pairs of vertices of the form $\{(a,b), (a+t, b+t)\}$ where $t \in \mathbb{Z}, t >0$, $a, a+t \in [p, q]$ and $b, b+t \in [0, n-1]$. Let $F_{<|1|}$ pairs be all pairs of vertices of the form $\{(p - t, b), (q + t, b + q - p)\}$ where $t \in \mathbb{Z}, t >0$, $p-t, q+t \in [0, m-1]$, and $b, b+q - p\in [0, n-1]$.

\begin{figure}[htbp]
  \centering
  \begin{subfigure}[t]{0.48\textwidth}
    \centering
    \begin{tikzpicture}[scale=0.59,font=\scriptsize]
      \fill[activeviolet!7] (1.80,-0.18) rectangle (4.20,5.18);
      \integergrid{6}{5}
      \draw[activeviolet,dashed,thick] (2,-0.18)--(2,5.18);
      \draw[activeviolet,dashed,thick] (4,-0.18)--(4,5.18);

      \node[initialvertex,label={[font=\scriptsize]above left:
        $v_{-1}=(p,n-1)$}] at (2,5) {};
      \draw[pairred,very thick] (2,1)--(4,3);
      \node[pairendpoint,label={[font=\scriptsize]below left:$(a,b)$}]
        at (2,1) {};
      \node[pairendpoint,label={[font=\scriptsize]above right:
        $(a+t,b+t)$}] at (4,3) {};
      \node[below=4pt] at (2,0) {$p$};
      \node[below=4pt] at (4,0) {$q$};
      \node[fill=white,inner sep=2pt] at (3,5.55)
        {$Q_0$-distance vector $(4,3)$ for both endpoints};
    \end{tikzpicture}
    \caption{An $F_{=1}$ pair: both $x$-coordinates lie in $[p,q]$.}
  \end{subfigure}\hfill
  \begin{subfigure}[t]{0.48\textwidth}
    \centering
    \begin{tikzpicture}[scale=0.59,font=\scriptsize]
      \fill[activeviolet!7] (1.80,-0.18) rectangle (4.20,5.18);
      \integergrid{6}{5}
      \draw[activeviolet,dashed,thick] (2,-0.18)--(2,5.18);
      \draw[activeviolet,dashed,thick] (4,-0.18)--(4,5.18);

      \node[initialvertex,label={[font=\scriptsize]above left:
        $v_{-1}=(p,n-1)$}] at (2,5) {};
      \draw[pairred,very thick] (1,1)--(5,3);
      \node[pairendpoint,label={[font=\scriptsize]below left:
        $(p-t,b)$}] at (1,1) {};
      \node[pairendpoint,label={[font=\scriptsize]above right:
        $(q+t,b+q-p)$}] at (5,3) {};
      \node[below=4pt] at (2,0) {$p$};
      \node[below=4pt] at (4,0) {$q$};
      \node[fill=white,inner sep=2pt] at (3,5.55)
        {$Q_0$-distance vector $(5,4)$ for both endpoints};
    \end{tikzpicture}
    \caption{An $F_{<|1|}$ pair: the endpoints lie outside the strip.}
  \end{subfigure}
  \caption{The two forms of vertex pairs left unresolved by $Q_0=\{(p,n-1),(q,0)\}$ in Lemma 3.1. The displayed distance vectors use $m=7$, $n=6$, $p=2$, $q=4$, and $t=2$ in (a), while (b) uses $t=1$.}
  \label{fig:pair-types}
\end{figure}

Therefore, to construct a minimal resolving set of $P_m \square P_n$, once two opposite boundary vertices have been chosen, the only pairs of unresolved vertices we must consider are the ones stated in Lemma \ref{lem: char nonres pairs}. 

For the following lemmas, given $\alpha_1, \alpha_2, x \in \mathbb{Z}$ where $\alpha_1 < \alpha_2$, define
\begin{equation*}
    \Delta_{\alpha_1, \alpha_2} (x) = |x - \alpha_1| - |x - \alpha_2| = \begin{cases}
        \alpha_1 - \alpha_2 &\text{if } x \leq \alpha_1 \\
        2x - \alpha_1 - \alpha_2 &\text{if } \alpha_1 < x < \alpha_2 \\
        \alpha_2 - \alpha_1 &\text{if } \alpha_2 \leq x.
    \end{cases}
\end{equation*}
A vertex $(x,y)$ fails to resolve a pair of vertices $\{(\alpha_1, \beta_1),(\alpha_2, \beta_2)\}$ where $\alpha_1 < \alpha_2$ and $\beta_1 < \beta_2$ if and only if
\begin{align*}
    d((x,y), (\alpha_1, \beta_1)) = d((x,y), (\alpha_2, \beta_2)) 
    \iff |x - \alpha_1| + |y - \beta_1| = |x - \alpha_2| + |y- \beta_2| \\
    \iff \left(|x - \alpha_1| - |x - \alpha_2|\right) + \left(|y - \beta_1| - |y - \beta_2|\right) = 0 \iff  \Delta_{\alpha_1, \alpha_2}(x) + \Delta_{\beta_1, \beta_2} (y) = 0.
\end{align*}

\begin{lemma}\label{lem: char res vertices slope 1}
    Given a pair of vertices of the $F_{=1}$ form $\{(a,b), (a+t, b+t)\}$ where $t \in \mathbb{Z}_{>0}$ and $(a,b), (a+t, b+t) \in V_{m,n}$, the set of vertices that resolve the pair is
    \begin{equation*}
        V_{m,n} \setminus (\mathcal{R}(0, a, b+t, n-1) \cup \{(a+i,b+t-i) \mid i \in \mathbb{Z}, 1 \leq i \leq t-1\} \cup \mathcal{R}(a+t, m-1, 0, b)),
    \end{equation*}
    Equivalently, all vertices that fail to resolve the pair are in the set
    \begin{equation*}
        \mathcal{R}(0, a, b+t, n-1) \cup \{(a+i,b+t-i) \mid i \in \mathbb{Z}, 1 \leq i \leq t-1\} \cup \mathcal{R}(a+t, m-1, 0, b).
    \end{equation*}
\end{lemma}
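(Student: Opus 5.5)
We use the criterion stated just before the lemma: a vertex $(x,y)$ fails to resolve $\{(a,b),(a+t,b+t)\}$ if and only if $\Delta_{a,a+t}(x)+\Delta_{b,b+t}(y)=0$. Here $\alpha_1=a$, $\alpha_2=a+t$, $\beta_1=b$, $\beta_2=b+t$, so both differences equal $t$. Each $\Delta$ is a nondecreasing piecewise-linear function with values in $[-t,t]$: it equals $-t$ at and below the lower endpoint, equals $t$ at and above the upper endpoint, and equals $2x-2a-t$ (resp.\ $2y-2b-t$) strictly between. We will determine all $(x,y)\in V_{m,n}$ with $\Delta_{a,a+t}(x)=-\Delta_{b,b+t}(y)$ by splitting into the three regimes for $x$. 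The resolving set is then the complement of this non-resolving set, so the two formulations in the statement are equivalent and it suffices to prove the second.

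\emph{Case $x\le a$.} Here $\Delta_{a,a+t}(x)=-t$, so we need $\Delta_{b,b+t}(y)=t$. By monotonicity, and since the middle branch satisfies $|2y-2b-t|<t$ for $b<y<b+t$, this holds exactly when $y\ge b+t$. This gives $\mathcal{R}(0,a,b+t,n-1)$.

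\emph{Case $x\ge a+t$.} Symmetrically, we need $\Delta_{b,b+t}(y)=-t$, which holds exactly when $y\le b$. This gives $\mathcal{R}(a+t,m-1,0,b)$.

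\emph{Case $a<x<a+t$.} Write $x=a+i$ with $1\le i\le t-1$, so $\Delta_{a,a+t}(x)=2i-t\in(-t,t)$. Then $\Delta_{b,b+t}(y)=t-2i$ is strictly between $-t$ and $t$, which forces $b<y<b+t$. The equation then becomes $2y-2b-t=t-2i$, that is, $y=b+t-i$. This gives precisely the antidiagonal points $(a+i,b+t-i)$ for $1\le i\le t-1$.

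All of these points lie in $V_{m,n}$: the antidiagonal points have coordinates strictly between those of the two endpoints of the pair, and the endpoints are themselves in $V_{m,n}$. The main point of care, rather than a real obstacle, is showing that the boundary values $\pm t$ of $\Delta$ are attained only outside the open interval. This is what excludes mixed combinations, such as $x\le a$ with $y$ strictly inside $(b,b+t)$, and it follows from strictness of the middle branch. Since the three cases are exhaustive and each equivalence is two-sided, the union of the three sets is exactly the non-resolving set, and taking complements in $V_{m,n}$ gives $\Res(\{(a,b),(a+t,b+t)\})$.
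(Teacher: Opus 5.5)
Your proposal is correct and follows the paper's proof: the same criterion $\Delta_{a,a+t}(x)+\Delta_{b,b+t}(y)=0$ and the same three-way split on $x$ ($x\le a$, $a<x<a+t$, $x\ge a+t$). The only difference is that each of your cases is stated as a two-sided equivalence, which takes the place of the paper's closing check by substitution that the three sets really fail to resolve the pair.
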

\begin{proof}
    We wish to find all $(x,y) \in V_{m,n}$ such that
    \begin{align*}
        d((x,y), (a,b)) = d((x,y), (a+t, b+t)) \iff \Delta_{a, a+t}(x) + \Delta_{b, b+t} (y) = 0.
    \end{align*}
    If $x \leq a$, then $\Delta_{a, a+t}(x) = -t$. Then, we must have $\Delta_{b, b+t}(y) = t \implies y \geq b + t$, so $(x,y) \in \mathcal{R}(0, a, b+t, n-1)$.

    If $a < x < a + t$, then $-t <\Delta_{a, a+t}(x) < t$. Then, we also need $-t < \Delta_{b, b+t}(y) < t$, so we must have $b < y < b+t$ and $2x -2a + 2y - 2b - 2t = 0 \implies x + y = a + b + t$. Thus, we must have
    \begin{equation*}
        (x,y) \in \{(a+i,b+t-i) \mid i \in \mathbb{Z}, 1 \leq i \leq t-1\}.
    \end{equation*}
    
    If $x \geq a + t$, then $\Delta_{a, a+t}(x) = t$, so $\Delta_{b,b+t}(y) = -t \implies y \leq b$, so $(x,y) \in \mathcal{R}(a+t, m-1, 0, b)$. 

    Therefore, for a vertex $(x,y) \in V_{m,n}$ to fail to resolve the pair of vertices $\{(a, b), (a+t, b+t)\}$, it must be in the set
    \begin{equation*}
        \mathcal{R}(0, a, b+t, n-1) \cup \{(a+i,b+t-i) \mid i \in \mathbb{Z}, 1 \leq i \leq t-1\} \cup \mathcal{R}(a+t, m-1, 0, b).
    \end{equation*}
    By substitution, we may check that all vertices in this set fail to resolve the pair of vertices $\{(a, b), (a+t, b+t)\}$, so the vertices that resolve the pair of vertices $\{(a,b), (a+t, b+t)\}$ are exactly the vertices in the set
    \begin{equation*}
        V_{m,n} \setminus (\mathcal{R}(0, a, b+t, n-1) \cup \{(a+i,b+t-i) \mid i \in \mathbb{Z}, 1 \leq i \leq t-1\} \cup \mathcal{R}(a+t, m-1, 0, b)).
    \end{equation*}
\end{proof}

\begin{lemma}\label{lem: char res vertices slope < |1|}
    Given a pair of vertices $\{(p-t, b), (q + t, b+ q - p)\}$ of the form $F_{<|1|}$ where $t \in \mathbb{Z}_{>0}$, $(p-t,b), (q+t, b+q-p) \in V_{m,n}$, and $q > p$, the set of all vertices that resolve the pair is the following set
    \begin{equation*}
        V_{m,n} \setminus \left(\mathcal{R}(p, p, b+q - p, n-1) \cup \{(p + i, b +q - p - i) \mid i \in \mathbb{Z},  1 \leq i \leq q - p-1 \} \cup \mathcal{R}(q, q, 0, b) \right).
    \end{equation*}
\end{lemma}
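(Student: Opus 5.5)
We follow the same strategy as in the proof of Lemma \ref{lem: char res vertices slope 1}. Write $\alpha_1 = p-t$, $\alpha_2 = q+t$, $\beta_1 = b$, $\beta_2 = b+q-p$, and set $s = q-p > 0$. Then $\alpha_1<\alpha_2$ and $\beta_1<\beta_2$, so the criterion derived before Lemma \ref{lem: char res vertices slope 1} applies. A vertex $(x,y)$ fails to resolve the pair if and only if
\begin{equation*}
    \Delta_{p-t,q+t}(x) + \Delta_{b,b+s}(y) = 0 .
\end{equation*}
The $y$-term takes values in $[-s,s]$, with $-s$ exactly when $y\le b$ and $s$ exactly when $y \ge b+s$. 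Between these it equals $2y-2b-s$. The $x$-term takes values in $[-(s+2t),\, s+2t]$. Since the $x$-term must equal minus the $y$-term, we need $|\Delta_{p-t,q+t}(x)| \le s$. Moreover, if $x\le p-t$ or $x\ge q+t$, then $|\Delta_{p-t,q+t}(x)| = s+2t > s$, so such $x$ never fail to resolve. We may therefore restrict to $p-t < x < q+t$, where the $x$-term equals $2x - p - q$.

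We then split into three cases on $y$.

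\emph{Case $y \ge b+s$.} We need $2x-p-q = -s = p-q$, which gives $x = p$. Since $t>0$, $x=p$ lies strictly inside $(p-t,q+t)$, so this case yields exactly $\mathcal{R}(p,p,b+q-p,n-1)$.

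\emph{Case $y\le b$.} We need $2x-p-q = s$, which gives $x=q$. This yields $\mathcal{R}(q,q,0,b)$.

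\emph{Case $b<y<b+s$.} The $y$-term lies strictly between $-s$ and $s$. The equation becomes $2x-p-q + 2y - 2b - s = 0$, that is, $x+y = p+b+s$. The bound $b<y<b+s$ translates to $p < x < q$, which automatically lies in $(p-t,q+t)$. Writing $x = p+i$ with $1\le i\le s-1$ gives $y = b+q-p-i$. This is exactly the middle set in the statement.

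Finally, the converse holds by direct substitution: each vertex in the three sets makes the sum vanish. The statement follows by taking the complement in $V_{m,n}$.

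The only delicate point is excluding $x\le p-t$ and $x\ge q+t$. This follows from the observation that, because $t>0$, the $x$-term there has magnitude $s+2t$, which strictly exceeds the maximum magnitude $s$ of the $y$-term. Everything else is routine case arithmetic. We also note that membership of the resulting vertices in $V_{m,n}$ is automatic, because $p,q\in[0,m-1]$ and the $y$-ranges are clipped to $[0,n-1]$ by the rectangle notation.
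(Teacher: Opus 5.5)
Your proof is correct and follows the paper's argument essentially step for step. It uses the same $\Delta$ criterion, excludes $x\le p-t$ and $x\ge q+t$ for the same reason (the $x$-term then has magnitude $q-p+2t$, which exceeds the maximum $q-p$ of the $y$-term), and makes the same three-way split on $y$ yielding $x=q$, $x+y=b+q$, and $x=p$, with the converse by direct substitution.
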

\begin{proof}
    We wish to find all $(x,y) \in V_{m,n}$ such that
    \begin{align*}
        d((x,y), (p-t,b)) = d((x,y), (q+t, b+q-p)) \iff \Delta_{p-t, q+t}(x) + \Delta_{b, b+q-p} (y) = 0.
    \end{align*}
    If $x \leq p -t$, then $\Delta_{p - t, q + t}(x) = p - q - 2t < p - q$. Since $\Delta_{b, b+q - p}(y) \leq q- p$, the equality $\Delta_{p-t, q+t}(x) + \Delta_{b, b+q-p} (y) = 0$ cannot hold.

    Likewise, if $x \geq q + t$, then $\Delta_{p -t, q + t}(x) = 2t + q - p > q - p$. Since $\Delta_{b, b+ q - p} (y) \geq p - q$, the equality $\Delta_{p - t, q + t} (x) + \Delta_{b, b+q - p} (y) = 0$ cannot hold either, so we omit both cases.

    If $p-t < x < q + t$, then $\Delta_{p-t, q + t}(x) = 2x - p -q$. Then, if $y \leq b$, we have $(2x - p - q) + (p-q) =2x - 2q = 0 \implies x = q$, so $(x,y) \in \mathcal{R}(q, q, 0, b)$.
    
    If $b < y < b + q - p$, then $(2x - p - q) + (2y - 2b +p - q) = 0 \implies x + y = b + q$, so $(x,y) \in \{(p + i, b +q - p - i) \mid i \in \mathbb{Z},  1 \leq i \leq q - p-1 \}$.
    
    If $y \geq b + q - p$, then $(2x - p - q) + (q - p) = 0 \implies x = p$, so $(x,y) \in \mathcal{R}(p, p, b+ q-p, n-1)$.
    
    We may check that all vertices in these three sets fail to resolve the pair of vertices $\{(p - t, b), (q+t, b + q - p)\}$, so the vertices that resolve the pair $\{(p - t, b), (q+t, b + q - p)\}$ are exactly the vertices in the set
    \begin{equation*}
        V_{m,n} \setminus (\mathcal{R}(p, p, b+q - p, n-1) \cup \{(p + i, b +q - p - i) \mid i \in \mathbb{Z},  1 \leq i \leq q - p-1 \} \cup \mathcal{R}(q, q, 0, b)).
    \end{equation*}
\end{proof}
 The following theorem follows from Lemmas \ref{lem: char res vertices slope 1} and \ref{lem: char res vertices slope < |1|} and allows us to eliminate the $F_{<|1|}$ pairs from further consideration. A closely related reduction is presented in Lemma $7$ of \cite{adarepstein2016alg}, in terms of joint diagonals. We present the following version for continuity of notation, as the same notation used in Lemmas \ref{lem: char res vertices slope 1} and \ref{lem: char res vertices slope < |1|} is used in Construction \ref{construction}.
 
\begin{theorem}\label{thm: eliminating F_{<|1|}}
    In the grid $P_m \square P_n$, given a set of vertices $S$ containing opposite-side boundary vertices $(q, 0)$ and $(p, n-1)$ where $0 \leq p < q \leq m-1$, if all pairs of vertices of the $F_{=1}$ form are resolved by $S$, then all pairs of vertices of the $F_{<|1|}$ form are also resolved by $S$. Consequently, $S$ is a resolving set of $P_m \square P_n$ if and only if it resolves all pairs of vertices of the $F_{=1}$ form determined by $p,q$ where $p<q$.
\end{theorem}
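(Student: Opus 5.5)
The plan is to compare non-resolving sets directly, using the explicit descriptions in Lemmas \ref{lem: char res vertices slope 1} and \ref{lem: char res vertices slope < |1|}. To each $F_{<|1|}$ pair I will assign a single $F_{=1}$ pair such that every vertex failing to resolve the $F_{<|1|}$ pair also fails to resolve the assigned $F_{=1}$ pair. Taking complements, every resolver of the $F_{=1}$ pair is then a resolver of the $F_{<|1|}$ pair. So if $S$ resolves all $F_{=1}$ pairs, it automatically resolves all $F_{<|1|}$ pairs.

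Concretely, fix an $F_{<|1|}$ pair $\{(p-t,b),(q+t,b+q-p)\}$ with $t>0$, $p-t,q+t\in[0,m-1]$ and $b,b+q-p\in[0,n-1]$. I associate to it the pair $\{(p,b),(q,b+q-p)\}$, obtained by sliding both endpoints horizontally onto the strip boundaries $x=p$ and $x=q$. This is an $F_{=1}$ pair with $a=p$ and $t'=q-p>0$, since $a,a+t'\in[p,q]$ and $b,b+t'\in[0,n-1]$ hold by hypothesis. Lemma \ref{lem: char res vertices slope 1} gives its non-resolving set as
\[
\mathcal{R}(0,p,b+q-p,n-1)\cup\{(p+i,b+q-p-i)\mid 1\le i\le q-p-1\}\cup\mathcal{R}(q,m-1,0,b).
\]
Lemma \ref{lem: char res vertices slope < |1|} gives the non-resolving set of the original pair as
\[
\mathcal{R}(p,p,b+q-p,n-1)\cup\{(p+i,b+q-p-i)\mid 1\le i\le q-p-1\}\cup\mathcal{R}(q,q,0,b).
\]
Comparing term by term, the column segment $\mathcal{R}(p,p,b+q-p,n-1)$ lies in $\mathcal{R}(0,p,b+q-p,n-1)$. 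The anti-diagonal pieces coincide. The segment $\mathcal{R}(q,q,0,b)$ lies in $\mathcal{R}(q,m-1,0,b)$. Hence the inclusion holds, and any $s\in S$ resolving the $F_{=1}$ pair resolves the $F_{<|1|}$ pair.

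For the ``consequently'' clause, Lemma \ref{lem: char nonres pairs} says every pair of distinct vertices not resolved by $(p,n-1),(q,0)\in S$ is of $F_{=1}$ or $F_{<|1|}$ form. Pairs resolved by those two vertices are resolved by $S$. By the first part, resolving the $F_{=1}$ pairs takes care of the $F_{<|1|}$ pairs. So $S$ is resolving if and only if it resolves all $F_{=1}$ pairs; the converse direction is trivial, since a resolving set resolves every pair.

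The only real obstacle is choosing the right companion pair, and checking that it is valid, i.e.\ genuinely of $F_{=1}$ form inside the grid. This needs $q>p$, and the vertical range $b,b+q-p\in[0,n-1]$ is inherited from the $F_{<|1|}$ pair. The remaining comparisons are immediate from the two lemmas.
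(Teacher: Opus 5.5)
Your proposal is correct and is essentially the paper's proof: the paper pairs $\{(p-t,b),(q+t,b+q-p)\}$ with the same $F_{=1}$ pair $\{(p,b),(q,b+q-p)\}$ and uses Lemmas \ref{lem: char res vertices slope 1} and \ref{lem: char res vertices slope < |1|} to get the same inclusion of resolver sets, merely phrasing the argument as a contrapositive. Your explicit check that the companion pair is a genuine $F_{=1}$ pair, and your use of Lemma \ref{lem: char nonres pairs} for the ``consequently'' clause, fill in details the paper leaves implicit.
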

\begin{proof}
    We prove the contrapositive. Suppose there is some pair of vertices of the $F_{<|1|}$ form, $\{(p-t, b), (q + t, b + q - p)\}$, that is not resolved by $S$. Then, $S$ cannot contain any vertices in the following set:
    \begin{equation*}
        V_{m,n} \setminus \left(\mathcal{R}(p, p, b+q - p, n-1)\cup \{(p + i, b + q - p - i) \mid 0 < i < q - p \} \cup \mathcal{R}(q, q, 0, b)\right).
    \end{equation*}
    We find that the set of all vertices that resolve the pair of vertices $\{(p, b), (q, b+q - p)\}$
    of the $F_{=1}$ form,
    \begin{equation*}
        V_{m,n} \setminus \left(\mathcal{R}(0, p, b+q - p, n-1)\cup \{(p + i, b + q - p - i) \mid 0 < i < q - p \} \cup \mathcal{R}(q, m-1, 0, b)\right),
    \end{equation*}
    is a subset of the set of all vertices that resolve the pair $\{(p-t, b), (q + t, b + q - p)\}$.
    
    Thus, no vertex in $S$ resolves the pair of vertices $\{(p, b), (q, b+q - p)\}$ of the $F_{=1}$ form.

    We have demonstrated that if there is some unresolved pair of the $F_{<|1|}$ form, then there is some unresolved pair of the $F_{=1}$ form. Therefore, if there is no unresolved pair of the $F_{=1}$ form, then there is no unresolved pair of the $F_{<|1|}$. Therefore, if $S$ resolves all pairs of the form $F_{=1}$, and therefore all pairs of the form $F_{<|1|}$, $S$ is a resolving set of $P_m \square P_n$.
\end{proof}

Together, we have characterized the pairs of vertices that are unresolved by $(q, 0)$ and $(p, n-1)$ and have also characterized the vertices in $P_m \square P_n$ that do successfully resolve the aforementioned pairs.

\section{Construction Characterizing All Minimal Resolving Sets} \label{section: Construction}

In this section, we give a recursive construction generating all minimal resolving sets of $P_m \square P_n$ of size at least $4$. We state the construction in one fixed orientation, with opposite-side boundary vertices $(p, n-1)$ and $(q, 0)$ where $q > p$. The remaining orientations are obtained by applying symmetries of the grid. 

Adar and Epstein \cite{adarepstein2016alg} showed that every minimal resolving set of cardinality at least $4$ can be ordered to form a sequence corresponding to a zigzag sequence. Our construction refines this structural description and presents an exact criterion for minimality.

We first state the recursive construction in full. Its definition uses the active-rectangle invariant proved immediately afterwards, while its starting and terminal restrictions forced by minimality are justified later in the section. The proof is based on tracking the pairs that remain unresolved as the construction adds vertices to create a minimal resolving set. By Theorem \ref{thm: eliminating F_{<|1|}}, once $Q_0$ is fixed, it is sufficient to consider the pairs of type $F_{=1}$ that remain unresolved. We show that at every non-terminal stage $i$, all unresolved $F_{=1}$ pairs are parameterized by an active rectangle 
\begin{equation*}
    Z_i = \mathcal{Z}(x_i^-, x_i^+, y_i^-, y_i^+)
\end{equation*}
Two extremal pairs in this rectangle, which we denote as $\zeta_i^u$ and $\zeta_i^b$, determine two regions $R_i^u$ and $R_i^b$. Any minimal resolving set extending the current prefix of the construction by resolving $\zeta_i^u$ and $\zeta_i^b$ must contain a vertex from each of the regions. Choosing an admissible vertex from each region updates the bounds of the active rectangle and either terminates the construction or moves the construction into its next stage $i+1$. Thus, the construction alternates between shrinking its horizontal and vertical bounds until the active rectangle is empty and all pairs of type $F_{=1}$ and by Theorem \ref{thm: eliminating F_{<|1|}} therefore all pairs of vertices are resolved. 

We then consider initial and terminal restrictions specified in the construction through casework, as not every sequence of such region choices yields a minimal resolving set. Lemmas \ref{lem: cutting of R_0}, \ref{lem: perpendicular pair restrictions}, \ref{lem: domination within R}, and \ref{lem: forcing termination to hold} show that minimality forces additional boundary, initial, and terminal restrictions, as stated in Construction \ref{construction}. 

Finally, after establishing the active-rectangle invariant and termination, we prove the two directions of the characterization. For soundness, we show that every output of the construction is resolving and that every selected vertex is the unique resolver of some pair of vertices, so no vertex may be removed without destroying resolvability. For completeness, we begin with an arbitrary minimal resolving set and inductively prove that the unresolved vertex pairs force it to contain exactly one vertex in each region $R_i^u$ and $R_i^b$ at every stage $i$ of the construction. Therefore, every minimal resolving set may be recovered by the construction.

\begin{construction} \label{construction}
    Begin with grid $P_m \square P_n$ with vertices labeled $(x,y)$ for $x \in [0,m-1]$ and $y \in [0, n-1]$. Choose two opposite boundary vertices $v_{-1} = (p, n-1), v_0 = (q, 0)$ with $p < q$ on the horizontal sides of the grid. By Proposition $1$ of \cite{andersen2016minimum}, a minimal resolving set of cardinality at least $3$ cannot contain more than one corner vertex, so at most one of $v_{-1}$ and $v_0$ may be a corner vertex. Define $Q_0 = \{v_{-1}, v_0\}$.

    For every $i \geq 0$ after $Q_i$ has been selected, define 
    \begin{equation*}
        \hat{\mathcal{Q}}_i := Q_0 \cup \dots \cup Q_i.
    \end{equation*}
    For every $i \geq 0$, define $Z_i$ to be the set of unordered pairs of vertices of the $F_{=1}$ form that are not resolved by $\hat{\mathcal{Q}}_i$. If $Z_i = \varnothing$, the construction terminates and outputs $\hat{\mathcal{Q}}_i$.

    As proved in Lemma \ref{lem: induction}, whenever $Z_i \neq \varnothing$, there exist unique integers $x_i^{-} < x_i^{+}$ and $ y_i^- < y_i^+$ such that 
    \begin{equation*}
        Z_i = \mathcal{Z}(x_i^-, x_i^+, y_i^-, y_i^+).
    \end{equation*}
    We call $(x_i^-, x_i^+, y_i^-, y_i^+)$ the active bounds at stage $i$. Define $\zeta_i^u$ and $\zeta_i^b$ as specific elements of $Z_i$:
    \begin{align*}
        \zeta_i^u = \{(x_i^{-}, y_i^{+}-1),(x_i^{-}+1, y_i^{+})\}
        \qquad
        \zeta_i^b = \{(x_i^{+}-1, y_i^{-}),(x_i^{+}, y_i^{-}+1)\}.
    \end{align*}
    Then, define $R_i^u$ and $R_i^b$ as the sets of vertices that resolve $\zeta_i^u$ and $\zeta_i^b$ respectively without resolving $\zeta_h^u$ or $\zeta_h^b$ for any $0 \leq h \leq i-1$. That is,
    \begin{align*}
        R_i^u= \Res(\zeta_i^u) \setminus \bigcup_{0 \leq h < i} \left(\Res(\zeta_h^u) \cup \Res(\zeta_h^b) \right)
        \qquad
        R_i^b= \Res(\zeta_i^b) \setminus \bigcup_{0 \leq h < i} \left(\Res(\zeta_h^u) \cup \Res(\zeta_h^b) \right).
    \end{align*}
    Lemma \ref{lem: induction} also proves that $R_i^u$ and $R_i^b$ are nonempty and disjoint at every non-terminal stage where $i>0$.
    Then, define $Q_{i+1} = \{v_{2i+1}, v_{2i+2}\}$,
    where, if $i$ is odd, choose $v_{2i+1} \in R_i^u$ and $v_{2i+2} \in R_i^b$. If $i$ is even, choose $v_{2i+1} \in R_i^b$ and $v_{2i+2} \in R_i^u$. For all $v_{\iota}$, we define $v_{\iota} = (x_{\iota}, y_{\iota})$. The vertex choices are subject to the following additional restrictions. For every $v_{\iota}$ where $\iota \geq 1$, we must have $y_{\iota} \in [1, n-2]$. This is our boundary restriction.

    We have the following restrictions on $Q_1 = \{v_1, v_2\}$. If neither $v_{-1}$ nor $v_0$ is a corner, then if $x_1 = m-1$ and $x_2=0$, we must have $y_1 > y_2$. If $v_0 \neq (m-1,0)$ and $v_{-1} = (0, n-1)$, we must have $x_1 \neq m-1$, and if $v_{-1} \neq (0, n-1)$ and $v_0 = (m-1,0)$, we must have $x_2 \neq 0$. These are our $Q_1$ restrictions.

    Finally, we have the following forced-termination restrictions to preserve minimality. If $i=0$ and $q - p = 1$, we must have $y_1 \geq y_2$. By Lemma \ref{lem: induction}, this implies $Z_1 = \varnothing$, at which point the construction terminates and outputs $\hat{\mathcal{Q}}_1$. If $i \geq 2$ is even, $Z_i$ is nonempty, and $x_i^{+} - x_i^{-} = 1$, we must have $y_{2i+1} \geq y_{2i+2}$. By Lemma \ref{lem: induction}, this implies $Z_{i+1} = \varnothing$, at which point the construction terminates and outputs $\hat{\mathcal{Q}}_{i+1}$. If $i \geq 1$ is odd, $Z_i$ is nonempty, and $y_i^{+} - y_i^{-} = 1$, we must have $x_{2i+1} \geq x_{2i+2}$. By Lemma \ref{lem: induction}, this implies $Z_{i+1} = \varnothing$, at which point the construction terminates and outputs $\hat{\mathcal{Q}}_{i+1}$.
    
    If $Z_{i+1} \neq \varnothing$, repeat the construction with $i+1$. At each stage, the construction branches over all possible choices of $v_{2i+1}$ and $v_{2i+2}$ satisfying the conditions above.
\end{construction}

\begin{figure}[hbt!]
  \centering
  \begin{subfigure}[t]{0.48\textwidth}
    \centering
    \begin{tikzpicture}[scale=0.49,font=\scriptsize]
      \integergrid{8}{8}
      \draw[activebounds] (1.78,-0.22) rectangle (6.22,8.22);
      \node[initialvertex,label={[font=\scriptsize]above left:$v_{-1}$}]
        at (2,8) {};
      \node[initialvertex,label={[font=\scriptsize]below right:$v_0$}]
        at (6,0) {};

      \draw[lowerorange,thick] (2,7)--(3,8);
      \node[zetapoint] at (2,7) {};
      \node[zetapoint] at (3,8) {};
      \node[lowerorange,anchor=east] at (1.75,7.05) {$\zeta_0^u$};
      \draw[lowerorange,thick] (5,0)--(6,1);
      \node[zetapoint] at (5,0) {};
      \node[zetapoint] at (6,1) {};
      \node[lowerorange,anchor=west] at (6.25,0.9) {$\zeta_0^b$};
      \node[activeviolet,fill=white,inner sep=1.5pt] at (4,4)
        {$Z_0=Z(2,6,0,8)$};
    \end{tikzpicture}
    \caption{Choose $Q_0=\{(2,8),(6,0)\}$ and identify the initial
    active bounds and extremal pairs.}
  \end{subfigure}\hfill
  \begin{subfigure}[t]{0.48\textwidth}
    \centering
    \begin{tikzpicture}[scale=0.49,font=\scriptsize]
      % R_0^u = R(0,2,0,7) union R(3,8,8,8)
      \filldraw[upperregion, opacity = 0.5] (-0.20,-0.20) rectangle (2.20,7.20);
      \filldraw[upperregion, opacity = 0.5] (2.80,7.80) rectangle (8.20,8.20);
      % R_0^b = R(0,5,0,0) union R(6,8,1,8)
      \filldraw[lowerregion, opacity = 0.5] (-0.20,-0.20) rectangle (5.20,0.20);
      \filldraw[lowerregion, opacity = 0.5] (5.80,0.80) rectangle (8.20,8.20);
      \integergrid{8}{8}
      \node[initialvertex] at (2,8) {};
      \node[initialvertex] at (6,0) {};
      \node[firstpair,label={[font=\scriptsize]right:$v_1=(7,3)$}]
        at (7,3) {};
      \node[firstpair,label={[font=\scriptsize]left:$v_2=(1,5)$}]
        at (1,5) {};
      \draw[activebounds] (1.78,2.78) rectangle (6.22,5.22);
      \node[activeviolet,fill=white,inner sep=1.5pt] at (4,4)
        {$Z_1=Z(2,6,3,5)$};
      \node[upperblue,anchor=west,fill=white,inner sep=1pt]
        at (0.15,7.45) {$R_0^u$};
      \node[lowerorange,anchor=east,fill=white,inner sep=1pt]
        at (7.85,0.55) {$R_0^b$};
    \end{tikzpicture}
    \caption{Choose $v_1\in R_0^b$ and $v_2\in R_0^u$; their
    vertical coordinates become the new active bounds.}
  \end{subfigure}

  \vspace{1.2em}

  \begin{subfigure}[t]{0.48\textwidth}
    \centering
    \begin{tikzpicture}[scale=0.49,font=\scriptsize]
      % R_1^u = R(3,5,5,7), R_1^b = R(3,5,1,3)
      \filldraw[upperregion, opacity = 0.5] (2.80,4.80) rectangle (5.20,7.20);
      \filldraw[lowerregion, opacity = 0.5] (2.80,0.80) rectangle (5.20,3.20);
      \integergrid{8}{8}
      \node[initialvertex] at (2,8) {};
      \node[initialvertex] at (6,0) {};
      \node[firstpair] at (7,3) {};
      \node[firstpair] at (1,5) {};
      \node[secondpair,label={[font=\scriptsize]left:$v_3=(3,6)$}]
        at (3,6) {};
      \node[secondpair,label={[font=\scriptsize]right:$v_4=(5,2)$}]
        at (5,2) {};
      \draw[activebounds] (2.78,2.78) rectangle (5.22,5.22);
      \node[activeviolet,fill=white,inner sep=1.5pt] at (4,4)
        {$Z_2=Z(3,5,3,5)$};
      \node[upperblue,fill=white,inner sep=1pt] at (4,7.45) {$R_1^u$};
      \node[lowerorange,fill=white,inner sep=1pt] at (4,0.55) {$R_1^b$};
    \end{tikzpicture}
    \caption{Choose $v_3\in R_1^u$ and $v_4\in R_1^b$; their
    horizontal coordinates become the new active bounds.}
  \end{subfigure}\hfill
  \begin{subfigure}[t]{0.48\textwidth}
    \centering
    \begin{tikzpicture}[scale=0.49,font=\scriptsize]
      \filldraw[upperregion, opacity = 0.5] (2.75,3.75) rectangle (3.25,4.25);
      \filldraw[lowerregion, opacity = 0.5] (4.75,3.75) rectangle (5.25,4.25);
      \integergrid{8}{8}
      \node[initialvertex] at (2,8) {};
      \node[initialvertex] at (6,0) {};
      \node[firstpair] at (7,3) {};
      \node[firstpair] at (1,5) {};
      \node[secondpair] at (3,6) {};
      \node[secondpair] at (5,2) {};
      \draw[activebounds] (2.78,4) rectangle (5.22,4);
      \node[thirdpair,label={[font=\scriptsize]right:$v_5=(5,4)$}]
        at (5,4) {};
      \node[thirdpair,label={[font=\scriptsize]left:$v_6=(3,4)$}]
        at (3,4) {};
      \node[, activeviolet, fill=white,inner sep=1.5pt,align=center] at (4.15,5.10)
        {$Z_3=Z(3,5,4,4)=\varnothing$};
    \end{tikzpicture}
    \caption{The final pair has the opposite weak ordering, so the active
    set becomes empty and the construction terminates.}
  \end{subfigure}
  \caption{A complete admissible run of Construction 4.1 on $P_9 \square P_9$.  Black vertices form $Q_0$; green, blue, and red vertices form $Q_1,Q_2,Q_3$, respectively.  A dashed rectangle marks the coordinate bounds parameterizing $Z_i$, which is empty at the terminal stage. The $R_i^u$ and $R_i^b$ regions are shaded for each stage and are undefined at the terminal stage.} 
  \label{fig:construction-run}
\end{figure}

The following lemma concerns how $Z_i$ changes as pairs of vertices are added to $\hat{\mathcal{Q}}_i$.
\begin{lemma} \label{lem: active-rectangle-update}
    Suppose that for some $i \geq 0$, we have that $Z_i = \mathcal{Z}(x^-, x^+, y^-, y^+)$, where $x^-$, $x^+$, $y^-$, and $y^+$ are integers where $0 \leq x^- < x^+ \leq m-1$ and $0 \leq y^- < y^+ \leq n-1$. Then, suppose that the vertex $v$ is added to the set of vertices $\hat{\mathcal{Q}}_i$. Let $v = (x_v, y_v) \in V_{m,n}$. Let $Z_{i}(v)$ be the set of all pairs of vertices of the $F_{=1}$ form unresolved by $\hat{\mathcal{Q}}_i \cup \{v\}$. In each of the following cases, the new bounds for the unresolved pairs of vertices are as stated.
    
    If $x_v \in (x^-, x^+)$ and $y_v \geq y^+$, the $x$-bounds for $Z_{i}(v)$ become $[x_v, x^+]$ and the $y$-bounds do not change.

    If $x_v \in (x^-, x^+)$ and $y_v \leq y^-$, the $x$-bounds for $Z_{i}(v)$ become $[x^-, x_v]$ and the $y$-bounds do not change.

    If $x_v \leq x^-$ and $y_v \in (y^-, y^+)$, the $x$-bounds for $Z_{i}(v)$ do not change and the $y$-bounds become $[y^-, y_v]$.

    If $x_v \geq x^+$ and $y_v \in (y^-, y^+)$, the $x$-bounds for $Z_{i}(v)$ do not change and the $y$-bounds become $[y_v,y^+]$.

    If $x_v\leq x^-$ and $y_v\leq y^-$, or if $x_v\geq x^+$ and $y_v\geq y^+$, then $Z_{i}(v)=\varnothing$.
\end{lemma}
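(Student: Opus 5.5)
The plan is to observe that $Z_i(v) = \{P \in Z_i \mid v \notin \Res(P)\}$. Indeed, a pair of $F_{=1}$ form is unresolved by $\hat{\mathcal{Q}}_i \cup \{v\}$ exactly when it is unresolved by $\hat{\mathcal{Q}}_i$, so lies in $Z_i = \mathcal{Z}(x^-,x^+,y^-,y^+)$, and is also not resolved by $v$. It therefore suffices to determine, for a fixed $v$, which pairs $\{(a,b),(a+t,b+t)\}$ with $a,a+t\in[x^-,x^+]$ and $b,b+t\in[y^-,y^+]$ fail to be resolved by $v$. Lemma \ref{lem: char res vertices slope 1} answers this: $v$ fails to resolve the pair if and only if $v$ lies in one of three sets. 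The first is $\mathcal{R}(0,a,b+t,n-1)$, meaning $x_v\le a$ and $y_v\ge b+t$. The second is the open antidiagonal $x_v+y_v=a+b+t$ with $a<x_v<a+t$. The third is $\mathcal{R}(a+t,m-1,0,b)$, meaning $x_v\ge a+t$ and $y_v\le b$. Each case of the statement is then a matter of intersecting these conditions with the box constraints.

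\textbf{Case $x_v\in(x^-,x^+)$, $y_v\ge y^+$.} Since $b<b+t\le y^+\le y_v$, we have $y_v > b$, so the third set and the antidiagonal are impossible; the antidiagonal would require $b<y_v<b+t$. Only the first set remains, which requires $x_v\le a$ and $b+t\le y_v$. The second condition holds automatically. Hence the surviving pairs are exactly those with $a,a+t\in[x_v,x^+]$ and $b,b+t\in[y^-,y^+]$, that is, $\mathcal{Z}(x_v,x^+,y^-,y^+)$. If $x_v=x^+-1$ this set may be nonempty, and if $x_v$ were larger it would be empty; the statement only asserts the new bounds, and this matches the convention that $\mathcal{Z}$ may be empty.

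\textbf{Remaining cases.} The case $x_v\in(x^-,x^+)$, $y_v\le y^-$ is symmetric: only the third set is available, forcing $a+t\le x_v$, which gives $\mathcal{Z}(x^-,x_v,y^-,y^+)$. For $x_v\le x^-$ with $y_v\in(y^-,y^+)$, the inequality $x_v\le x^-\le a$ excludes both the antidiagonal and the third set. The first set then requires $b+t\le y_v$, with $x_v\le a$ automatic, giving $\mathcal{Z}(x^-,x^+,y^-,y_v)$. The case $x_v\ge x^+$ is symmetric and gives $\mathcal{Z}(x^-,x^+,y_v,y^+)$. For the corner cases, take $x_v\le x^-$ and $y_v\le y^-$. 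The first set would need $y_v\ge b+t>b\ge y^-$, the third would need $x_v\ge a+t>x^-$, and the antidiagonal would need $x_v>a\ge x^-$. All three fail, so $v$ resolves every pair of $Z_i$ and $Z_i(v)=\varnothing$. The opposite corner is handled the same way.

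\textbf{Main difficulty.} The computations are routine. The only point requiring care is the antidiagonal set, which must be excluded in every case by the strict inequalities $a<x_v<a+t$ and $b<y_v<b+t$ together with the box bounds. One must also confirm that, in each case, the condition that survives automatically keeps the box in the untouched coordinate, so that the result is again a full $\mathcal{Z}$-rectangle.
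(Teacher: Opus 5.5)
Your proof is correct and follows the paper's argument. Both intersect $Z_i$ with the set of slope-$1$ pairs left unresolved by $v$ from Lemma \ref{lem: char res vertices slope 1}, and in each case use the box bounds to discard the antidiagonal set and one of the two rectangular sets; the only cosmetic difference is that the paper checks the two corner cases through the identity $d(v,(a+t,b+t))-d(v,(a,b))=\pm 2t$. One small correction to your aside in the first case: since $x_v<x^+$ and $y^-<y^+$, the set $\mathcal{Z}(x_v,x^+,y^-,y^+)$ is always nonempty there, so the remark about it possibly being empty is unnecessary.
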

\begin{proof}
    Suppose that for some $i \geq 0$, we have $Z_i = \mathcal{Z}(x^-, x^+, y^-, y^+)$ where $x^-$, $x^+$, $y^-$, and $y^+$ are integers where $x^+>x^-$ and $y^+ > y^-$. Then, suppose that the vertex $v$ is added to the set of vertices $\hat{\mathcal{Q}}_i$.

    By Lemma \ref{lem: char res vertices slope 1}, the set of all pairs of vertices of the $F_{=1}$ form that $v$ does not resolve is
    \begin{align*}
        \mathcal{Z}(0, x_v, y_v, n-1) \cup \mathcal{Z}(x_v, m-1, 0, y_v) \cup \\\left\{\{(x_v-\alpha,y_v-\beta),(x_v+\beta,y_v+\alpha)\} \mid \alpha, \beta \in \mathbb{Z}_{>0}; x_v-\alpha,x_v+\beta \in[0,m-1];y_v-\beta,y_v+\alpha \in [0,n-1] \right\}.
    \end{align*}
    $Z_i$ is the set of all pairs of vertices of the $F_{=1}$ form that $\hat{\mathcal{Q}}_i$ does not resolve, so the set of all pairs of vertices of the $F_{=1}$ form that $\hat{\mathcal{Q}}_i \cup \{v\}$ does not resolve, $Z_i(v)$, is the intersection
    \begin{align*}
        \mathcal{Z}(x^-, x^+, y^-, y^+) \cap ( \mathcal{Z}(0, x_v, y_v, n-1) \cup \mathcal{Z}(x_v, m-1, 0, y_v) \\ 
        \cup \{ \{(x_v-\alpha,y_v-\beta),(x_v+\beta,y_v+\alpha)\} \mid \alpha, \beta \in \mathbb{Z}_{>0}; x_v-\alpha,x_v+ \beta \in[0,m-1];y_v-\beta,y_v+\alpha \in [0,n-1] \} )
    \end{align*}
    If $x_v \in (x^-, x^+)$ and $y_v \geq y^+$, then $b+t > y_v \geq y^+$ for every pair of vertices in the first set in the union, so this set is disjoint from $Z_i$ and may be discarded. Likewise, $y_v+\alpha > y^+$ for every pair of vertices in the third set in the union, so this set is disjoint from $Z_i$ and may be discarded. Therefore, this intersection is equal to $\mathcal{Z}(x_v,x^+, y^-, y^+)$, as desired.

    The other three non-terminal cases for $x_v$ and $y_v$ are proved similarly using Lemma \ref{lem: char res vertices slope 1} by taking the intersection between the set of pairs of vertices unresolved by $v$ and $Z_{i}$. The third set is always disjoint from $Z_i$, as the following inequalities hold:
    \begin{table}[hbt!]
    \centering
    \label{tab:my_table}
        \begin{tabular}{|l|c|}
            \hline
            Location of $v$ & Endpoint forced out of $Z_i$ bounds  \\ \hline
            $y_v \leq y^-$ & $y_v-\beta < y^-$ \\ \hline
            $x_v \leq x^-$ & $x_v-\alpha< x^-$ \\ \hline
            $x_v \geq x^+$ & $x_v+\beta > x^+$ \\ \hline
        \end{tabular}
    \end{table}

    Thus, we have
    \begin{align*}
        x_v \in (x^-, x^+), y_v \leq y^- \implies Z_i(v) = \mathcal{Z}(x^-, x_v, y^-, y^+)\\
        x_v \leq x^-, y_v \in (y^-, y^+) \implies Z_i(v) = \mathcal{Z}(x^-, x^+, y^-, y_v)\\
        x_v \geq x^+, y_v \in (y^-, y^+) \implies Z_i(v) = \mathcal{Z}(x^-, x^+, y_v, y^+).
    \end{align*}
    
    It remains to consider the terminal case. Suppose first that $x_v\leq x^-$ and $y_v\leq y^-$. For every pair $\{(a,b),(a+t,b+t)\}\in Z_i$, we have $x_v\leq x^-\leq a<a+t$ and $y_v\leq y^-\leq b<b+t$. Thus,
    \begin{equation*}
        d\bigl(v,(a+t,b+t)\bigr)
        =(a+t-x_v)+(b+t-y_v)\
        =(a-x_v)+(b-y_v)+2t\
        =d\bigl(v,(a,b)\bigr)+2t.
    \end{equation*}
    Since $t>0$, the vertex $v$ resolves every pair in $Z_i$. Therefore, $Z_{i}(v)=\varnothing$. The second terminal case is treated similarly, as we find $d(v, (a,b)) = d(v, (a+t, b+t)) + 2t$, which implies that $v$ resolves every pair in $Z_i$ and so $Z_i (v) = \varnothing$.
\end{proof}

For conciseness, define
\begin{equation*}
    U_i := \bigcup_{0 \leq h < i}(\Res(\zeta_h^u) \cup \Res(\zeta_h^b)), \qquad U_0 = \varnothing.
\end{equation*}

\begin{lemma} \label{lem: induction}
    Let $Q_0, Q_1, ..., Q_i$ with $Q_0 = \{v_{-1}, v_0\} = \{(p, n-1),(q, 0)\}$ be a labeled sequence satisfying the recursive region-selection and boundary rules of Construction \ref{construction} through stage $i$, without necessarily satisfying its $Q_1$ or terminal restrictions. Then, if stage $i$ is non-terminal, $Z_i = \mathcal{Z}(x_i^-, x_i^+, y_i^-, y_i^+)$, with the initial bounds $(x_0^-, x_0^+, y_0^-, y_0^+) =(p, q, 0, n-1)$. 
    
    If $i$ is odd, then $Z_{i+1} = \mathcal{Z}(x_{2i+1}, x_{2i+2}, y_i^-, y_i^+)$, so when $Z_{i+1} \neq \varnothing$, we have $(x_{i+1}^-, x_{i+1}^+, y_{i+1}^-, y_{i+1}^+) = (x_{2i+1}, x_{2i+2}, y_i^-, y_i^+)$. 
    
    If $i$ is even, then $Z_{i+1} = \mathcal{Z}(x_i^-, x_i^+, y_{2i+1}, y_{2i+2})$, so when $Z_{i+1} \neq \varnothing$, we have $(x_{i+1}^-, x_{i+1}^+, y_{i+1}^-, y_{i+1}^+) = (x_i^-, x_i^+, y_{2i+1}, y_{2i+2})$.

    The initial regions are
    \begin{equation*}
        R_0^u = \mathcal{R}(0, p, 0, n-2) \cup \mathcal{R}(p+1, m-1, n-1, n-1),
        \qquad
        R_0^b = \mathcal{R}(0, q-1, 0, 0) \cup \mathcal{R}(q, m-1, 1, n-1).
    \end{equation*}
    For every odd $i \geq 1$ with $Z_i \neq \varnothing$,
    \begin{equation*}
        R_i^u = \mathcal{R}(x_i^- + 1, x_i^+ -1, y_i^+, y_{i-1}^+ -1), \qquad R_i^b = \mathcal{R}(x_i^- + 1, x_i^+ -1, y_{i-1}^- + 1, y_i^-).
    \end{equation*}
    For every even $i \geq 2$ with $Z_i \neq \varnothing$,
    \begin{equation*}
        R_i^u = \mathcal{R}(x_{i-1}^- + 1, x_i^-, y_i^- + 1, y_i^+ - 1), \qquad R_i^b = \mathcal{R}(x_i^+, x_{i-1}^+ - 1, y_i^- + 1, y_i^+ -1).
    \end{equation*}
    Finally, for all $i$, we have that
    \begin{equation*}
        \bigcup_{0 \leq h < i} (R_h^u \cup R_h^b) = U_i.
    \end{equation*}
    If, in addition, the sequence is a prefix admitted by Construction~\ref{construction}, then at every non-terminal stage $R_i^u$ and $R_i^b$ are nonempty. They are disjoint for $i>0$, and one may choose one vertex from each region satisfying every applicable restriction of the construction.
\end{lemma}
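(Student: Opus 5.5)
We argue by induction on $i$, carrying as a joint invariant the shape of $Z_i$, the explicit form of $R_i^u$ and $R_i^b$, and the identity $\bigcup_{h<i}(R_h^u\cup R_h^b)=U_i$.

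\textbf{Base case.} By Lemma \ref{lem: char nonres pairs}, the $F_{=1}$ pairs left unresolved by $Q_0$ are exactly $\mathcal{Z}(p,q,0,n-1)$. This gives the initial bounds $(p,q,0,n-1)$.

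\textbf{Initial regions.} For $R_0^u$ and $R_0^b$ we have $U_0=\varnothing$, so $R_0^u=\Res(\zeta_0^u)$ and $R_0^b=\Res(\zeta_0^b)$. Both pairs have $t=1$, so Lemma \ref{lem: char res vertices slope 1} has no middle antidiagonal term, and the non-resolvers of $\zeta_0^u=\{(p,n-2),(p+1,n-1)\}$ are $\mathcal{R}(0,p,n-1,n-1)\cup\mathcal{R}(p+1,m-1,0,n-2)$. Complementing gives the stated $R_0^u$, and $R_0^b$ is symmetric.

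\textbf{Inductive step for $Z_{i+1}$.} Assume $Z_i=\mathcal{Z}(x_i^-,x_i^+,y_i^-,y_i^+)$ and that $R_i^u,R_i^b$ have the claimed form. First locate each selected vertex relative to the active bounds, then apply Lemma \ref{lem: active-rectangle-update} twice, once per vertex.
\begin{itemize}
\item \emph{$i$ even.} A vertex of $R_i^b$ has $x\ge x_i^+$ and $y\in(y_i^-,y_i^+)$ for $i\ge2$. For $i=0$ it lies in $\mathcal{R}(q,m-1,1,n-1)$, or else in the bottom row; the boundary restriction $y\in[1,n-2]$ rules out the bottom row and gives $y<n-1$. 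So $v_{2i+1}$ raises the lower $y$-bound to $y_{2i+1}$. Symmetrically, $v_{2i+2}\in R_i^u$ has $x\le x_i^-$, so it lowers the upper $y$-bound to $y_{2i+2}$.
\item \emph{Order of the two updates.} After the first update the second vertex may no longer satisfy the strict inequality $y\in(y^-,y^+)$ for the new bounds. In that case the new rectangle $\mathcal{Z}(\cdot,\cdot,y_{2i+1},y_{2i+2})$ is empty because $y_{2i+1}\ge y_{2i+2}$, and this is consistent with the statement, since $\mathcal{Z}$ is empty when $C\ge D$. I would check that case directly from Lemma \ref{lem: char res vertices slope 1} rather than through the update lemma.
\item \emph{$i$ odd.} Here $R_i^u$ lies strictly inside the $x$-range and above $y_i^+$, so Lemma \ref{lem: active-rectangle-update} moves the lower $x$-bound to $x_{2i+1}$. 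Likewise $R_i^b$ moves the upper $x$-bound to $x_{2i+2}$.
\end{itemize}

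\textbf{Computing the next regions.} Given $Z_{i+1}$, compute $\Res(\zeta_{i+1}^{u})$ and $\Res(\zeta_{i+1}^b)$ via Lemma \ref{lem: char res vertices slope 1}; again $t=1$, so each set is the complement of two quadrant rectangles. Then subtract $U_{i+1}=U_i\cup\Res(\zeta_i^u)\cup\Res(\zeta_i^b)$. The key observation is that the complement of $U_{i+1}$, intersected with the resolvers of the new extremal pairs, is cut out by the previous bounds: for $i+1$ even, the vertices resolving neither $\zeta_i^u$ nor $\zeta_i^b$ have $x$ strictly between $x_i^-$ and $x_i^+$. This yields the formulas with $x_{i-1}^\pm$ and $y_{i-1}^\pm$ shifted by one. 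It is a routine but fiddly rectangle intersection, which I would organise by tracking the complement of $U_i$ as a shrinking "cross" shape. Proving this shape explicitly as part of the invariant is the main obstacle; the cleanest route is probably to show that the complement of $U_i$ equals the intersection of the non-resolver sets of all earlier $\zeta$'s, which is an explicit union of rectangles.

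\textbf{The union identity.} The identity $\bigcup_{h<i}(R_h^u\cup R_h^b)=U_i$ is immediate by induction, since $R_h^u\cup R_h^b=(\Res(\zeta_h^u)\cup\Res(\zeta_h^b))\setminus U_h$.

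\textbf{Nonemptiness, disjointness, and admissible choices.} For $i>0$, disjointness follows from the explicit formulas: for odd $i$ the $y$-ranges of $R_i^u$ and $R_i^b$ are separated by $y_i^-<y_i^+$, and for even $i$ the $x$-ranges are separated. Nonemptiness uses the non-terminal hypothesis. For odd $i$, $x_i^+-x_i^-\ge2$ is needed for the interior $x$-range; if the difference were $1$, the forced-termination rule at the previous step would have made the stage terminal. Similarly, $y_i^+\le y_{i-1}^+-1$ follows from the strict choice inside the previous region.

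Finally, one must check that each admissible prefix leaves a choice satisfying the boundary, $Q_1$, and termination restrictions. Take the vertices on the boundary of the regions nearest the active rectangle; when a termination restriction applies, pick the same row (resp.\ column), which meets the weak-inequality requirement. For $Q_1$ near corners, use the portion of $R_0$ with $x<m-1$ or $x>0$, which is nonempty since $m\ge3$.
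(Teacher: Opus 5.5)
Your outline follows the paper's route, and those parts are fine: induction carrying $Z_i$ and the region formulas together, Lemma~\ref{lem: active-rectangle-update} applied once per vertex, Lemma~\ref{lem: char res vertices slope 1} with $t=1$, the telescoping identity for $U_i$, and nonemptiness/disjointness from the forced-termination rules. Your ``order of updates'' worry needs no separate check. If $y_{2i+2}\le y_{2i+1}$, the second vertex satisfies $x_v\le x^-$ and $y_v\le y^-$ for the updated bounds, which is the terminal case of Lemma~\ref{lem: active-rectangle-update}.

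\textbf{The gap.} The step that carries the real content of the lemma, the explicit formulas for $R_{i+1}^u$ and $R_{i+1}^b$, is deferred as ``routine but fiddly'' and the ``main obstacle.'' Moreover, the concrete fact you give after the colon is false as stated. The vertices resolving neither $\zeta_i^u$ nor $\zeta_i^b$ are
\[
\mathcal{R}(0,x_i^-,y_i^+,n-1)\cup\mathcal{R}(x_i^-+1,x_i^+-1,y_i^-+1,y_i^+-1)\cup\mathcal{R}(x_i^+,m-1,0,y_i^-).
\]
For example, $(0,n-1)$ is always such a vertex, and it does not have $x$ strictly between $x_i^-$ and $x_i^+$. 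The two outer pieces disappear only after intersecting with $\Res(\zeta_{i+1}^\sigma)$. For odd $i$,
\[
\Res(\zeta_{i+1}^u)=\mathcal{R}(0,x_{2i+1},0,y_i^+-1)\cup\mathcal{R}(x_{2i+1}+1,m-1,y_i^+,n-1)
\]
misses both outer pieces because $x_i^-<x_{2i+1}<x_i^+$ and $y_i^-<y_i^+$. This leaves $\mathcal{R}(x_i^-+1,x_{2i+1},y_i^-+1,y_i^+-1)$. You must still show that this central piece is disjoint from $U_i$, and that is precisely the point you leave unproved.

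\textbf{How to close it.} The paper does this by observing that every earlier region $R_h$, $h<i$, lies outside the open interior $\mathcal{R}(x_i^-+1,x_i^+-1,y_i^-+1,y_i^+-1)$. This follows from the explicit formulas for $R_h$ and the nesting $x_h^-\le x_i^-$, $x_i^+\le x_h^+$, $y_h^-\le y_i^-$, $y_i^+\le y_h^+$.

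Alternatively, your complement-tracking idea works once the shape is stated correctly; it is not a cross. For $i\ge1$ the invariant is
\[
V_{m,n}\setminus U_i=\mathcal{R}(0,p,n-1,n-1)\cup\mathcal{R}(q,m-1,0,0)\cup\mathcal{R}(x_{i-1}^-+1,x_{i-1}^+-1,y_{i-1}^-+1,y_{i-1}^+-1).
\]
It propagates by intersecting with the three-piece set above. The two boundary segments lie inside the outer pieces, and the old interior meets only the new interior. Since $\Res(\zeta_i^\sigma)$ misses both boundary segments for $i\ge1$, you get $R_i^\sigma=\Res(\zeta_i^\sigma)\cap\mathcal{R}(x_{i-1}^-+1,x_{i-1}^+-1,y_{i-1}^-+1,y_{i-1}^+-1)$. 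A single two-piece intersection then gives each stated formula. This would be tidier bookkeeping than the paper's, but it has to be proved, not just proposed.

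\textbf{A smaller point.} When $p=0$, nonemptiness of the admissible part of $R_0^b$ with $x_1\neq m-1$ comes from $q\ne m-1$ (at most one corner in $Q_0$), not from $m\ge3$. In any case, the choice $x_1=q$, $x_2=p$ already satisfies all the $Q_1$ restrictions.
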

\begin{proof}
    By definition, $R_i^\sigma = \Res(\zeta_i^\sigma)\setminus U_i$ for $\sigma \in \{u, b\}$. Thus, we have
    \begin{equation*}
        U_i\cup R_i^u\cup R_i^b = U_i \cup\Res(\zeta_i^u) \cup\Res(\zeta_i^b) = U_{i+1}.
    \end{equation*}
    By induction, we therefore have
    \begin{equation*}
        U_i = \bigcup_{0\leq h<i}(R_h^u\cup R_h^b),
    \end{equation*}
    as desired.

    $Z_0$ is the set of unordered pairs of vertices of the $F_{=1}$ form that are not resolved by $Q_0 = \{v_{-1}, v_0\}$. Since $v_{-1} = (p, n-1)$ and $v_0 = (q,0)$, by Lemma \ref{lem: char nonres pairs}, we have $Z_0 = \mathcal{Z}(p, q, 0, n-1)$, as desired. Thus, by the construction, we have that
    \begin{align*}
        \zeta_0^u = \{(p,n-2),(p+1,n-1)\} \qquad \zeta_0^b = \{(q-1,0),(q,1)\}.
    \end{align*}
    Then, $R_0^u$ and $R_0^b$ are the sets of vertices in $P_m \square P_n$ that resolve $\zeta_0^u$ and $\zeta_0^b$ respectively, so
    \begin{equation*}
        R_0^u \cup R_0^b = \Res(\zeta_0^u) \cup \Res(\zeta_0^b) = U_1,
    \end{equation*}
    as desired. By Lemma \ref{lem: char res vertices slope 1}, the set of all vertices that resolve $\zeta_0^u$ and the set of all vertices that resolve $\zeta_0^b$ are
    \begin{align*}
        \mathcal{R}(0, p, 0, n-2) \cup \mathcal{R}(p+1, m-1, n-1, n-1), \qquad \mathcal{R}(0, q-1, 0,0) \cup \mathcal{R}(q, m-1, 1, n-1),
    \end{align*}
    respectively, as desired. 
    
    Now, for the second base case, applying Lemma \ref{lem: active-rectangle-update} to $v_1$ and $v_2$, we find that $Z_1 = \mathcal{Z}(p, q, y_1, y_2)$. If $y_1 \geq y_2$, then $Z_1 = \varnothing$ and our construction terminates. Thus, assume $y_1 < y_2$.
    
    Therefore, as defined by Construction \ref{construction}, we have
    \begin{align*}
        \zeta_1^u = \{(p,y_{2}-1),(p+1,y_{2})\} \qquad \zeta_1^b = \{(q-1,y_1),(q,y_1+1)\}.
    \end{align*}
    Then, $R_1^u$ and $R_1^b$ are the sets of vertices that resolve $\zeta_1^u = \{(p,y_2-1),(p+1, y_2) \}$ and $\zeta_1^b = \{(q-1, y_1),(q, y_1+1)\}$ respectively without resolving either $\zeta_0^u$ or $\zeta_0^b$. By Lemma \ref{lem: char res vertices slope 1}, the set of vertices that resolve $\zeta_1^u$ is
    \begin{align*}
        V_{m,n} \setminus (\mathcal{R}(0, p, y_2, n-1) \cup \mathcal{R}(p+1, m-1, 0, y_2 - 1)) = \mathcal{R}(0, p, 0, y_2 - 1) \cup \mathcal{R}(p+1, m-1, y_2, n-1).
    \end{align*}
    To obtain $R_1^u$, we subtract $R_0^u$ and $R_0^b$. Since $v_2 \in R_0^u$, we have that $0 \leq x_2 \leq p$ and $1 \leq y_2 \leq n-2$, so we have $R_1^u = \mathcal{R}(p+1, q-1, y_2, n-2)$, as desired. Likewise, by Lemma \ref{lem: char res vertices slope 1}, the set of vertices that resolve $\zeta_1^b$ is
    \begin{align*}
        V_{m,n} \setminus (\mathcal{R}(0, q-1, y_1 + 1, n-1) \cup \mathcal{R}(q, m-1, 0, y_1)) = \mathcal{R}(0, q-1, 0, y_1) \cup \mathcal{R}(q, m-1, y_1 + 1, n-1).
    \end{align*}
    To obtain $R_1^b$, we subtract $R_0^u$ and $R_0^b$. Since $v_1 \in R_0^b$, we have that $q \leq x_1 \leq m-1$ and $1 \leq y_1 \leq n-2$, so we have $R_1^b = \mathcal{R}(p + 1, q-1, 1, y_1)$, as desired.
        
    Now, suppose all assertions hold at some non-terminal stage $i \geq 1$. If $i$ is odd, we have $Q_{i+1} = \{v_{2i+1}, v_{2i+2}\}$ with $v_{2i+1} \in R_i^u$ and $v_{2i+2} \in R_i^b$.
    
    Then, consider $Q_{i+1} = \{v_{2i+1}, v_{2i+2}\}$. Since $v_{2i+1} \in R_i^u$ and $v_{2i+2} \in R_i^b$, we have that
    \begin{align*}
        x_{2i+1} \in [x_i^- + 1, x_i^+ -1], \qquad
        y_{2i+1} \in [y_i^+, y_{i-1}^+ -1],
        \qquad
        x_{2i+2} \in [x_i^- +1, x_i^+ -1],
        \qquad
        y_{2i+2} \in [y_{i-1}^- +1, y_i^-].
    \end{align*}
    Applying Lemma \ref{lem: active-rectangle-update} first to $v_{2i+1}$ and then to $v_{2i+2}$, we obtain $Z_{i+1} = \mathcal{Z}(x_{2i+1}, x_{2i+2}, y_i^-, y_i^+)$, as desired.

    If $x_{2i+1} \geq x_{2i+2}$, then the second vertex falls into a terminal case of Lemma \ref{lem: active-rectangle-update}, and both sides of the equation are empty. Hence, for the remainder of the calculation at stage $i+1$, suppose that $Z_{i+1}\neq\varnothing$. Equivalently, $x_{2i+1}<x_{2i+2}$, and the two vertices replace both horizontal bounds. The distinguished pairs and regions at stage $i+1$ are therefore defined.

    Then, as defined by Construction \ref{construction}, $R_{i+1}^u$ and $R_{i+1}^b$ are the sets of vertices that resolve the distinguished pairs
    \begin{align*}
        \zeta_{i+1}^u = \{(x_{2i+1}, y_i^+ -1),(x_{2i+1}+1, y_i^+) \},
        \qquad
        \zeta_{i+1}^b =\{(x_{2i+2}-1, y_i^-),(x_{2i+2}, y_i^- +1)\}
    \end{align*}
    respectively without resolving any of $\zeta_h^u, \zeta_h^b$ for $0 \leq h \leq i$. Thus, we have
    \begin{align*}
        R_{i+1}^u = \Res(\zeta_{i+1}^u) \setminus U_{i+1}
        = \Res(\zeta_{i+1}^u) \setminus \bigcup_{0 \leq h < i+1}(R_h^u \cup R_h^b)
        \\
        R_{i+1}^b = \Res(\zeta_{i+1}^b) \setminus U_{i+1}
        = \Res(\zeta_{i+1}^b) \setminus \bigcup_{0 \leq h < i+1}(R_h^u \cup R_h^b).
    \end{align*}
    We find from Lemma \ref{lem: char res vertices slope 1} and from our induction hypothesis that
    \begin{align*}
        R_{i+1}^u= \left(\mathcal{R}(0, x_{i+1}^-, 0, y_{i+1}^+ - 1) \cup \mathcal{R}(x_{i+1}^- + 1, m-1, y_{i+1}^+, n-1) \right)\setminus \bigcup_{0 \leq h < i+1} (R_h^u \cup R_h^b)\\
        = 
        \left( \mathcal{R}(0, x_{i+1}^-, 0, y_{i+1}^+ - 1) \cup \mathcal{R}(x_{i+1}^- + 1, m-1, y_{i+1}^+, n-1) \right) \setminus \left( U_i \cup \Res(\zeta_i^u) \cup \Res(\zeta_i^b)\right)\\
        =
        \mathcal{R}(x_i^- +1, x_{2i+1}, y_i^- +1, y_i^+ -1) \setminus U_i = \mathcal{R}(x_i^- +1, x_{2i+1}, y_i^- +1, y_i^+ -1),
    \end{align*}
    as by our induction hypothesis and the explicit formulas for $R_h^u$ and $R_h^b$, since all vertices in $R_h^u$ and $R_h^b$ with $h < i$ lie outside the interior of the region $\mathcal{R}(x_i^- + 1, x_i^+ -1, y_i^- + 1, y_i^+ -1)$, $U_i$ and $\mathcal{R}(x_i^- +1, x_{2i+1}, y_i^- +1, y_i^+ -1)$ are disjoint. Likewise, we may find
    \begin{align*}
        R_{i+1}^b = \mathcal{R}(x_{2i+2}, x_i^+ -1,
        y_i^ - +1, y_i^+ -1).
    \end{align*}
    These are the asserted even-stage formulas, as desired. 

    If $i$ is even, we apply the same arguments using Lemma \ref{lem: active-rectangle-update} to find that $Z_{i+1} = \mathcal{Z}(x_i^-, x_i^+, y_{2i+1}, y_{2i+2})$. If $Z_{i+1} = \varnothing$, then our construction terminates, so suppose $Z_{i+1} \neq \varnothing \iff y_{2i+1} < y_{2i+2}$. Likewise, use Lemma \ref{lem: char res vertices slope 1} to find 
    \begin{equation*}
        R_{i+1}^u = \mathcal{R}(x_i^- + 1, x_i^+ - 1, y_{2i+2}, y_i^+ - 1), \qquad R_{i+1}^b = \mathcal{R}(x_i^- + 1, x_i^+ - 1, y_i^- + 1, y_{2i+1}).
    \end{equation*}
    These are the asserted odd-stage formulas, as desired.

    It remains to prove that at every stage $i$ for which $Z_i \neq \varnothing$, $R_i^u$ and $R_i^b$ are nonempty, and when $i > 0$, they are disjoint. 

    For $i = 0$, after imposing the restriction $y_1, y_2 \in [1, n-2]$ on $R_0^u$ and $R_0^b$, we may choose $v_1 = (q, 1)$ and $v_2 = (p, 1)$, so the admissible portions of $R_0^u$ and $R_0^b$ are nonempty. 

    Let $i \geq 1$ and suppose $Z_i \neq \varnothing$. If $i$ is odd, then the termination restrictions force $x_i^+ - x_i^- \geq 2$, as otherwise the termination restrictions would have forced $Z_i = \varnothing$. Therefore, $x_i^- + 1 \leq x_i^+ -1$.

    The vertices chosen at the $i-1$ stage are in the vertical interval $[y_{i-1}^- + 1, y_{i-1}^+ - 1]$, so $y_{i-1}^- + 1 \leq y_i^- < y_i^+ \leq y_{i-1}^+ - 1$. This implies both $R_i^u$ and $R_i^b$ are nonempty. They are disjoint, as $R_i^u$ has vertical interval $[y_i^+, y_{i-1}^+ -1]$, $R_i^b$ has vertical interval $[y_{i-1}^- + 1, y_i^-],$ and $y_i^ - < y_i^+$.

    If $i$ is even, then the termination restrictions force $y_i^+ - y_i^ - \geq 2 \implies y_i^- + 1 \leq y_i^+ -1$. The vertices chosen at the $i-1$ stage satisfy $x_{i-1}^-+1\leq x_i^-<x_i^+ \leq x_{i-1}^+-1$, implying both $R_i^u$ and $R_i^b$ are nonempty. They are also disjoint because their respective horizontal intervals are disjoint.

    Finally, suppose a terminal restriction applies. If $i$ is odd and $y_i^+-y_i^-=1$, choose the two new vertices with the same horizontal coordinate in the common interval $[x_i^-+1,x_i^+-1]$. Then $x_{2i+1}=x_{2i+2}$, so the required inequality holds. 
    
    If $i$ is even and $x_i^+-x_i^-=1$, choose the two vertices with the same vertical coordinate in $[y_i^-+1,y_i^+-1]$. Then $y_{2i+1}=y_{2i+2}$. In either case, the selected vertices are distinct because $R_i^u$ and $R_i^b$ are disjoint.

    Therefore an admissible next pair exists at every non-terminal stage. If $\mathcal{Z}(A,B,C,D)\neq\varnothing$, then $A$ and $B$ are respectively the minimum first $x$-coordinate and maximum second $x$-coordinate among its canonically $x$-ordered pairs, and $C,D$ are the analogous extrema of the $y$-coordinates. Hence $A,B,C,D$ are uniquely determined by $\mathcal{Z}(A,B,C,D)$.
\end{proof}

\begin{corollary} \label{coro: induction result of Z_i form}
    By Lemma \ref{lem: induction}, for any run of Construction \ref{construction} and every reached stage $i\geq1$,
    \begin{equation*}
        Z_i=
        \begin{cases}
        \mathcal Z(x_{2i-3},x_{2i-2},y_{2i-1},y_{2i}),
        & i\text{ odd},\\
        \mathcal Z(x_{2i-1},x_{2i},y_{2i-3},y_{2i-2}),
        & i\text{ even}.
        \end{cases}
    \end{equation*}
\end{corollary}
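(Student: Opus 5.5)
The plan is to unwind the two recursive update rules of Lemma \ref{lem: induction} one step further, so that both the ``moving'' bounds and the ``inherited'' bounds of $Z_i$ are written in terms of the chosen vertices $v_\iota=(x_\iota,y_\iota)$. Throughout, ``stage $i$ is reached'' means that $Z_{i-1}\neq\varnothing$. Hence the active bounds at stage $i-1$ are defined, and $Q_i=\{v_{2i-1},v_{2i}\}$ has been chosen.

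\emph{Case $i$ odd.} Then $i-1$ is even, and the even case of Lemma \ref{lem: induction} (with $i-1$ in place of $i$) gives
\begin{equation*}
Z_i=\mathcal Z\bigl(x_{i-1}^-,x_{i-1}^+,y_{2(i-1)+1},y_{2(i-1)+2}\bigr)=\mathcal Z\bigl(x_{i-1}^-,x_{i-1}^+,y_{2i-1},y_{2i}\bigr).
\end{equation*}
This equality holds regardless of whether $Z_i$ is empty, since the lemma states it unconditionally. It remains to identify $x_{i-1}^\pm$.
\begin{itemize}
\item If $i=1$, the initial bounds are $x_0^-=p=x_{-1}$ and $x_0^+=q=x_0$, because $v_{-1}=(p,n-1)$ and $v_0=(q,0)$. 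These agree with $x_{2i-3}$ and $x_{2i-2}$.
\item If $i\geq 3$, then $i-2$ is odd and $Z_{i-1}\neq\varnothing$. The odd case of Lemma \ref{lem: induction} applied at stage $i-2$ gives $x_{i-1}^-=x_{2(i-2)+1}=x_{2i-3}$ and $x_{i-1}^+=x_{2(i-2)+2}=x_{2i-2}$.
\end{itemize}

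\emph{Case $i$ even.} Then $i-1$ is odd, and the odd case of Lemma \ref{lem: induction} gives $Z_i=\mathcal Z(x_{2i-1},x_{2i},y_{i-1}^-,y_{i-1}^+)$. To identify $y_{i-1}^\pm$:
\begin{itemize}
\item If $i=2$, then $y_1^-=y_1$ and $y_1^+=y_2$ by the even case at stage $0$, which match $y_{2i-3}$ and $y_{2i-2}$.
\item If $i\geq 4$, then $i-2$ is even and nonterminal, and the even case at stage $i-2$ gives $y_{i-1}^\pm=y_{2i-3},\,y_{2i-2}$.
\end{itemize}

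The only delicate points are the index arithmetic and the base cases, where the inherited bounds come from $Q_0$ rather than from an earlier update. Those base cases are handled by the convention that $v_{-1}$ and $v_0$ carry the labels $x_{-1}=p$ and $x_0=q$. No step is a genuine obstacle, but it is worth checking that the nonemptiness hypotheses needed to invoke the bound formulas at stages $i-1$ and $i-2$ follow from stage $i$ being reached.
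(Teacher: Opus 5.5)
Your proposal is correct and matches the paper, which records this corollary as an immediate consequence of Lemma~\ref{lem: induction} with no separate proof; your two-step unwinding of the odd/even update rules is exactly what that citation abbreviates. The base cases $x_0^-=p=x_{-1}$, $x_0^+=q=x_0$ and $Z_1=\mathcal Z(p,q,y_1,y_2)$ are handled correctly. Your observation that the update formula for $Z_i$ holds unconditionally is also the right way to cover a reached but terminal stage, since $\mathcal Z(A,B,C,D)=\varnothing$ whenever $A\geq B$ or $C\geq D$.
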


The following theorem ensures that Construction \ref{construction} always terminates.

\begin{theorem} \label{thm: termination}
    For any grid $P_m \square P_n$ with $m, n \geq 3$, Construction \ref{construction} terminates.
\end{theorem}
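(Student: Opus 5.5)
We argue by a strictly decreasing potential. Define, at every non-terminal stage $i$, the quantity
\begin{equation*}
    \Phi_i := (x_i^+ - x_i^-) + (y_i^+ - y_i^-),
\end{equation*}
which is a positive integer because $Z_i \neq \varnothing$ forces $x_i^- < x_i^+$ and $y_i^- < y_i^+$ (Lemma \ref{lem: induction}, together with the remark that $\mathcal{Z}(A,B,C,D)\neq\varnothing$ iff $A<B$ and $C<D$). The aim is to show that $\Phi_{i+1} < \Phi_i$ whenever stage $i+1$ is also non-terminal. Since $\Phi_i \geq 2$ at every non-terminal stage and $\Phi_0 = (q-p)+(n-1) \leq m+n-2$, at most $m+n-3$ non-terminal stages can occur. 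By Lemma \ref{lem: induction}, an admissible pair can always be chosen at each non-terminal stage, so the construction never gets stuck. Hence every branch reaches a stage with $Z_i=\varnothing$ after finitely many steps, which is exactly termination.

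For the strict decrease we use the explicit update formulas of Lemma \ref{lem: induction}. The three cases are as follows.
\begin{itemize}
    \item[] \emph{Stage $i=0$.} We have $Z_1 = \mathcal{Z}(p,q,y_1,y_2)$ with $y_1,y_2\in[1,n-2]$ by the boundary restriction. So if $Z_1\neq\varnothing$, then $y_2-y_1 \le n-3 < n-1$, and the $x$-bounds are unchanged.
    \item[] \emph{Odd $i$.} We have $Z_{i+1}=\mathcal{Z}(x_{2i+1},x_{2i+2},y_i^-,y_i^+)$. Here $v_{2i+1}\in R_i^u$ and $v_{2i+2}\in R_i^b$ both have $x$-coordinates in $[x_i^-+1,x_i^+-1]$, so the new horizontal width is at most $x_i^+-x_i^- -2$, and the vertical bounds are unchanged.
    \item[] \emph{Even $i\ge 2$.} Symmetrically, the new vertical bounds $y_{2i+1},y_{2i+2}$ lie in $[y_i^-+1,y_i^+-1]$ by the region formulas, so the vertical width drops by at least $2$.
\end{itemize}
In each case $\Phi$ drops by at least $2$.

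The only mildly delicate point is making sure that the region formulas from Lemma \ref{lem: induction} really confine the new coordinates strictly inside the current active bounds. This is read directly off the stated rectangles: for odd $i$, both $R_i^u$ and $R_i^b$ have horizontal range $[x_i^-+1,x_i^+-1]$, and for even $i$ both have vertical range $[y_i^-+1,y_i^+-1]$. We also need that terminal cases are handled consistently. If Lemma \ref{lem: active-rectangle-update} yields an empty $Z_{i+1}$, the construction stops by definition; and the forced-termination restrictions only ever force emptiness sooner, so they cannot prolong a run. Once these points are in place, the monotone integer potential completes the argument.
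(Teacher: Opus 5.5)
Your proof is correct and follows the paper's approach. The paper tracks the two active widths $d_x^i$ and $d_y^i$ separately, showing that at each transition one strictly decreases while the other is unchanged; you sum them into the single potential $\Phi_i$, and you also treat the step from stage $0$ and the existence of an admissible next pair explicitly, which gives an explicit bound on the number of stages.
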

\begin{proof}
    By Corollary \ref{coro: induction result of Z_i form}, for $i \geq 1$, if $i$ is odd, then $Z_i = \mathcal{Z}(x_{2i-3},x_{2i-2}, y_{2i-1},y_{2i})$, and if $i$ is even, then $Z_i = \mathcal{Z}(x_{2i-1},x_{2i}, y_{2i-3},y_{2i-2})$.
    For every $i \geq 1$, define
    \begin{equation*}
        d_x^i = \begin{cases}
            x_{2i-2} - x_{2i-3} & \text{if } i \text{ is odd}
            \\
            x_{2i} - x_{2i-1} & \text{if } i \text{ is even}
        \end{cases}
        \qquad \qquad
        d_y^i = \begin{cases}
            y_{2i} - y_{2i-1} & \text{if } i \text{ is odd}
            \\
            y_{2i-2} - y_{2i-3} & \text{if } i \text{ is even.}
        \end{cases}
    \end{equation*}
    The construction terminates when $Z_i = \varnothing$, which happens when either $d_x^i \leq 0$ or $d_y^i \leq 0$. We may prove this must happen for some $i$ by showing that for $i \geq 1$, either $d_x^i > d_x^{i+1}$ or $d_y^i > d_y^{i+1}$.

    We have that for all odd $i \geq 1$, $Z_i = \mathcal{Z}(x_{2i-3},x_{2i-2}, y_{2i-1},y_{2i})$ and $Z_{i+1} = \mathcal{Z}(x_{2i+1},x_{2i+2}, y_{2i-1},y_{2i})$. We have that $d_y^i = d_y^{i+1}$. However, we have that
    \begin{align*}
        d_x^i = x_{2i-2} - x_{2i-3}
        \\
        v_{2i+2} \in R_i^b \implies x_{2i+2} \in [x_{2i-3} +1, x_{2i-2}-1] \implies x_{2i+2} < x_{2i-2}
        \\
        v_{2i+1} \in R_i^u \implies x_{2i+1} \in [x_{2i-3} +1, x_{2i-2}-1] \implies x_{2i+1} > x_{2i-3}
        \\
        \implies x_{2i+2} - x_{2i+1} < x_{2i-2} - x_{2i-3}
        \implies d_x^{i+1} < d_x^i.
    \end{align*}

    For all even $i \geq 2$, we have $Z_i = \mathcal{Z}(x_{2i-1},x_{2i}, y_{2i-3},y_{2i-2})$ and $Z_{i+1} = \mathcal{Z}(x_{2i-1},x_{2i}, y_{2i+1}, y_{2i+2})$. We have that $d_x^i = d_x^{i+1}$. However, we have that
    \begin{align*}
        d_y^i = y_{2i-2} - y_{2i-3}
        \\
        v_{2i+2} \in R_i^u \implies y_{2i+2} \in [y_{2i-3}+1,y_{2i-2}-1] \implies y_{2i+2} < y_{2i-2}
        \\
        v_{2i+1} \in R_i^b \implies y_{2i+1} \in [y_{2i-3}+1,y_{2i-2}-1] \implies y_{2i+1} > y_{2i-3}
        \implies d_y^{i+1} < d_y^i.
    \end{align*}
    If the run were infinite, $d_x^i$ would decrease by a positive integer amount at every odd transition and $d_y^i$ at every even transition. Since both are nonnegative integers, this is 
    impossible. Therefore, one of $d_x^i$, $d_y^i$ must eventually be nonpositive, at which point $Z_i = \varnothing$ and the construction terminates.
\end{proof}

The following lemmas characterize the pairs of $Q_{i+1}$ that must be discarded in order to ensure the minimality of the resolving set constructed by our construction.

\begin{lemma} \label{lem: cutting of R_0}
    Let $M$ be a minimal resolving set of cardinality at least $4$ containing $Q_0 = \{(p, n-1), (q, 0)\}$ with $q > p$. Then, $M \setminus Q_0$ contains no vertex on the horizontal boundaries of the grid. That is, every vertex in $M \setminus Q_0$ has $y$-coordinate in the interval $[1, n-2]$.
\end{lemma}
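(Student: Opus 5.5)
The plan is to argue by contradiction: assume some $w\in M\setminus Q_0$ lies on a horizontal boundary, and exhibit a proper subset of $M$ that is still resolving. Every resolvability check will go through Theorem \ref{thm: eliminating F_{<|1|}}. Once a subset contains an opposite-side boundary pair $\{(p',n-1),(q',0)\}$ with $p'<q'$, it suffices to check the $F_{=1}$ pairs for that pair, i.e.\ the pairs in $\mathcal{Z}(p',q',0,n-1)$ by Lemma \ref{lem: char nonres pairs}. For each individual vertex, Lemma \ref{lem: char res vertices slope 1} says exactly which $F_{=1}$ pairs it fails to resolve.

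By the $180^\circ$ rotation $(x,y)\mapsto(m-1-x,n-1-y)$, which swaps $(p,n-1)$ and $(q,0)$ and preserves $p<q$ after relabelling, it suffices to treat $w=(x,n-1)$ on the top side. Fix a pair $\{(a,b),(a+t,b+t)\}\in\mathcal{Z}(p,q,0,n-1)$ and note three facts from Lemma \ref{lem: char res vertices slope 1}:
\begin{itemize}
\item the diagonal part $\{(a+i,b+t-i)\}$ never meets row $n-1$, since $b+t\le n-1$ and $i\ge 1$;
\item $\mathcal{R}(a+t,m-1,0,b)$ never meets row $n-1$, since $b<n-1$;
\item hence $w$ fails to resolve the pair if and only if $x\le a$ and $b+t=n-1$, and in particular $v_{-1}$ fails exactly when $b+t=n-1$, because $a\ge p$ always.
\end{itemize}
I then split on $x$.

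\textbf{Case 1: $x\ge q$ (including $x=q$).} Here $x\le a$ is impossible, since $a<a+t\le q\le x$. So $w$ resolves every pair in $\mathcal{Z}(p,q,0,n-1)$, and $Q_0\cup\{w\}$ is resolving by Theorem \ref{thm: eliminating F_{<|1|}}. This set has size $3<|M|$, which contradicts minimality.

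\textbf{Case 2: $x<p$.} The failure set of $w$ on $\mathcal{Z}(p,q,0,n-1)$ equals that of $v_{-1}$, namely the pairs with $b+t=n-1$. So every $F_{=1}$ pair that $w$ resolves is already resolved by $v_{-1}$. The set $M\setminus\{w\}$ still contains $Q_0$ and resolves every pair in $\mathcal{Z}(p,q,0,n-1)$. By Theorem \ref{thm: eliminating F_{<|1|}} it is resolving, a contradiction.

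\textbf{Case 3: $p<x<q$.} This is the subtle case, because here $w$ is strictly stronger than $v_{-1}$, and the natural move is to remove $v_{-1}$ instead. Then $Q_0$ is no longer inside the smaller set, so I switch to the new opposite-side pair $\{w,v_0\}=\{(x,n-1),(q,0)\}$ with $x<q$. Now Theorem \ref{thm: eliminating F_{<|1|}} requires only that $M\setminus\{v_{-1}\}$ resolve $\mathcal{Z}(x,q,0,n-1)\subseteq\mathcal{Z}(p,q,0,n-1)$.

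Take such a pair. $M$ resolves it. If the resolver is not $v_{-1}$ we are done. If it is $v_{-1}$, then $b+t<n-1$, and then $w$ also resolves it by the criterion above. Hence $M\setminus\{v_{-1}\}$ is resolving, contradicting minimality.

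The main obstacle I anticipate is exactly this Case 3, where one must realise that the redundant vertex is $v_{-1}$ rather than $w$ and re-apply Theorem \ref{thm: eliminating F_{<|1|}} with a different base pair. One should also confirm that the rotation symmetry correctly transports the bottom-boundary case.
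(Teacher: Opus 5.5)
Your three cases correspond exactly to the paper's, reflected. You put $w$ on the side of $v_{-1}$, while the paper puts it on the side of $v_0$; both handle the other side with the same rotation. In each case you exhibit the same redundant vertex or the same small resolving subset:
\begin{itemize}
\item when $w$ lies past $q$, the set $Q_0\cup\{w\}$ is already resolving;
\item when $w$ lies before $p$, you delete $w$;
\item when $p<x<q$, you delete the $Q_0$-vertex on $w$'s side and re-base Theorem \ref{thm: eliminating F_{<|1|}} on $\{w,v_0\}$.
\end{itemize}
In the first case you check resolvability directly from Lemma \ref{lem: char res vertices slope 1} and Theorem \ref{thm: eliminating F_{<|1|}} rather than quoting Theorem 2 of \cite{andersen2016minimum}. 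That is cleaner and avoids the paper's corner caveat. You silently omit $x=p$, but that is harmless, since then $w=v_{-1}$.

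There is, however, a false claim in your third bullet. The rectangle $\mathcal{R}(0,a,b+t,n-1)$ spans rows $b+t$ through $n-1$, so it contains the whole top-row segment $\{(x,n-1)\mid x\le a\}$ for every pair, because $b+t\le n-1$ always. Hence $w=(x,n-1)$ fails to resolve $\{(a,b),(a+t,b+t)\}$ if and only if $x\le a$, with no condition on $b+t$. In particular $v_{-1}$ resolves no pair of $\mathcal{Z}(p,q,0,n-1)$ at all. This is forced: by Lemma \ref{lem: char nonres pairs}, that set is exactly the set of $F_{=1}$ pairs left unresolved by $Q_0$. For a concrete check, take $m=n=5$, $p=1$, $q=3$: the vertex $(1,4)$ is at distance $4$ from both $(1,0)$ and $(2,1)$, even though $b+t=1<4$.

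Fortunately none of your cases depends on the false part:
\begin{itemize}
\item Case 1 uses only that failure forces $x\le a$, which is true.
\item In Case 2, $w$ and $v_{-1}$ both fail on every pair of $Z_0$, so their failure sets do coincide and $w$ is redundant, as you conclude.
\item In Case 3, the branch ``the resolver is $v_{-1}$'' never occurs, so your first branch alone finishes the argument. That is precisely the paper's justification that the deleted $Q_0$-vertex resolves none of the relevant pairs.
\end{itemize}
Replace the criterion by the correct one and the proof is complete. Case 3 is then no subtler than the others.
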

\begin{proof}
    Suppose without loss of generality that some vertex $w \in M \setminus Q_0$ is chosen on the horizontal boundary of the grid $y=0$. Given that $v_{-1} = (p, n-1)$ and $v_0 = (q, 0)$, we consider the following cases.

    If $x_{w} < p$, then the subset of vertices $\{v_{-1}, v_0, w\}$ either is a minimal resolving set by Theorem $2$ in \cite{andersen2016minimum}, or contains a pair of two corner vertices sharing a side, which is a minimal resolving set by \cite{melter1984metric}. In either case, it contains a proper resolving set of $M$, which breaks minimality of $M$, so we eliminate this case.

    If $p < x_{w} < q$, we find that $M \setminus \{v_0\}$ is a resolving set. Without $v_0$, we have opposite boundary vertices $v_{-1}$ and $w$, and by Lemma \ref{lem: char nonres pairs}, the vertex pairs of the $F_{=1}$ form that may be unresolved by this pair in $M$ are $\mathcal{Z}(p, x_w, 0, n-1)$.
    
    Since $x_w < q$, $v_0$ does not resolve any of these pairs of vertices, so all $F_{=1}$ vertex pairs must be resolved by $M \setminus \{v_0\}$. Therefore by Theorem \ref{thm: eliminating F_{<|1|}}, $M \setminus \{v_0\}$ must be a resolving set, breaking the minimality of $M$.

    Finally, if $q < x_{w}$, then we may find that $M \setminus \{w\}$ is a resolving set. Without $w$, we have opposite boundary vertices $v_{-1}$ and $v_0$, and by Lemma \ref{lem: char nonres pairs}, the vertex pairs of the $F_{=1}$ form that may be unresolved by this pair are $\mathcal{Z}(p, q, 0, n-1)$.
    
    Since $q < x_{w}$, $w$ does not resolve any of these pairs of vertices, so all such vertex pairs must be resolved by $M \setminus \{w\}$. Therefore, by Theorem \ref{thm: eliminating F_{<|1|}}, $M \setminus \{w\}$ must be a resolving set, breaking minimality of $M$.

    If $x_{w} = p$, then the set of vertices $\{v_{-1}, v_0, w\}$ either is a resolving set by Theorem $2$ in \cite{andersen2016minimum} or contains a pair of two corner vertices sharing a side, which is a minimal resolving set by \cite{melter1984metric}. In either case, it contains a proper resolving set of $M$, contradicting minimality, so we do not consider this case.

    If $x_{w} = q$, then $w = (q,0) = v_0$, so this case is not possible.

    We may treat the other boundary case similarly by applying the grid isomorphism $T : P_m \square P_n \to P_m \square P_n$, $T(x,y) = (m-1-x, n-1-y)$. Therefore, since $v_{-1} = (p, n-1)$ and $v_{0} = (q, 0)$, there can be no other vertex in our set with a $y$-coordinate of $0$ or $n-1$.
\end{proof}

\begin{lemma} \label{lem: perpendicular pair restrictions}
    Let $Q_0, \dots, Q_i$ satisfy the recursive vertex-selection rules of Construction \ref{construction}, except possibly the $Q_1$ and termination restrictions, and suppose $i \geq 1$ and $\hat{\mathcal{Q}}_i$ resolves $P_m \square P_n$. Suppose that $x_1= m - 1$ and $x_2 = 0$. Then,
    \begin{itemize}
        \item If $y_1 < y_2$, the set $Q_1 \cup \dots \cup Q_i$ is resolving, so $\hat{\mathcal{Q}}_i$ is not minimal.
        \item If $y_1 = y_2$, $\hat{\mathcal{Q}}_i$ contains a $3$-minimal subset and is not minimal.
        \item If $y_1 > y_2$, then $Z_1 = \varnothing$. Then, the set $\hat{\mathcal{Q}}_1$ is a $4$-minimal if and only if $Q_0$ contains no corner vertex.
    \end{itemize}
\end{lemma}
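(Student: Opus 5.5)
We work in the setting of Lemma \ref{lem: induction}: $v_1=(m-1,y_1)\in R_0^b$ and $v_2=(0,y_2)\in R_0^u$, with $y_1,y_2\in[1,n-2]$ by the boundary restriction. The three bullets are handled separately. In each case we identify a proper subset of $\hat{\mathcal{Q}}_i$ that still resolves, or, in the last case, verify unique resolvers directly. The key observation is that $v_1$ and $v_2$ lie on the two vertical sides, $x=m-1$ and $x=0$. So $\{v_2,v_1\}$ is itself a pair of opposite boundary vertices, and Lemma \ref{lem: char nonres pairs} and Theorem \ref{thm: eliminating F_{<|1|}} apply to it after rotating the grid by $90^\circ$.

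\textbf{Case $y_1<y_2$.} Rotate the picture so that $v_2=(0,y_2)$ and $v_1=(m-1,y_1)$ play the roles of $(p,n-1)$ and $(q,0)$. The transposed version of Lemma \ref{lem: char nonres pairs} then says that the unresolved slope-$1$ pairs for $\{v_1,v_2\}$ are those with both $y$-coordinates in $[y_1,y_2]$, namely $\mathcal{Z}(0,m-1,y_1,y_2)$. By Theorem \ref{thm: eliminating F_{<|1|}}, transposed, $Q_1\cup\dots\cup Q_i$ is resolving provided it resolves every pair in $\mathcal{Z}(0,m-1,y_1,y_2)$.

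Take such a pair $\{(a,b),(a+t,b+t)\}$. Since $\hat{\mathcal{Q}}_i$ resolves, some $s\in\hat{\mathcal{Q}}_i$ resolves it. We show $s$ can be taken outside $Q_0$, that is, $v_{-1}$ and $v_0$ resolve nothing in this family. By Lemma \ref{lem: char res vertices slope 1}, $v_{-1}=(p,n-1)$ fails to resolve the pair exactly when $p\le a$ or $p\ge a+t$, since $n-1\ge b+t$ always. This covers every pair except those straddling $x=p$, and symmetrically $v_0$ covers all but those straddling $x=q$.

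So the pairs that $Q_0$ does resolve are those with $a<p<a+t$ or $a<q<a+t$. For these we need to show that $v_1$ or $v_2$ also resolves them, or that $Q_0$ is redundant for them. Using $y_1<y_2$ and Lemma \ref{lem: char res vertices slope 1}, $v_2=(0,y_2)$ fails to resolve only if $y_2\ge b+t$ (first rectangle, since $a\ge0$) or $y_2$ lies on the anti-diagonal. A cleaner route is to compare non-resolving sets directly: show that $\mathcal{Z}(0,m-1,y_1,y_2)\cap Z_0=\mathcal{Z}(p,q,y_1,y_2)$ is exactly $Z_1$, and that pairs outside $[p,q]$ were never resolved by $Q_0$ in the first place.

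I expect this bookkeeping to be the main obstacle. The plan is to show, via the formula in Lemma \ref{lem: active-rectangle-update}, that for any pair in $\mathcal{Z}(0,m-1,y_1,y_2)$ the set $\Res$ restricted to $Q_0$ is contained in $\Res$ restricted to $Q_1\cup\dots\cup Q_i$, using the fact that every later vertex lies in the strip $x\in(p,q)$.

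\textbf{Case $y_1=y_2$.} Here $v_1$ and $v_2$ are opposite vertical-boundary vertices on a horizontal line. Together with $v_{-1}$ (or $v_0$) they form a configuration covered by the characterization of $3$-minimals (Theorem 2 of \cite{andersen2016minimum}). Alternatively, compute directly: the transposed Lemma \ref{lem: char nonres pairs} gives unresolved slope-$1$ pairs $\mathcal{Z}(0,m-1,y_1,y_1)=\varnothing$, leaving only the transposed $F_{<|1|}$-type pairs, which $v_{-1}$ resolves by Lemma \ref{lem: char res vertices slope < |1|}. This gives a resolving $3$-subset, hence a proper resolving subset, so $\hat{\mathcal{Q}}_i$ is not minimal.

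\textbf{Case $y_1>y_2$.} Lemma \ref{lem: induction} gives $Z_1=\mathcal{Z}(p,q,y_1,y_2)=\varnothing$, so $\hat{\mathcal{Q}}_1$ is resolving by Theorem \ref{thm: eliminating F_{<|1|}}. For the equivalence we proceed in two directions.

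If $Q_0$ contains a corner, say $v_{-1}=(0,n-1)$, then $v_{-1}$ and $v_2=(0,y_2)$ lie on the same side. We exhibit a proper resolving subset by dropping $v_0$ and checking, via Lemma \ref{lem: char nonres pairs} applied to $\{v_{-1},v_1\}$ or to the pair of vertical-side vertices, that the remaining three vertices resolve.

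If there is no corner, we exhibit for each $v\in\hat{\mathcal{Q}}_1$ a pair resolved only by $v$. For $v_1$ and $v_2$ we take $\zeta_0^b$ and $\zeta_0^u$, which by construction lie in $R_0^b$ and $R_0^u$ resp., and which are not resolved by the other three vertices, as one checks from Lemma \ref{lem: char res vertices slope 1}. For $v_{-1}$ and $v_0$ we use the transposed $\zeta$-pairs of the vertical system, $\{(0,y_2+1),(1,y_2)\}$-type pairs near $v_2$ and $v_1$. The non-corner hypothesis ensures $p\ge1$ or $q\le m-2$ appropriately, so that $v_{-1}$ (resp. $v_0$) is the unique resolver. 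That hypothesis is exactly what fails in the corner case.
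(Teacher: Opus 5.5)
Your second and third bullets are essentially fine. In the non-corner case of the third bullet, your direct unique-resolver check is a valid alternative to the paper's argument that no $2$- or $3$-minimal subset exists, though the roles appear swapped. The pair $\{(0,y_2+1),(1,y_2)\}$ is resolved by $v_0$ always, and by $v_{-1}$ only when $p=0$, so for $p\ge 1$ its unique resolver is $v_0$. Symmetrically, $\{(m-2,y_1),(m-1,y_1-1)\}$ is uniquely resolved by $v_{-1}$ when $q\le m-2$.

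The first bullet, however, has a genuine gap. The step you defer as ``bookkeeping'' is the whole content of the claim, and your preliminary analysis of it is wrong. Take a pair $\{(a,b),(a+t,b+t)\}\in\mathcal{Z}(0,m-1,y_1,y_2)$. Then $b<b+t\le y_2\le n-2$, so by Lemma~\ref{lem: char res vertices slope 1} the vertex $v_{-1}=(p,n-1)$ can only lie in the first non-resolving rectangle. It fails to resolve the pair iff $p\le a$, not ``$p\le a$ or $p\ge a+t$'', because the third rectangle requires $n-1\le b$. Likewise $v_0$ fails iff $a+t\le q$. So $Q_0$ resolves exactly the pairs of this family with $a<p$ or $a+t>q$, that is, every pair outside $\mathcal{Z}(p,q,y_1,y_2)=Z_1$. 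Your claims that $Q_0$ resolves only pairs straddling $x=p$ or $x=q$, and that pairs outside $[p,q]$ ``were never resolved by $Q_0$'', are therefore backwards.

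Moreover, the fact you plan to use, that all later vertices lie in the strip $p<x<q$, is not enough. For example, if $i\ge 3$ and $p\ge1$, the vertex $v_5\in R_2^b=\mathcal{R}(x_4,q-1,y_1+1,y_2-1)$ lies in the lower-right non-resolving rectangle of $\{(p-1,y_5),(p,y_5+1)\}$. That pair belongs to the family and is resolved by $v_{-1}$.

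The missing idea is that $Q_2$ alone absorbs everything $Q_0$ did on this family.
\begin{itemize}
\item When $y_1<y_2$, $Z_1=\mathcal{Z}(p,q,y_1,y_2)\neq\varnothing$, so resolvability of $\hat{\mathcal{Q}}_i$ forces $i\ge 2$. Then $v_3\in R_1^u=\mathcal{R}(p+1,q-1,y_2,n-2)$ and $v_4\in R_1^b=\mathcal{R}(p+1,q-1,1,y_1)$.
\item Since $y_3\ge y_2\ge b+t$, Lemma~\ref{lem: char res vertices slope 1} shows that $v_3$ resolves every pair of the family with $a<x_3$, hence every one with $a<p$.
\item Since $y_4\le y_1\le b$, $v_4$ resolves every such pair with $a+t>x_4$, hence every one with $a+t>q$.
\item Pairs of the family not resolved by $Q_0$ are resolved by $Q_1\cup\dots\cup Q_i$, because $\hat{\mathcal{Q}}_i$ is resolving.
\end{itemize}
The transposed Theorem~\ref{thm: eliminating F_{<|1|}} then finishes the argument.

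The paper encodes the same fact through $T(x,y)=(n-1-y,m-1-x)$ and Lemma~\ref{lem: active-rectangle-update} applied in the rotated frame. There, $T(Q_1\cup Q_2)$ leaves exactly $T(Z_2)$ unresolved, and thereafter exactly $T(Z_j)$. In both versions the essential input is the vertical position of $v_3$ and $v_4$ relative to $[y_1,y_2]$, not merely their horizontal position in the strip.
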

\begin{proof}
    If we have $x_1 = m-1$, $x_2=0$, $y_1 < y_2$, and $\hat{\mathcal{Q}}_i$ is a resolving set, since $v_1$ and $v_2$ are opposite-side boundary vertices, we may show that $Q_1 \cup \dots \cup Q_i$ is also a resolving set.

    We apply the grid isomorphism $T: P_m \square P_n \to P_n \square P_m$, $T(x,y) = (n-1-y, m-1-x)$, which is a rotation and reflection such that
    \begin{equation*}
        T(v_1) = (n-1-y_1, 0), \qquad T(v_2) = (n-1-y_2, m-1).
    \end{equation*}
    Since $y_1 < y_2 \implies n-1-y_1 > n - 1 - y_2$, $T(v_1)$ and $T(v_2)$ form an opposite-side boundary pair in the orientation required for Construction \ref{construction}. The pairs of the $F_{=1}$ type that are unresolved by $T(Q_1) = \{T(v_1), T(v_2)\}$ have respective horizontal and vertical bounds
    \begin{equation*}
        [n-1-y_2, n-1-y_1] \qquad [0, m-1].
    \end{equation*}
    Thus, $Z_1 \neq \varnothing$. Since $\hat{\mathcal{Q}}_i$ is resolving, the run does not terminate after $Q_1$, so $i \geq 2$. Now, for $Q_2 = \{v_3, v_4\}$, by Lemma \ref{lem: induction}, we have
    \begin{equation*}
        y_3 \geq y_2 \qquad y_4 \leq y_1 \qquad p < x_3 \qquad x_4 < q.
    \end{equation*}
    By Lemma \ref{lem: active-rectangle-update}, we find that the pairs of the $F_{=1}$ type that are unresolved by $T(Q_1 \cup Q_2)$ have respective horizontal and vertical bounds
    \begin{equation*}
        [n-1-y_2, n-1-y_1] \qquad [m-1-x_4, m-1-x_3].
    \end{equation*}
    By Lemma \ref{lem: induction}, these are the bounds for $Z_2$ under the grid isomorphism $T$.

    The same argument as above applies inductively to the remaining $Q_j$. We find that for every $j \geq 2$, the pairs of the $F_{=1}$ type that are unresolved by $T(Q_1 \cup \dots \cup Q_j)$ are exactly the ones in $T(Z_j)$.
    Suppose $Q_0, \dots, Q_i$ satisfy the recursive rules from Construction \ref{construction}, except that the prohibited terminal ordering is allowed, and suppose $Z_i = \varnothing$. Then, $T(Q_1 \cup \dots \cup Q_i)$ resolves every pair of the $F_{=1}$ type. Therefore, by Theorem \ref{thm: eliminating F_{<|1|}}, $T(Q_1 \cup \dots \cup Q_i)$ and therefore $Q_1 \cup \dots \cup Q_i$ resolves the entire grid. Thus, $\hat{\mathcal{Q}}_i$ is not a minimal resolving set.

    If $x_1 = m-1$, $x_2=0$, and $y_1 = y_2$, then the subset of vertices $\{v_0, v_1, v_2\} \subset \hat{\mathcal{Q}}_i$ is a resolving set by Theorem $2$ in \cite{andersen2016minimum}. Therefore, $\hat{\mathcal{Q}}_i$ is not a minimal resolving set.

    Finally, if $x_1 = m-1$, $x_2 = 0$, and $y_1 > y_2$, then $Z_1 = \mathcal{Z}(p, q, y_1, y_2)$, so $\hat{\mathcal{Q}}_1$ is a resolving set. 
    
    By Theorem $2$ in \cite{andersen2016minimum}, if $v_{-1} \in Q_0$ is a corner vertex, that is, $v_{-1} = (0, n-1)$, then the set $\{v_{-1}, v_1, v_2\}$ forms a $3$-minimal, so $\hat{\mathcal{Q}}_1$ is not a minimal resolving set. Likewise, if $v_0 \in Q_0$ is a corner vertex, that is, $v_0 = (m-1, 0)$, then the set $\{v_0, v_1, v_2\}$ forms a $3$-minimal, so $\hat{\mathcal{Q}}_1$ is not a minimal resolving set.
    
    If neither vertex in $Q_0$ is a corner vertex, then $\hat{\mathcal{Q}}_1$ contains no corner vertices and therefore contains no $2$-minimal by \cite{melter1984metric}. By Theorem $2$ in \cite{andersen2016minimum}, $\hat{\mathcal{Q}}_1$ contains no $3$-minimals, so it itself is a minimal resolving set.

    Therefore, if $x_1 = m-1$, $x_2 = 0$, and $y_1 > y_2$, then $\hat{\mathcal{Q}}_1$ is a minimal resolving set if and only if $Q_0$ contains no corner vertices.

\end{proof}

Define a region-valid prefix to be a sequence of pairs of vertices, $Q_0$, $Q_1$, \dots, $Q_i$ that satisfy the region-selection and boundary rules from Construction \ref{construction} without necessarily satisfying the $Q_1$ restrictions and termination rules. 

\begin{lemma} \label{lem: domination within R}
    Let $M$ be a minimal resolving set of $P_m \square P_n$ containing a region-valid prefix $Q_0, Q_1, \dots , Q_j$. Then, $M$ contains at most one vertex in any set $R_i^u$ or $R_i^b$ for every $0 \leq i \leq j$ for which $Z_i \neq \varnothing$. 
\end{lemma}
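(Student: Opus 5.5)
Suppose, for contradiction, that $M$ contains two distinct vertices $w, w'$ in the same region $R_i^\sigma$ (with $\sigma \in \{u,b\}$ and $Z_i \neq \varnothing$). The plan is to show that one of them, say $w'$, resolves no pair uniquely within $M$. Then $M \setminus \{w'\}$ is still resolving, contradicting minimality.

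The key tool is a domination property of the regions. By Lemma \ref{lem: induction}, every region is an explicit rectangle (or, for $i=0$, a union of two rectangles), and every vertex in it resolves $\zeta_i^\sigma$ while resolving none of $\zeta_h^u,\zeta_h^b$ for $h<i$. I would show that the set of $F_{=1}$ pairs resolved by a vertex $z \in R_i^\sigma$ is governed by its position relative to the active rectangle $Z_i$. Concretely, take $i$ odd and $\sigma = u$, so that $R_i^u = \mathcal{R}(x_i^-+1, x_i^+-1, y_i^+, y_{i-1}^+-1)$. By Lemma \ref{lem: active-rectangle-update}, adding $z=(x_z,y_z)$ with $y_z \ge y_i^+$ cuts the unresolved set to $\mathcal{Z}(x_z, x_i^+, y_i^-, y_i^+)$. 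This depends only on $x_z$, so the pairs of $Z_i$ resolved by $z$ are exactly those with an endpoint $x$-coordinate below $x_z$, roughly. Consequently, among the vertices of $M$ in $R_i^u$, the one with the largest $x$-coordinate resolves every pair of $Z_i$ that any other vertex of $M$ in $R_i^u$ resolves. The other cases follow the same way by symmetry: $R_i^b$, even $i$, and $i=0$.

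The main obstacle is that uniqueness of resolution is a global condition, not just one about pairs in $Z_i$. A vertex $w' \in R_i^\sigma$ might uniquely resolve a pair that was already unresolved before stage $i$ but lies outside $Z_i$, or a pair of $F_{<|1|}$ type. To handle this, I would first note that all pairs outside $Z_i$ are already resolved by $\hat{\mathcal{Q}}_i$, which is contained in $M$. By Theorem \ref{thm: eliminating F_{<|1|}}, it suffices to consider $F_{=1}$ pairs with respect to $Q_0$. So if $w'$ uniquely resolved some pair, that pair would be an $F_{=1}$ pair not resolved by $\hat{\mathcal{Q}}_i$, that is, an element of $Z_i$. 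For such a pair, I would verify directly from Lemma \ref{lem: char res vertices slope 1} the following: if $w'$ resolves a pair $\{(a,b),(a+t,b+t)\} \in Z_i$, then so does the dominating vertex $w$. In the odd-$i$, $u$-case, $w'$ lies above the rectangle with $x_{w'} \le x_w$. Then $w'$ fails to resolve exactly when it lies in $\mathcal{R}(0,a,b+t,n-1)$, since the antidiagonal and lower pieces are excluded by $y_{w'} \ge y_i^+ \ge b+t$. That is, $w'$ resolves the pair exactly when $x_{w'} > a$, and then $x_w > a$ as well.

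A subtlety is the case $x_w = x_{w'}$ with different $y$-coordinates. Here both vertices resolve exactly the same pairs of $Z_i$, so either one can be removed. A second subtlety is $i=0$, where $R_0^\sigma$ is a union of two rectangles. In that case I would compute the resolved subset of $\mathcal{Z}(p,q,0,n-1)$ for each piece separately. A vertex in $\mathcal{R}(0,p,0,n-2)$ resolves the pairs with $b+t > y_z$. A vertex in $\mathcal{R}(p+1,m-1,n-1,n-1)$ resolves those with $a < x_z$. Since these are not nested across the two pieces, the argument there may require instead showing that one of the three vertices in $Q_0 \cup \{w,w'\}$ is redundant, using the boundary restriction of Lemma \ref{lem: cutting of R_0}. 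That lemma shows such $w$ cannot lie on the top row, which leaves only the first piece, where the resolved sets are nested by $y$-coordinate. This completes the contradiction in every case.
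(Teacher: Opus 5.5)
Your proposal is correct and follows essentially the same route as the paper. Both arguments rest on four steps:
\begin{itemize}
\item a within-region domination argument read off from Lemma~\ref{lem: active-rectangle-update} and Lemma~\ref{lem: char res vertices slope 1}, e.g.\ the larger $x$-coordinate dominating in $R_i^u$ for odd $i$;
\item the observation that $F_{=1}$ pairs outside $Z_i$ are already resolved by $\hat{\mathcal{Q}}_i\subseteq M\setminus\{w'\}$;
\item Theorem~\ref{thm: eliminating F_{<|1|}}, to pass to full resolvability;
\item Lemma~\ref{lem: cutting of R_0}, to discard the boundary-row pieces of $R_0^u$ and $R_0^b$, so that the $i=0$ case becomes nested in the $y$-coordinate.
\end{itemize}
One phrasing should be tightened: state the conclusion as ``$M\setminus\{w'\}$ resolves every $F_{=1}$ pair, hence is resolving.'' Do not claim directly that any pair uniquely resolved by $w'$ must be of type $F_{=1}$, since a priori such a pair could be of type $F_{<|1|}$.
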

\begin{proof}
    We first consider the case where $i=0$. Suppose without loss of generality that the resolving set $M$ contains more than one vertex in $R_0^u$, so $M$ contains two vertices $w^1$ and $w^2$ in $R_0^u$. By Lemma \ref{lem: cutting of R_0}, neither vertex lies on a horizontal side, so $x_{w_1},x_{w_2}\leq p$.

    Without loss of generality, assume $y_{w_1}\leq y_{w_2}$. By Lemma 3.2, a vertex $w\in R_0^u$ resolves a pair
    \begin{equation*}
        \{(a,b),(a+t,b+t)\}\in Z_0
    \end{equation*}
    exactly when $y_w<b+t$. Hence every pair in $Z_0$ resolved by $w_2$ is also resolved by $w_1$. All pairs of type $F_{=1}$ outside $Z_0$ are already resolved by $Q_0$. Therefore, deleting $w_2$ from $M$ leaves every pair of type $F_{=1}$ resolved. By Theorem 3.4, $M\setminus\{w_2\}$ is still a resolving set, contradicting the minimality of $M$.
    
    The argument for $R_0^b$ is analogous. If $w_1,w_2\in M\cap R_0^b$, assume without loss of generality that $y_{w_1}\ge y_{w_2}$. By Lemma 3.2, a vertex $w\in R_0^b$ resolves
    \begin{equation*}
        \{(a,b),(a+t,b+t)\}\in Z_0
    \end{equation*}
    exactly when $y_w>b$. Thus every pair in $Z_0$ resolved by $w_2$ is also resolved by $w_1$, so $w_2$ may again be deleted without destroying resolvability. Therefore, $M$ contains at most one vertex from each of $R_0^u$ and $R_0^b$.
    
    Now, without loss of generality, suppose that for some $i$ prior to termination, the resolving set $M$ contains more than one vertex in $R_i^u$. Thus, $M$ contains some two vertices $w^1$ and $w^2$ in $R_i^u$. 

    If $i$ is odd, we have $R_i^u = \mathcal{R}(x_i^- + 1, x_i^+ -1, y_i^+, y_{i-1}^+ -1)$. Then, suppose that $x_{w^1} \geq x_{w^2}$. Then, by Lemma \ref{lem: induction}, we have $x_{w^1}, x_{w^2} \in [x_i^- +1, x_i^+ - 1] \subset (x_i^-, x_i^+)=(x_{2i-3}, x_{2i-2})$ and $y_{w^1}, y_{w^2} \geq y_i^+ = y_{2i}$.
    
    Then, since $Z_i = \mathcal{Z}(x_{2i-3}, x_{2i-2}, y_{2i-1}, y_{2i})$, by Lemma \ref{lem: active-rectangle-update}, the set of all pairs of vertices unresolved by $\hat{\mathcal{Q}}_i \cup \{w^1\}$ is $\mathcal{Z}(x_{w^1}, x_{2i-2}, y_{2i-1}, y_{2i})$
    and the set of all pairs of vertices unresolved by $\hat{\mathcal{Q}}_i \cup \{w^1, w^2\}$ is
    \begin{align*}
        \mathcal{Z}(x_{w^1}, x_{2i-2}, y_{2i-1}, y_{2i})
        \cap \mathcal{Z}(x_{w^2}, x_{2i-2}, y_{2i-1}, y_{2i})
        = \mathcal{Z}(x_{w^1}, x_{2i-2}, y_{2i-1}, y_{2i}).
    \end{align*}
    Every pair of vertices of the $F_{=1}$ form outside $Z_i$ is already resolved by $\hat{\mathcal{Q}}_i$, so after deleting $w^2$ from $M$, all pairs of vertices of the $F_{=1}$ form outside $Z_i$ are resolved by $\hat{\mathcal{Q}}_i \subseteq M$ while all pairs of the $F_{=1}$ form resolved by $w^2$ inside $Z_i$ are still resolved by $w^1$. 
    
    Thus, $M \setminus \{w^2\}$ still resolves all pairs of vertices of the $F_{=1}$ form, and by Theorem \ref{thm: eliminating F_{<|1|}}, it also resolves all pairs of the $F_{<|1|}$ form. Thus, if $w^1$ and $w^2$ are both in $M$, then $M$ is no longer minimal, as $M  \setminus \{w^2\}$ is still a resolving set.

    The remaining cases follow from the same argument, with the dominating vertex determined by the coordinate that changes the active bound. 
    
    If $i$ is odd and $w_1,w_2\in R_i^b$, label them so that $x_{w_1}\le x_{w_2}$. Lemma 4.2 shows that every pair in $Z_i$ resolved by $w_2$ is also resolved by $w_1$. If $i$ is even and $w_1,w_2\in R_i^u$, label them so that
    $y_{w_1}\le y_{w_2}$; again, every pair in $Z_i$ resolved by $w_2$ is also resolved by $w_1$. Finally, if $i$ is even and $w_1,w_2\in R_i^b$, label them so that $y_{w_1}\ge y_{w_2}$, in which case every pair in $Z_i$ resolved by
    $w_2$ is also resolved by $w_1$.
    
    In each case, all pairs of type $F_{=1}$ outside $Z_i$ are already resolved by $\hat{\mathcal{Q}}_i$. Thus the dominated vertex may be deleted from $M$ while leaving every pair of type $F_{=1}$ resolved. By Theorem 3.4, the remaining set is still resolving, contradicting the
    minimality of $M$. Therefore, $M$ contains at most one vertex in each region $R_i^u$ and $R_i^b$.
\end{proof}

\begin{lemma} \label{lem: forcing termination to hold}
    Let $M$ be a minimal resolving set containing a region-valid prefix $Q_0, \dots, Q_{i+1}$ where $Z_i \neq \varnothing$.
    
    We claim that if $i=0$ and $q - p = 1$, then we must have $y_2 \leq y_1$. If $i \geq 2$ is even, $Z_i \neq \varnothing$, and $x_{2i} - x_{2i-1} = 1$, then we must have $y_{2i+1} \geq y_{2i+2}$. If $i \geq 1$ is odd, $Z_i \neq \varnothing$, and $y_{2i} - y_{2i-1} = 1$, then we must have $x_{2i+1} \geq x_{2i+2}$.

    In every case, the necessary inequality implies $Z_{i+1} = \varnothing$. Thus, any minimal resolving set extending the construction prefix must terminate after the choice of $Q_{i+1}$ when the active rectangle $Z_i$ has a width or height of $1$.
\end{lemma}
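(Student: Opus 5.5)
The plan is to argue by contradiction in each of the three cases. Assume the stated inequality fails, so that $Z_{i+1}\neq\varnothing$ by Lemma \ref{lem: induction}. We then show that no vertex of $M$ can resolve the extremal pair $\zeta_{i+1}^u$, which contradicts the assumption that $M$ is resolving. The converse direction is immediate from Lemma \ref{lem: induction}. If the inequality holds (for $i=0$: $y_1\ge y_2$; for $i$ even: $y_{2i+1}\ge y_{2i+2}$; for $i$ odd: $x_{2i+1}\ge x_{2i+2}$), then $Z_{i+1}$ is a $\mathcal{Z}$-set with a degenerate or reversed interval, so $Z_{i+1}=\varnothing$ by the emptiness criterion of Section \ref{section: Notation and Terminology}.

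\textbf{Step 1: the next regions are empty.} Suppose $Z_{i+1}\neq\varnothing$. Lemma \ref{lem: induction} then gives the bounds at stage $i+1$ and explicit formulas for $R_{i+1}^u$ and $R_{i+1}^b$, and I will read emptiness off these formulas case by case.
\begin{itemize}
\item If $i=0$ and $q-p=1$, then $R_1^u=\mathcal{R}(p+1,q-1,y_2,n-2)$, whose $x$-range $[p+1,q-1]$ is empty.
\item If $i\ge 2$ is even and $x_i^+-x_i^-=1$, stage $i+1$ is odd. Its horizontal bounds are inherited from stage $i$, so $R_{i+1}^u=\mathcal{R}(x_i^-+1,x_i^+-1,\dots)$ again has an empty $x$-range.
\item If $i$ is odd and $y_i^+-y_i^-=1$, stage $i+1$ is even. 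Its vertical bounds are inherited, so $R_{i+1}^u=\mathcal{R}(\dots,y_i^-+1,y_i^+-1)$ has an empty $y$-range.
\end{itemize}
In every case $R_{i+1}^u=\varnothing$. By definition, $R_{i+1}^u=\Res(\zeta_{i+1}^u)\setminus U_{i+1}$, so this says $\Res(\zeta_{i+1}^u)\subseteq U_{i+1}$.

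\textbf{Step 2: no vertex of $M$ resolves $\zeta_{i+1}^u$.} By the last identity of Lemma \ref{lem: induction}, $U_{i+1}=\bigcup_{h\le i}(R_h^u\cup R_h^b)$. Each region $R_h^u$ and $R_h^b$ with $h\le i$ already contains the prefix vertex chosen from it. Lemma \ref{lem: domination within R} says $M$ has at most one vertex in each such region, since $Z_h\neq\varnothing$ for all $h\le i$. Hence $M\cap U_{i+1}$ consists only of prefix vertices $v_1,\dots,v_{2i+2}$. Any vertex of $M$ resolving $\zeta_{i+1}^u$ therefore lies in $M\cap\Res(\zeta_{i+1}^u)\subseteq M\cap U_{i+1}$, so it must be a prefix vertex; the vertices of $Q_0$ need no special treatment, since they do not resolve $\zeta_{i+1}^u$ either. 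But $\zeta_{i+1}^u\in Z_{i+1}$, and $Z_{i+1}$ is by definition the set of pairs left unresolved by $\hat{\mathcal{Q}}_{i+1}$. So no vertex of $M$ resolves $\zeta_{i+1}^u$, contradicting that $M$ is resolving. This forces the stated inequality, and then $Z_{i+1}=\varnothing$ as noted above.

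\textbf{Expected main obstacle.} The delicate point is Step 2's use of Lemma \ref{lem: domination within R}. I must make sure the hypothesis ``$M$ contains a region-valid prefix'' puts a prefix vertex in every region $R_h^u$ and $R_h^b$ for $h\le i$, so that no additional vertex of $M$ can sit in $U_{i+1}$. I must also check that the $i=0$ case is covered: Lemma \ref{lem: domination within R} explicitly treats $R_0^u$ and $R_0^b$, and Lemma \ref{lem: cutting of R_0} is not needed here. If this bookkeeping causes trouble, the fallback is a direct computation. In the width-$1$ case, every pair in $Z_i$ has the form $\{(x_i^-,b),(x_i^-+1,b+1)\}$. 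A vertex resolving such a pair must have $x$-coordinate outside $(x_i^-,x_i^+)$, and by Lemma \ref{lem: active-rectangle-update} any such vertex acts on $Z_i$ exactly as some vertex of $R_i^u$ or $R_i^b$ does. A second vertex of $M$ in those regions is then dominated, exactly as in the proof of Lemma \ref{lem: domination within R}.
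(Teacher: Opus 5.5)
Your proposal is correct and follows essentially the same route as the paper. You assume the inequality fails, read off $R_{i+1}^u=\varnothing$ from the formulas of Lemma~\ref{lem: induction} to get $\Res(\zeta_{i+1}^u)\subseteq\bigcup_{h\le i}(R_h^u\cup R_h^b)$, and use Lemma~\ref{lem: domination within R} to conclude that the only vertices of $M$ there are prefix vertices, none of which resolve $\zeta_{i+1}^u$; the paper phrases the final contradiction as a new resolver being a second vertex in some region, rather than as a failure of resolvability, but this is the same argument, and your fallback is not needed.
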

\begin{proof}
    If $i=0$, $q - p = 1$, and $y_1 < y_2$, then $Z_1 = \mathcal{Z}(p,q, y_1,y_2)$, so we have the unresolved pair of vertices $\zeta_1^u = \{(p, y_2-1),(q, y_2)\}$. We find that $R_1^u = \mathcal{R}(p + 1, q - 1, y_2, n-2) = \varnothing$, as $q - p = 1 \implies p + 1 > q - 1$. Therefore, we have that $\Res(\zeta_1^u) \subseteq R_0^u \cup R_0^b$.
    
    $Q_1$ already contains a vertex from $R_0^u$ and a vertex from $R_0^b$, neither of which resolve $\zeta_1^u$, as $\zeta_1^u \in Z_1$. Therefore, any vertex resolving $\zeta_1^u$ must be a second vertex from $R_0^u$ or $R_0^b$, which would contradict Lemma \ref{lem: domination within R}. Therefore, $M$ would not be a minimal resolving set.
    
    If $i>0$ is even, suppose we have that $Z_i \neq \varnothing$, $x_{2i} - x_{2i-1} = 1$, and $y_{2i+1} < y_{2i+2}$. Then, we would have that $Z_{i+1} = \mathcal{Z}(x_{2i-1},x_{2i}, y_{2i+1},y_{2i+2})$.
    
    Then, we have that the pair of vertices $\zeta_{i+1}^u = \{(x_{2i-1}, y_{2i+2}-1),(x_{2i}, y_{2i+2})\}$ is unresolved. However, we also have that the set of vertices that we may add to our set to resolve $\zeta_{i+1}^u$, $R_{i+1}^u = \mathcal{R}(x_{2i-1} + 1, x_{2i} - 1, y_{2i+2}, y_{2i-2}-1)$, is empty, as $x_{2i} - x_{2i-1} = 1 \implies x_{2i-1} + 1 > x_{2i} - 1$.

    Then, we must have that
    \begin{align*}
        \Res(\zeta_{i+1}^u) \subseteq \bigcup_{0 \leq h \leq i} \left(\Res(\zeta_h^u) \cup \Res(\zeta_h^b) \right) = \bigcup_{0 \leq h \leq i}\left(R_h^u \cup R_h^b \right),
    \end{align*}
    with the last equality following from Lemma \ref{lem: induction}.

    However, our construction has already selected one vertex from each $R_h^u$ and $R_h^b$ for $0 \leq h \leq i$, none of which resolve $\zeta_{i+1}^u$. Thus, any additional vertex we add would be a second vertex in either $R_h^u$ or $R_h^b$ for some $0 \leq h \leq i$, contradicting Lemma \ref{lem: domination within R}. 
    
    Likewise, if $i$ is odd, suppose that we have that $Z_i \neq \varnothing$, $y_{2i} - y_{2i-1} = 1$, and $x_{2i+1} < x_{2i+2}$. Then, we would have that $Z_{i+1} = \mathcal{Z}(x_{2i+1},x_{2i+2}, y_{2i-1},y_{2i})$, and the pair of vertices $\zeta_{i+1}^u = \{(x_{2i+1}, y_{2i-1}),(x_{2i+1}+1, y_{2i})\}$ in $Z_{i+1}$ remains unresolved. 
    
    However, we have that the set of vertices we may add to our set to resolve $\zeta_{i+1}^u$, $R_{i+1}^u= \mathcal{R}(x_{2i-3}+1,x_{2i+1}, y_{2i-1}+1,y_{2i}-1)$, is empty, as $y_{2i} - y_{2i-1} = 1 \implies y_{2i-1}+1 > y_{2i}-1$. Then, we must have
    \begin{align*}
        \Res(\zeta_{i+1}^u) \subseteq  \bigcup_{0 \leq h \leq i} \left(\Res(\zeta_h^u) \cup \Res(\zeta_h^b) \right) = \bigcup_{0 \leq h \leq i}\left(R_h^u \cup R_h^b \right),
    \end{align*}
    with the last equality following from Lemma \ref{lem: induction}. However, our construction has already selected one vertex from each $R_h^u$ and $R_h^b$ for $0 \leq h \leq i$, none of which resolve $\zeta_{i+1}^u$. Thus, any additional vertex we add would be a second vertex in either $R_h^u$ or $R_h^b$ for some $0 \leq h \leq i$, contradicting Lemma \ref{lem: domination within R}. Thus, $Z_{i+1} = \varnothing$.

    Thus, $\hat{\mathcal{Q}}_{i+1}$ is resolving by Theorem \ref{thm: eliminating F_{<|1|}}. Since $\hat{\mathcal{Q}}_{i+1} \subseteq M$ and $M$ is minimal, $M = \hat{\mathcal{Q}}_{i+1}$, so $M$ must obey the terminal ordering as specified.
\end{proof}
In the construction, the choices for $Q_{i+1} = \{v_{2i+1}, v_{2i+2}\}$ described in the preceding lemmas are excluded. Together, these restrictions complete Construction \ref{construction}. 

\begin{theorem} \label{thm: soundness}
    Let $m, n \geq 3$. If a run of Construction \ref{construction} terminates after the selection of $Q_i$, then $\hat{\mathcal{Q}}_i$ is a minimal resolving set of $P_m \square P_n$ of size at least $4$.
\end{theorem}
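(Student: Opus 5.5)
Resolvability is immediate and I would dispose of it first. The run terminates after $Q_i$ exactly when $Z_i=\varnothing$, so $\hat{\mathcal{Q}}_i$ resolves every pair of the $F_{=1}$ form. It contains the opposite-side boundary vertices $(p,n-1),(q,0)$, so Theorem \ref{thm: eliminating F_{<|1|}} shows that $\hat{\mathcal{Q}}_i$ resolves $P_m\square P_n$. For the size bound, $Z_0=\mathcal{Z}(p,q,0,n-1)\neq\varnothing$ since $p<q$ and $n\ge 3$. Hence $i\ge 1$ and $|\hat{\mathcal{Q}}_i|=2i+2\ge 4$, because the selected vertices are distinct: the regions are disjoint for $i>0$ by Lemma \ref{lem: induction}, and each $R_h^\sigma$ is disjoint from $U_h$.

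The substance is minimality. For each selected vertex $s$ I will exhibit a pair that $s$ resolves uniquely within $\hat{\mathcal{Q}}_i$. The natural witnesses are the distinguished pairs. For $0\le h<i$ and $\sigma\in\{u,b\}$, let $s$ be the vertex chosen from $R_h^\sigma$. I claim $\hat{\mathcal{Q}}_i\cap\Res(\zeta_h^\sigma)=\{s\}$.

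\begin{itemize}
\item Vertices chosen at stages $h'<h$ lie in $U_h$, so they do not resolve $\zeta_h^\sigma\in Z_h$. This follows from the definition of $Z_h$ as the set of pairs unresolved by $\hat{\mathcal{Q}}_h$.
\item Vertices chosen at stages $h'>h$ lie in $R_{h'}^{\pm}$, which by definition excludes $\Res(\zeta_h^u)\cup\Res(\zeta_h^b)$.
\item The partner vertex chosen at stage $h$ lies in $R_h^{\bar\sigma}$. I must check that $R_h^{\bar\sigma}\cap\Res(\zeta_h^\sigma)=\varnothing$. For $h\ge1$ this should follow from the explicit rectangles in Lemma \ref{lem: induction} together with Lemma \ref{lem: char res vertices slope 1}. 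For example, at odd $h$, $R_h^b$ lies at heights $\le y_h^-<y_h^+$ with $x\in(x_h^-,x_h^+)$, and such vertices fall in the nonresolving set of $\zeta_h^u$, namely $\mathcal{R}(x_h^-+1,m-1,0,y_h^+-1)$.
\end{itemize}

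This accounts for every vertex except $v_{-1},v_0$ and the stage-$0$ choices $v_1,v_2$. It uses the fact that stage $i-1$ is the last stage at which a pair is chosen, and it needs only stages $h\le i-1$.

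The remaining cases are handled by the boundary and $Q_1$ restrictions. This is where I expect the main obstacle. At stage $0$, $R_0^u$ and $R_0^b$ overlap: for instance $(0,0)$-type and $(m-1,\cdot)$-type vertices resolve both $\zeta_0^u$ and $\zeta_0^b$. So $v_1$ might resolve $\zeta_0^u$ as well, destroying the uniqueness of $v_2$. By Lemma \ref{lem: char res vertices slope 1} and the boundary restriction $y\in[1,n-2]$, the admissible part of $R_0^u\cap R_0^b$ is the set of vertices with $x\le p$, $x\ge q$; this is empty. It is also the set of vertices with $(x\ge q$ and $y\le n-2$, $x\le p)$, which is impossible. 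Rechecking directly, $R_0^u$ restricted to the admissible heights is $\{x\le p\}$ and $R_0^b$ is $\{x\ge q\}$, and these are disjoint. Hence $v_1$ and $v_2$ uniquely resolve $\zeta_0^b$ and $\zeta_0^u$ respectively, using the same later-stage argument as above.

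For $v_{-1}$ and $v_0$, I will use pairs of the $F_{<|1|}$ form or pairs outside $\mathcal{Z}(p,q,0,n-1)$. Removing $v_0$ leaves $v_{-1}$ together with vertices of heights in $[1,n-2]$. I would show that some pair unresolved by the rest is resolved by $v_0$, such as $\{(q,0),(q+1,1)\}$ when $q<m-1$, or a horizontally adjacent pair on the bottom row. The later vertices have $x$-coordinates in the region determined by $R_0^b$, which is exactly where the $Q_1$ restrictions become necessary. The case $x_1=m-1$, $x_2=0$ is handled by Lemma \ref{lem: perpendicular pair restrictions}: there, $y_1>y_2$ and the absence of corners give minimality, and otherwise $Q_1\cup\dots\cup Q_i$ would resolve. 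The corner cases follow from the $Q_1$ restrictions together with Proposition 1 and Theorem 2 of \cite{andersen2016minimum}.

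I expect this case analysis for $v_{-1}$ and $v_0$ to be the delicate part. My first step would be to apply the rotation $T$ from Lemma \ref{lem: perpendicular pair restrictions}, viewing $v_1,v_2$ as a new opposite-boundary-like pair whenever they lie on the vertical sides. Otherwise I would use the explicit pair $\{(q-1,0),(q,1)\}$ (respectively $\{(p,n-2),(p+1,n-1)\}$ for $v_{-1}$), or its horizontal reflection, checking against the region formulas that no later vertex resolves it.
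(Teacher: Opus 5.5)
Your treatment of resolvability, and your proof that each of $v_1,v_2,\dots$ is the unique resolver of its distinguished pair $\zeta_h^\sigma$, match the paper's argument. That includes your use of the boundary restriction to make the admissible parts of $R_0^u$ and $R_0^b$ disjoint.

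\textbf{The gap: minimality of $v_{-1}$ and $v_0$ outside the perpendicular case.}

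\emph{Your proposed witnesses cannot work.}
\begin{itemize}
\item The pairs $\{(q-1,0),(q,1)\}$ and $\{(p,n-2),(p+1,n-1)\}$ are exactly $\zeta_0^b$ and $\zeta_0^u$. These lie in $Z_0$, so neither vertex of $Q_0$ resolves them.
\item Every $F_{<|1|}$ pair is likewise unresolved by $Q_0$, by Lemma \ref{lem: char nonres pairs}.
\item The pair $\{(q,0),(q+1,1)\}$ is also resolved by $v_1$ whenever $x_1>q$.
\item The reflected pair $\{(q-1,n-1),(q,n-2)\}$ is resolved by $v_{-1}$ and by $v_1$ as well.
\end{itemize}

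\emph{The appeal to Andersen et al.\ is too weak.} Proposition 1 and Theorem 2 only exclude resolving subsets of size $2$ or $3$. That settles minimality only when $|\hat{\mathcal{Q}}_i|=4$. For larger outputs you need an explicit pair resolved by $v_{-1}$ alone, and another resolved by $v_0$ alone.

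\textbf{The missing idea: find a vertical side with no selected vertex.} First, every vertex chosen after $Q_1$ lies in $[p+1,q-1]\times[1,n-2]$, so it is interior. Second, the $Q_1$ restrictions guarantee that, outside the perpendicular case $x_1=m-1$, $x_2=0$, one of the sides $x=0$ or $x=m-1$ contains no selected vertex at all, including $v_{-1}$ and $v_0$. The perpendicular case terminates as a $4$-minimal by Lemma \ref{lem: perpendicular pair restrictions}, as you note. The remaining cases are:
\begin{itemize}
\item If neither vertex of $Q_0$ is a corner, then $1\le p$ and $q\le m-2$. Only $v_1$ can lie on $x=m-1$ and only $v_2$ on $x=0$, and not both.
\item If $v_{-1}=(0,n-1)$, the restriction $x_1\neq m-1$ leaves $x=m-1$ empty.
\item If $v_0=(m-1,0)$, the restriction $x_2\neq 0$ leaves $x=0$ empty.
\end{itemize}

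\textbf{The corner pairs then serve as witnesses.} Suppose $x=0$ is free.
\begin{itemize}
\item The pair $\{(0,n-2),(1,n-1)\}$ is resolved exactly by vertices with $x=0,\ y\le n-2$ or with $x\ge 1,\ y=n-1$. Among selected vertices, only $v_{-1}$ qualifies.
\item The pair $\{(0,1),(1,0)\}$ is resolved exactly by vertices with $x=0,\ y\ge 1$ or with $x\ge1,\ y=0$. Among selected vertices, only $v_0$ qualifies.
\end{itemize}
If instead $x=m-1$ is free, use the mirror pairs $\{(m-2,n-1),(m-1,n-2)\}$ and $\{(m-2,0),(m-1,1)\}$.

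Pairs near $v_{-1}$ or $v_0$ do not work; these corner pairs, anchored on an empty side, are what the argument needs.
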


\begin{proof}
    Since the construction terminates when $Z_i = \varnothing$ and $Z_i$ is the set of all pairs of vertices of the $F_{=1}$ form that are not resolved by $\hat{\mathcal{Q}}_i$, when $Z_i = \varnothing$, then $\hat{\mathcal{Q}}_i$ resolves all pairs of vertices of the $F_{=1}$ form. Thus, by Theorem \ref{thm: eliminating F_{<|1|}}, $\hat{\mathcal{Q}}_i$ resolves all pairs of vertices, so $\hat{\mathcal{Q}}_i$ must be a resolving set upon termination.

    It remains to show minimality for $\hat{\mathcal{Q}}_i$ by showing that every vertex uniquely resolves some pair of vertices.

    For each $Q_j = \{v_{2j-1}, v_{2j}\}$ where $1 \leq j \leq i$, by construction, one vertex resolves $\zeta_{j-1}^u$ and the other resolves $\zeta_{j-1}^b$. We show that $\zeta_{j-1}^u$ and $\zeta_{j-1}^b$ are uniquely resolved by these vertices.
    
    By construction, no vertex selected after $Q_j$ may resolve $\zeta_{j-1}^u$ or $\zeta_{j-1}^b$. No vertices in earlier pairs may resolve $\zeta_{j-1}^u$ or $\zeta_{j-1}^b$, as this contradicts the definition of $Z_{j-1}$. 
    
    Finally, since $v_1 \in R_0^b$ and $1 \leq y_1 \leq n-2$, $v_1$ does not resolve $\zeta_0^u$ by Lemma \ref{lem: char res vertices slope 1}, and likewise, $v_2$ does not resolve $\zeta_0^b$. From Lemma \ref{lem: char res vertices slope 1}, for all $h \geq 1$, all vertices in $R_h^u$ do not resolve $\zeta_h^b$ and all vertices in $R_h^b$ do not resolve $\zeta_h^u$, so vertices within each pair cannot resolve both $\zeta_{j-1}^u$ and $\zeta_{j-1}^b$.
    
    For $Q_0$, by Lemma \ref{lem: perpendicular pair restrictions}, we have two cases. For our first case, we have that $Q_1$ contains two boundary vertices perpendicular to $Q_0$ and that $\hat{\mathcal{Q}}_1$ forms a minimal resolving set, as by Lemma \ref{lem: perpendicular pair restrictions}, $Z_1 = \varnothing$ so the construction terminates after $Q_1$.

    For our second case, if $Q_0$ contains no corner vertices, the restrictions in Construction \ref{construction}  ensure that $Q_1$ contains at most one vertex on the two vertical sides. If $Q_0$ contains $(0, n-1)$, the $Q_1$ restrictions force $x_1 \neq m-1$, so $x = m-1$ is empty. If $Q_0$ contains $(m-1, 0)$, the $Q_1$ restrictions force $x_2 \neq 0$, so $x = 0$ is empty. By the region formulas in Lemma \ref{lem: induction}, every vertex selected after $Q_1$ is interior. Hence at least one of the edges $x=0$ and $x = m-1$ contains no selected vertex.

    If $\hat{\mathcal{Q}}_i$ does not contain a vertex on $x = 0$, then $v_{-1}$ uniquely resolves the pair $\{(0, n-2),(1, n-1)\}$ and $v_0$ uniquely resolves the pair $\{(0, 1),(1,0)\}$. We may check by Lemma \ref{lem: char res vertices slope 1} together with its image under the horizontal reflection $T(x,y) = (x, n-1-y)$ that $Q_1$ does not resolve either pair, $v_{-1}$ does not resolve $\{(0, 1),(1,0)\}$, and $v_0$ does not resolve $\{(0, n-2),(1, n-1)\}$.

    If $\hat{\mathcal{Q}}_i$ does not contain a vertex on $x = m-1$, then $v_{-1}$ uniquely resolves the pair $\{(m-2, n-1),(m-1, n-2)\}$ and $v_0$ uniquely resolves the pair $\{(m-2, 0),(m-1, 1)\}$. We may check by Lemma \ref{lem: char res vertices slope 1} together with its image under the horizontal reflection $T(x,y) = (x, n-1-y)$ that $Q_1$ does not resolve either pair, $v_{-1}$ does not resolve $\{(m-2, 0),(m-1, 1)\}$, and $v_0$ does not resolve $\{(m-2, n-1),(m-1, n-2)\}$.

    The stated pairs are unresolved by $Q_2 \cup \dots \cup Q_i$ because all vertices in $Q_2 \cup \dots \cup Q_i$ are interior, and by Lemma \ref{lem: char res vertices slope 1} together with its image under the horizontal reflection $T(x,y) = (x, n-1-y)$, the only vertices that resolve the stated pairs are boundary vertices. Therefore, every set of vertices $\hat{\mathcal{Q}}_i$ generated by the construction must be a minimal resolving set.
\end{proof}

\begin{theorem} \label{thm: completeness}
    The construction produces every minimal resolving set of $P_m \square P_n$ with $m,n\geq 3$ of size at least $4$.
\end{theorem}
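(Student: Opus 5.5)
Let $M$ be an arbitrary minimal resolving set of $P_m \square P_n$ with $|M| \geq 4$. The plan is to show, by induction on the stage $i$, that $M$ is the output $\hat{\mathcal{Q}}_i$ of some admissible run of Construction \ref{construction}.

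\textbf{Initial pair.} By Proposition $2$ of \cite{andersen2016minimum}, $M$ contains two opposite-side boundary vertices. After applying a symmetry of the grid (a rotation or reflection), we may assume these are $v_{-1}=(p,n-1)$ and $v_0=(q,0)$ with $p<q$. The case $p=q$ is excluded, since then $\{v_{-1},v_0\}$ together with any other vertex would fall under the $3$-minimal or $2$-minimal characterizations, contradicting minimality. By Proposition $1$ of \cite{andersen2016minimum}, at most one of $v_{-1},v_0$ is a corner. Set $Q_0=\{v_{-1},v_0\}$, so that $Z_0=\mathcal{Z}(p,q,0,n-1)$ by Lemma \ref{lem: char nonres pairs}.

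\textbf{Inductive step.} Suppose $M \supseteq \hat{\mathcal{Q}}_i$ for an admissible prefix, and suppose $Z_i \neq \varnothing$. Since $M$ is resolving, it contains a resolver of $\zeta_i^u$ and a resolver of $\zeta_i^b$.

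1. Every vertex of $\hat{\mathcal{Q}}_i$ fails to resolve both $\zeta_i^u$ and $\zeta_i^b$, because these pairs lie in $Z_i$.
2. Any vertex of $M\setminus\hat{\mathcal{Q}}_i$ lying in $U_i=\bigcup_{h<i}(R_h^u\cup R_h^b)$ would be a second element of $M$ in some $R_h^\sigma$. Lemma \ref{lem: domination within R} forbids this.
3. Hence $M$ contains a vertex of $\Res(\zeta_i^u)\setminus U_i = R_i^u$ and a vertex of $R_i^b$. By Lemma \ref{lem: domination within R}, each is unique.
4. These two vertices are distinct, because $R_i^u$ and $R_i^b$ are disjoint for $i>0$ by Lemma \ref{lem: induction}. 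For $i=0$, a single vertex in $R_0^u\cap R_0^b$ would be a boundary vertex or would collapse the set, and Lemma \ref{lem: cutting of R_0} together with a short check rules this out.
5. Label the two vertices $v_{2i+1},v_{2i+2}$ according to the parity convention of the construction.
6. Lemma \ref{lem: cutting of R_0} gives the boundary restriction $y_\iota\in[1,n-2]$.
7. The forced-termination restrictions follow from Lemma \ref{lem: forcing termination to hold}.
8. The $Q_1$ restrictions follow from Lemma \ref{lem: perpendicular pair restrictions}. If $x_1=m-1$ and $x_2=0$, minimality forces $y_1>y_2$ and forbids corners in $Q_0$. If instead $v_{-1}=(0,n-1)$ and $x_1=m-1$, then $\{v_{-1},v_1\}$ forms a configuration containing a $3$-minimal via Theorem $2$ of \cite{andersen2016minimum}, and symmetrically for $v_0=(m-1,0)$ with $x_2=0$.

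Thus $\hat{\mathcal{Q}}_{i+1}\subseteq M$ is again an admissible prefix.

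\textbf{Conclusion.} By Theorem \ref{thm: termination} (whose argument is the strict decrease of $d_x^i$ or $d_y^i$, and applies to any region-valid prefix), some stage $j$ has $Z_j=\varnothing$. Then $\hat{\mathcal{Q}}_j\subseteq M$ resolves all $F_{=1}$ pairs. By Theorem \ref{thm: eliminating F_{<|1|}} it is resolving, so minimality gives $M=\hat{\mathcal{Q}}_j$. Hence $M$ is an output of the construction.

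\textbf{Main obstacle.} I expect the hardest part to be step 2 together with the $Q_1$ corner cases in step 8. Step 2 must rule out vertices of $M$ that resolve $\zeta_i^\sigma$ from inside $U_i$. This needs Lemma \ref{lem: domination within R} in the form that a second vertex in $R_h^\sigma$ is dominated, which must be rechecked relative to the current, smaller $Z_h$. The corner cases require invoking the $3$-minimal characterization carefully. I would first verify these domination inequalities directly from Lemma \ref{lem: active-rectangle-update}.
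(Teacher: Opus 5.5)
Your proposal is correct and follows the paper's proof essentially step for step. You orient an opposite-side boundary pair as $Q_0$, use Lemma~\ref{lem: domination within R} together with $U_i=\bigcup_{h<i}(R_h^u\cup R_h^b)$ to force exactly one vertex of $M$ into each of $R_i^u$ and $R_i^b$, apply Lemma~\ref{lem: forcing termination to hold} for the terminal orderings, and conclude $M=\hat{\mathcal{Q}}_j$ by minimality. The only difference is that you get termination from the argument of Theorem~\ref{thm: termination}, whereas the paper gets it simply from finiteness of $M$.

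Step 8 needs one repair. Lemma~\ref{lem: perpendicular pair restrictions} assumes the completed prefix is resolving, so the $Q_1$ restrictions should be checked after termination, as the paper does, with the induction run over region-valid prefixes. Your separate corner argument in step 8 is also unnecessary and not quite right. Since $p=0$ forces $x_2=0$, that case lies inside the perpendicular case, and for $y_1<y_2$ non-minimality comes from $Q_1\cup\dots\cup Q_j$ being resolving, not from a $3$-minimal subset.
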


\begin{proof}
    We will show that every minimal resolving set of $P_m \square P_n$ of size at least $4$ must be generated through Construction \ref{construction}.

    Choose any minimal resolving set of $P_m \square P_n$ of size at least $4$, and call it $M$. By Proposition $2$ in \cite{andersen2016minimum}, $M$ has at least one pair of opposite-side boundary vertices. Since $|M| \geq 4$, these vertices do not lie in the same row or column, as if they did, then $M$ would not be a minimal resolving set, as the vertices would form a resolving set of size $3$ with a third vertex in $M$ not in the same row or column. Such a third vertex must exist in $M$ since by Lemma $5$ in \cite{andersen2016minimum}, no minimal resolving set may contain $3$ vertices in the same row or column. By Proposition $1$ of \cite{andersen2016minimum}, $M$ contains at most $1$ corner vertex, so the third vertex together with the opposite-side vertex pair forms a $3$-minimal by Theorem $2$ of \cite{andersen2016minimum}, contradicting minimality.
    
    We may choose one of these pairs and after applying a rotation or reflection as necessary we may write them as $v_{-1} = (p, n-1)$ and $v_0 = (q, 0)$ with $q > p$. Then by Lemma \ref{lem: char nonres pairs}, the set of all pairs of vertices of the $F_{=1}$ form unresolved by $Q_0 = \{v_{-1}, v_0\}$ is $Z_0 = \mathcal{Z}(p,q,0,n-1)$.
    
    Then, we find that $M$ must contain some vertices that resolve $\zeta_0^u$ and $\zeta_0^b$. By Lemma \ref{lem: cutting of R_0}, these vertices cannot be on horizontal sides of the grid, so the sets of vertices that achieve this for $\zeta_0^u$ and $\zeta_0^b$ are $\mathcal{R}(0, p, 1, n-2)$ and $\mathcal{R}(q, m-1, 1, n-2)$ respectively. 
    
    The sets $\mathcal{R}(0, p, 1, n-2)$ and $\mathcal{R}(q, m-1, 1, n-2)$ are disjoint, so $M$ must contain at least one vertex from each. By Lemma \ref{lem: domination within R} with the disjoint sets, $M$ cannot contain more than one vertex from either. Thus, $M$ must contain exactly one vertex from $R_0^b$ and one vertex from $R_0^u$ satisfying the restrictions from Lemma \ref{lem: cutting of R_0}. We call these vertices $v_1$ and $v_2$ respectively, so we now have that $\hat{\mathcal{Q}}_1 \subseteq M$, where $Q_1 = \{v_1,v_2\}$.

    If $q - p = 1$, then Lemma \ref{lem: forcing termination to hold} applied at stage $i = 0$ gives $y_2 \leq y_1$, so our selected vertices must satisfy the coordinate inequalities. Consequently, $Z_1 = \varnothing$.

    Now, suppose that for some $i \geq 1$, we have selected some subset of $M$, $\hat{\mathcal{Q}}_i$.
    
    If $Z_i = \varnothing$, then $\hat{\mathcal{Q}}_i$ resolves all pairs of vertices of $P_m \square P_n$ and the construction terminates. By definition, $\hat{\mathcal{Q}}_i$ is a resolving set of $P_m \square P_n$. However, since $\hat{\mathcal{Q}}_i \subseteq M$ and $M$ is a minimal resolving set, we must have that $\hat{\mathcal{Q}}_i = M$, as otherwise, $M$ contains a proper subset of vertices that resolves $P_m \square P_n$, which contradicts minimality of $M$.
    
    If $Z_i \neq \varnothing$, by definition, $Z_i$ contains two pairs of vertices $\zeta_i^u$ and $\zeta_i^b$ that are unresolved by $\hat{\mathcal{Q}}_i$. Therefore, $M$ must contain additional vertices in $M \setminus \left(\hat{\mathcal{Q}}_i\right)$ that resolve $\zeta_i^u$ and $\zeta_i^b$. By Lemma \ref{lem: domination within R}, since $M$ contains one vertex in each $R_h^u$ and $R_h^b$ for $0 \leq h < i$, $M$ cannot contain any additional vertices in
    \begin{equation*}
        \bigcup_{0 \leq h <i} \left(R_h^u \cup R_h^b\right).
    \end{equation*}
    Therefore, by the final identity in Lemma \ref{lem: induction}, $M$ must contain at least one vertex in 
    \begin{equation*}
        \Res(\zeta_i^u) \setminus \bigcup_{0 \leq h <i} \left(R_h^u \cup R_h^b\right) = R_i^u
    \end{equation*}
    and one vertex in
    \begin{equation*}
        \Res(\zeta_i^b) \setminus \bigcup_{0 \leq h <i} \left(R_h^u \cup R_h^b\right) = R_i^b.
    \end{equation*}
    Since $R_i^u$ and $R_i^b$ are disjoint for all $i>0$ and by Lemma \ref{lem: domination within R} $M$ cannot contain more than one vertex in each region, $M$ must therefore contain one vertex in $R_i^u$ to resolve $\zeta_i^u$ and one vertex in $R_i^b$ to resolve $\zeta_i^b$.

    If $i$ is odd, then assign
    \begin{equation*}
        v_{2i+1} \in (M \setminus \left(\hat{\mathcal{Q}}_i\right)) \cap R_i^u,
        \qquad
        v_{2i+2} \in (M \setminus \left(\hat{\mathcal{Q}}_i\right)) \cap R_i^b.
    \end{equation*}
    If $i$ is even, then assign
    \begin{equation*}
        v_{2i+1} \in (M \setminus \left(\hat{\mathcal{Q}}_i\right)) \cap R_i^b,
        \qquad
        v_{2i+2} \in (M \setminus \left(\hat{\mathcal{Q}}_i\right)) \cap R_i^u.
    \end{equation*}
    Let $Q_{i+1} = \{v_{2i+1}, v_{2i+2}\}$, so $\hat{\mathcal{Q}}_{i+1} \subseteq M$. If the applicable bound difference of $Z_i$ is $1$, then by Lemma \ref{lem: forcing termination to hold}, the selected vertices $v_{2i+1}$ and $v_{2i+2}$ must satisfy the corresponding coordinate inequality, as otherwise, by Lemma \ref{lem: forcing termination to hold}, $M$ is not a minimal resolving set.
    
    We repeat with the next index, $i+1$. 
    
    Suppose that at every stage $i$, $\mathcal{Z}_i \neq \varnothing$. Then, the preceding argument gives two new vertices from $M \setminus \hat{\mathcal{Q}}_i$ at every stage, implying that $M$ contains infinitely many distinct vertices, which contradicts that $M \subseteq V_{m,n}$ is finite. Thus, we must have $\mathcal{Z}_i = \varnothing$ for some $i$, at which point the construction terminates and outputs
    \begin{equation*}
        \hat{\mathcal{Q}}_i = M,
    \end{equation*}
    as $\hat{\mathcal{Q}}_i$ resolves the grid and is a subset of the minimal resolving set $M$. By Lemma \ref{lem: perpendicular pair restrictions}, if the pair of vertices $Q_1$ does not satisfy the $Q_1$ restrictions of Construction \ref{construction}, then $M$ is not a minimal resolving set. This contradicts our assumption that $M$ is a minimal resolving set, so the pair of vertices $Q_1$ must obey the construction's restrictions. Therefore, the construction produces the chosen minimal resolving set $M$ of $P_m \square P_n$.
\end{proof}

Therefore, by Theorems \ref{thm: soundness} and \ref{thm: completeness}, the construction generates exactly the minimal resolving sets of $P_m \square P_n$ of cardinality at least $4$. The following corollary states a fact about opposite-side boundary vertices.
\begin{corollary} \label{coro: unique opposite side boundary vertex pair}
    Every minimal resolving set of the grid $P_m \square P_n$ of cardinality greater than $4$ contains a unique pair of boundary vertices on opposite sides.
\end{corollary}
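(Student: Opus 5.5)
Existence of such a pair is already known: the proof of Theorem \ref{thm: completeness} uses Proposition $2$ of \cite{andersen2016minimum} to find one. So the plan is to prove uniqueness. Let $M$ be a minimal resolving set with $|M|>4$. By Theorems \ref{thm: soundness} and \ref{thm: completeness}, after a symmetry of the grid, $M=\hat{\mathcal{Q}}_i$ for some terminating run of Construction \ref{construction} with $Q_0=\{(p,n-1),(q,0)\}$. Since $|M|>4$, the run does not stop after $Q_1$, so $i\geq 2$.

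First we determine every boundary vertex of $M$. By the boundary restriction, which rests on Lemma \ref{lem: cutting of R_0}, every $v_\iota$ with $\iota\geq1$ has $y_\iota\in[1,n-2]$. So the horizontal sides contain only $v_{-1}$ and $v_0$. By the region formulas of Lemma \ref{lem: induction}, every vertex chosen from $R_h^u$ or $R_h^b$ with $h\geq1$ has $x$-coordinate in $[x_{h-1}^-+1,\,x_{h-1}^+-1]\subseteq[p+1,q-1]$ or in $[x_h^-+1,x_h^+-1]$. Together with the boundary restriction, this makes every such vertex interior, which is the fact already used in the proof of Theorem \ref{thm: soundness}. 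Hence the only boundary vertices of $M$ are $v_{-1}$ and $v_0$ on the horizontal sides, and $v_1\in R_0^b$, $v_2\in R_0^u$, which may lie on $x=m-1$ and $x=0$ respectively.

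Next we show that $v_1$ and $v_2$ cannot both be boundary vertices. If $x_1=m-1$ and $x_2=0$, Lemma \ref{lem: perpendicular pair restrictions} applies. The case $y_1<y_2$ makes $Q_1\cup\dots\cup Q_i$ resolving, and the case $y_1=y_2$ gives a $3$-minimal subset, so both contradict minimality. The case $y_1>y_2$ gives $Z_1=\varnothing$, so the run stops at $|M|=4$, contradicting $|M|>4$. Therefore at most one vertex of $M$ lies on the vertical sides.

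It remains to rule out an opposite-side pair that mixes a horizontal and a vertical side. Opposite sides are either the two horizontal sides or the two vertical sides, so the only candidate pairs are $\{v_{-1},v_0\}$ and $\{v_1,v_2\}$ (when both lie on vertical sides), which the previous step excludes.

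The one delicate point is corners. A corner vertex lies on both a horizontal and a vertical side. If, say, $v_{-1}=(0,n-1)$ and $v_1$ lies on $x=m-1$, then $\{v_{-1},v_1\}$ is also an opposite-side pair, on the vertical sides. This is exactly what the $Q_1$ restrictions forbid: $v_{-1}=(0,n-1)$ forces $x_1\neq m-1$, and $v_0=(m-1,0)$ forces $x_2\neq0$. The degenerate case where both $v_{-1}$ and $v_0$ are corners is excluded by Proposition $1$ of \cite{andersen2016minimum}. If only one of them is a corner, say $v_{-1}=(0,n-1)$, it lies on $x=0$. The only other vertices on the vertical sides are $v_1$ on $x=m-1$, which is excluded, and possibly $v_2$ on $x=0$, which is the same side and so forms no opposite pair. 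So the corner cases are the main obstacle, and they are handled by carefully matching the $Q_1$ restrictions against each possible corner. The conclusion is that $\{v_{-1},v_0\}$ is the unique opposite-side boundary pair.
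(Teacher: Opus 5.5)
Your proposal is correct and follows the paper's argument. The steps match: Theorem \ref{thm: completeness} reduces to construction outputs, Lemma \ref{lem: cutting of R_0} clears the horizontal sides, the region formulas of Lemma \ref{lem: induction} make every vertex after $Q_1$ interior, and Lemma \ref{lem: perpendicular pair restrictions} rules out a second opposite-side pair inside $Q_1$ when $|M|>4$.

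Your explicit check of the corner cases against the $Q_1$ restrictions is a detail the paper leaves implicit. One wording slip: the intermediate claim ``at most one vertex of $M$ lies on the vertical sides'' is false when $Q_0$ contains a corner. For example, $v_{-1}=(0,n-1)$ and $v_2$ both lie on $x=0$. Your next paragraph handles exactly this case, so the argument is unaffected.
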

\begin{proof}
    By Theorem \ref{thm: completeness}, every minimal resolving set of cardinality at least $4$ is generated by Construction \ref{construction}. Lemma \ref{lem: cutting of R_0} excludes additional vertices on the two sides containing the initial pair $Q_0$. Lemma \ref{lem: perpendicular pair restrictions} shows that if $Q_1$ contains an opposite-side pair on the two perpendicular sides, then either minimality is broken or the construction terminates after $Q_1$, producing a $4$-minimal. 
    
    Finally, the bounds in Lemma \ref{lem: induction} show that every vertex selected after $Q_1$ is interior. Therefore, an output of cardinality greater than $4$ contains exactly one opposite-side boundary pair, namely $Q_0$.
\end{proof}
The following previously known corollary from Adar and Epstein \cite{adarepstein2016alg} regarding the cardinalities of $k$-minimals follows naturally from the construction.
\begin{corollary} \label{coro: even size}
    For a grid $P_m \square P_n$, there are no minimal resolving sets of odd cardinality greater than $3$.
\end{corollary}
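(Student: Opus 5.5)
The plan is to read the cardinality directly off Construction \ref{construction}, using that Theorems \ref{thm: soundness} and \ref{thm: completeness} make it an exact characterization for sizes at least $4$. Let $M$ be a minimal resolving set of $P_m \square P_n$ with $|M| \geq 4$. By Theorem \ref{thm: completeness}, some run of the construction outputs $M = \hat{\mathcal{Q}}_i = Q_0 \cup Q_1 \cup \dots \cup Q_i$. Theorem \ref{thm: termination} guarantees that this index $i$ is finite.

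The parity statement then reduces to showing that the sets $Q_j$ are pairwise disjoint and each has exactly two elements, which gives $|M| = 2(i+1)$.
\begin{itemize}
\item[(a)] $|Q_0| = 2$, since $v_{-1}$ and $v_0$ lie on opposite horizontal sides.
\item[(b)] For $j \geq 1$, $Q_j = \{v_{2j-1}, v_{2j}\}$ has its two vertices in $R_{j-1}^u$ and $R_{j-1}^b$ respectively.
\begin{itemize}
\item[] For $j-1 > 0$, these regions are disjoint by Lemma \ref{lem: induction}.
\item[] For $j-1 = 0$, the boundary restriction $y_1, y_2 \in [1,n-2]$ places $v_2 \in \mathcal{R}(0,p,1,n-2)$ and $v_1 \in \mathcal{R}(q,m-1,1,n-2)$. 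These are disjoint because $p<q$.
\end{itemize}
\item[(c)] Vertices of different stages are distinct. We have $v_{2j-1}, v_{2j} \in R_{j-1}^u \cup R_{j-1}^b$, and by definition this union avoids $U_{j-1} = \bigcup_{h<j-1}(R_h^u \cup R_h^b)$, which contains every earlier non-$Q_0$ vertex.
\item[(d)] No vertex of $Q_j$ with $j \geq 1$ equals $v_{-1}$ or $v_0$, by the boundary restriction.
\end{itemize}
Alternatively, for (c) one can note that every vertex chosen at stage $j$ resolves $\zeta_{j-1}^u$ or $\zeta_{j-1}^b$, while earlier vertices resolve neither, since these pairs lie in $Z_{j-1}$.

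It follows that every minimal resolving set of size at least $4$ has even cardinality. In particular, no minimal resolving set has odd cardinality at least $5$, which is the claim. For completeness one can also cite the bound $2\min(m,n)-2$ from \cite{andersen2016minimum}, though it is not needed here.

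The main obstacle is the disjointness in step (c). Lemma \ref{lem: induction} only asserts that $R_i^u$ and $R_i^b$ are disjoint within a single stage. Cross-stage distinctness has to come from the $\setminus U_i$ in the definition of the regions, together with the identity $U_i = \bigcup_{h<i}(R_h^u\cup R_h^b)$. I would state this explicitly, since an overlap there would break the count $2(i+1)$.
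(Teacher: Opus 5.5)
Your proposal is correct and follows the paper's route: you invoke Theorem \ref{thm: completeness} to write $M=\hat{\mathcal{Q}}_i$ and read off $|M|=2(i+1)$ from the pair structure of the construction. The paper's one-line proof leaves the pairwise distinctness of the selected vertices implicit; it is verified separately in Lemma \ref{lem: distinctness of selected vertices}. Your steps (a)--(d) make that point explicit, using the boundary restriction to separate $R_0^u$ from $R_0^b$ and the $\setminus U_{j-1}$ in the region definitions for cross-stage distinctness.
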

\begin{proof}
    By Theorem \ref{thm: completeness}, every $k$-minimal with $k \geq 4$ is generated by Construction \ref{construction}. The construction generates $k$-minimals using pairs of vertices, so it follows that there are no $k$-minimals of odd cardinality with $k > 3$.
\end{proof}

\section{Enumerating k-Minimals} \label{section: Enumeration}
From Corollary \ref{coro: even size}, all $k$-minimals of a grid $P_m \square P_n$ where $k > 3$ have even cardinality. In this section, we enumerate all $k$-minimals. Throughout, we use the convention that $\binom ab=0$ whenever $a<b$ or $b<0$. Let $N_k(m,n)$ denote the number of $k$-minimals of the grid $P_m \square P_n$.

The smallest possible value of $k$ is $2$, as this is the metric dimension of $P_m \square P_n$. All $2$-minimals must therefore be minimum resolving sets, and it is well-known for grids that these are exactly the $4$ pairs of two corner vertices sharing a side \cite{melter1984metric}.

\subsection{Enumerating 3-Minimals} \label{section: size 3 derivation}
For $P_m \square P_n$, the $3$-minimals are characterized by Theorem $2$ of Andersen et al. \cite{andersen2016minimum} to be all subsets $M$ of $3$ vertices of $P_m \square P_n$ such that
\begin{enumerate}
    \item $M$ has no more than one corner vertex.
    \item $M$ contains two boundary vertices on opposite sides.
    \item $M$ either has two vertices on the same line, $(i, j)$ and $(k, j)$ where $i \neq k$ and a third vertex $(p, q)$ where $i \leq p \leq k$ and $q \neq j$, or two vertices on the same line, $(i, j)$ and $(i, k)$ where $j \neq k$ and a third vertex $(p, q)$ where $j \leq q \leq k$ and $p \neq i$.
\end{enumerate}
We consider two cases of $3$-minimals. Either the $3$-minimal contains two opposite-side boundary vertices on the same line or it does not. First, suppose the opposite-side boundary vertices are on the same line, and by symmetry suppose they are vertical. There are $m-2$ ways for them to be vertical and non-corner vertices, then $(m-1)n$ ways to select the third vertex for a total of $(m-2)(m-1)n$ $3$-minimals. Likewise, there are $(n-2)(n-1)m$ such $3$-minimals whose opposite-side boundary vertices lie on the same horizontal line.

Now, suppose the $3$-minimal does not contain two opposite-side boundary vertices on the same line, and by symmetry suppose the two boundary vertices are on the horizontal sides of the grid. The third vertex must also be on one of the horizontal sides of the grid. There are $\binom{m}{3}$ ways to choose the $x$-coordinates of the vertices and $2$ ways to choose which side has two vertices. We subtract $2(m-2)$ for the cases with $2$ or more corner vertices, for a total of $2\binom{m}{3} - 2(m-2)$ $3$-minimals with two boundary vertices on the horizontal sides of the grid not on the same line. Likewise, there are $2\binom{n}{3}-2(n-2)$ such $3$-minimals with boundary vertices on the vertical sides of the grid.

The horizontal and vertical classes counted above are disjoint. If a  $3$-minimal belonged to both a horizontal and a vertical class, it would contain a horizontal opposite-side pair and a vertical opposite-side pair. As there are $3$ vertices, the two pairs would necessarily share a corner vertex, and Condition $3$ of Theorem $2$ of \cite{andersen2016minimum} forces one of the other vertices to also be a corner vertex, contradicting Condition $1$ of Theorem $2$ of \cite{andersen2016minimum}.

This yields a total of
\begin{equation*}
    2\binom{m}{3} + (n-2)(n-1)m + 2\binom{n}{3} + (m-2)(m-1)n - 2(m  + n - 4)
\end{equation*}
$3$-minimals of $P_m \square P_n$.

\subsection{Enumerating k-Minimals of All Even Cardinalities}
We call a run of Construction \ref{construction} admissible if it satisfies all restrictions of Construction \ref{construction}. An admissible initial pair is a choice of $Q_0$ permitted by Construction \ref{construction}.

We now count the construction runs of even cardinality $k = 2r$. We treat $r=2$ separately. For $r \geq 3$, a direct count would be cumbersome, as the regions from which successive vertex pairs are chosen are defined recursively. Our strategy is therefore to prove we may encode every admissible construction run by recording the horizontal and vertical coordinates of every pair of vertices subject to the Construction's inequalities.

First, fix an oriented initial pair $Q_0 = \{(p,n-1), (q, 0)\}$, where $p < q$. As the construction proceeds, every new pair added at each stage contributes horizontal and vertical coordinates that are subject to inequalities determined by the active rectangle's bounds. We record these inequalities in a horizontal profile $H_i$ and a vertical profile $V_i$. At a non-terminal stage of the construction, the two coordinates that become the new active bounds are inserted strictly between the previous bounds, and at the terminal stage, the final pair of coordinates appears in the opposite weak ordering. Thus, every run of Construction \ref{construction} can be encoded by two labeled mixed chains with entries satisfying $<$ and $\leq$ relations as prescribed by the construction.

For a fixed $Q_0$ and fixed horizontal coordinates of $Q_1$, Lemmas \ref{lem: recursive-profile invariant}, \ref{lem: bijection between collections}, and \ref{lem: distinctness of selected vertices} show that the horizontal and vertical profiles recover the recursive region-selection rules of Construction \ref{construction} and determine a valid labeled sequence of distinct vertices. Lemma \ref{lem: construction-run-injective} shows that for a fixed $Q_0$, distinct admissible construction runs produce distinct output vertex sets. Lemma \ref{lem: counting mixed chains} enumerates the number of horizontal and vertical profiles, which from Lemma \ref{lem: coordinate independence} may be combined independently.

Finally, we count the admissible choices of $Q_1$ in order to account for the initial restrictions of Construction \ref{construction}, multiply the two profile counts for a fixed $Q_0$, sum over all possible initial pairs $Q_0$, then sum over the four orientations of the construction to obtain our final enumeration. 

For the $r=2$ case, the construction terminates after $Q_1$ and therefore may be counted directly. The only additional class consists of $4$-minimals with two perpendicular pairs of opposite-side boundary vertices. By Lemma \ref{lem: perpendicular pair restrictions}, such outputs of Construction \ref{construction} with two perpendicular pairs of opposite-side boundary vertices must terminate at cardinality $4$.

\begin{definition} \label{def: labeled mixed chain}
    A labeled mixed chain is an ordered list $(c_1,c_2,\ldots,c_\ell)$ of labeled coordinate occurrences, together with a relation
    \begin{equation*}
        c_j\preceq_j c_{j+1}, \qquad \preceq_j\in \{\le,<\},
    \end{equation*}
    for every $1\leq j< \ell$.
\end{definition}

\begin{lemma} \label{lem: counting mixed chains}
    Let $A,B,L\in\mathbb Z$, where $A\leq B$ and $L\geq 1$. Let $S\subseteq \{1,2,\ldots,L-1\}$ and write $|S|=s$. The number of integer sequences $(a_1,a_2,\ldots,a_L)$ satisfying
    \begin{equation*}
        A \leq a_1 \leq a_2 \leq \dots \leq a_L\leq B, 
        \qquad 
        a_i<a_{i+1}
    \end{equation*}
    for every $i\in S$ is
    \begin{equation*}
        \binom{B-A+L-s}{L}.
    \end{equation*}
\end{lemma}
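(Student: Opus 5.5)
The plan is the standard "stars and bars with forced gaps" reduction: convert the strict inequalities into weak ones by a shift, then count weakly increasing sequences in an interval.

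\textbf{Step 1 (shift).} Given a sequence $(a_1,\dots,a_L)$ satisfying the constraints, define for each $i$ the number $\sigma(i)=|S\cap\{1,\dots,i-1\}|$ of strict steps occurring before position $i$. Set
\begin{equation*}
    b_i = a_i - \sigma(i).
\end{equation*}
Consider a step $i\to i+1$. If $i\in S$, then $a_{i+1}\geq a_i+1$ and $\sigma(i+1)=\sigma(i)+1$, so $b_{i+1}\geq b_i$. If $i\notin S$, then $a_{i+1}\geq a_i$ and $\sigma(i+1)=\sigma(i)$, so again $b_{i+1}\geq b_i$. At the ends, $b_1=a_1\geq A$ and $b_L=a_L-s\leq B-s$, since $\sigma(L)=s$ (all indices of $S$ are at most $L-1$).

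So $(b_i)$ is a weakly increasing sequence in $[A,B-s]$. Conversely, any weakly increasing $(b_i)$ in $[A,B-s]$ gives $a_i=b_i+\sigma(i)$, which satisfies every constraint, including $a_L\leq B$. The two maps are mutually inverse, so this is a bijection.

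\textbf{Step 2 (count weak chains).} Weakly increasing sequences of length $L$ with values in an interval of $N=B-s-A+1$ integers correspond to multisets of size $L$ from $N$ elements. Their number is
\begin{equation*}
    \binom{N+L-1}{L}=\binom{B-A+L-s}{L}.
\end{equation*}

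\textbf{Step 3 (degenerate ranges).} If $B-s<A$, there are no sequences. In that case $B-A+L-s<L$, and the binomial vanishes under the paper's convention $\binom ab=0$ for $a<b$. This requires $B-A+L-s\geq 0$; indeed it is at least $L-s\geq 1$ since $s\leq L-1$, so the convention applies. When $B-s\geq A$ the top entry is at least $L$ and the formula is the ordinary binomial coefficient.

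The only point needing care is the endpoint bookkeeping in Step 1, namely that the total shift at position $L$ is exactly $s$. The rest is the classical multiset count.
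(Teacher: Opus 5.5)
Your proof is correct and is the paper's argument essentially verbatim: the paper likewise subtracts $c_i=|\{j\in S \mid j<i\}|$ to turn the mixed chain into a weakly increasing $L$-tuple on $[A,B-s]$, counts these as multisets to get $\binom{B-A+L-s}{L}$, and handles $B-s<A$ through the zero convention. Your explicit check of the inverse map and of the endpoint shift $\sigma(L)=s$ is a little more careful than the paper's write-up, but the approach is the same.
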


\begin{proof}
    For every $1\leq i\leq L$, define $c_i = |\{j\in S \mid j <i\}|$. That is, $c_i$ is the number of strict comparisons occurring before the $i$-th entry. Now define $b_i=a_i-c_i$. We find that $(b_1, b_2, \dots, b_L)$ is a weakly increasing $L$-tuple on $[A, B-s]$. Note that if $B-s < A$, then $[A, B-s] = \varnothing$, so there are no such $L$-tuples and the stated binomial coefficient is $0$. Thus, assume $B-s \geq A$. For any such weakly increasing $L$-tuple on $[A, B-s]$, we may construct the sequence $(a_1, a_2, \dots, a_L)$ by adding $c_i$ to every $b_i$ in the weakly increasing $L$-tuple to obtain our labeled mixed chain of length $L$ on $[A,B]$. Therefore, the number of labeled mixed chains of length $L$ chosen from interval $[A,B]$ is
    \begin{equation*}
        \binom{(B-A-s+1)+L-1}{L} = \binom{B-A+L-s}{L}.
    \end{equation*}
\end{proof}

For every non-terminal stage $i\ge 1$, define the active horizontal
bounds by
\begin{equation*}
    (x_i^-,x_i^+) =
    \begin{cases}
        x_{2i-3},x_{2i-2}, \qquad i\text{ odd},\\
        x_{2i-1},x_{2i}, \qquad i\text{ even},
    \end{cases}
\end{equation*}
and define the active vertical bounds by
\begin{equation*}
(y_i^-,y_i^+) =
    \begin{cases}
    y_{2i-1},y_{2i}, \qquad i\text{ odd},\\
    y_{2i-3},y_{2i-2}, \qquad i\text{ even}.
    \end{cases}
\end{equation*}

By Lemma~\ref{lem: induction}, $Z_i\neq\varnothing$ if and only if $x_i^-<x_i^+$ and $y_i^-<y_i^+$. Active bounds are defined only at non-terminal stages. For $i=1$, define $H_1=\varnothing$ and $V_1=(y_1<y_2)$. For the remainder of the profile construction, suppose the run reaches $Q_2$, so $Z_1 \neq \varnothing$. The pair $Q_2={v_3,v_4}$ satisfies
\begin{equation*}
    p+1 \leq x_3,x_4 \leq q-1
    \qquad \text{and} \qquad
    1 \leq y_4 \leq y_1<y_2 \leq y_3 \leq n-2.
\end{equation*}
If $Z_2 \neq \varnothing$, then $H_2=(x_3<x_4)$, and if $Z_2 = \varnothing$, then $H_2=(x_4\leq x_3)$. In either case, we have
\begin{equation*}
    V_2=(y_4 \leq y_1<y_2 \leq y_3).
\end{equation*}

Suppose $ i\geq 3$ is odd and $Z_i \neq \varnothing$. By the definitions of $R_i^u$ and $R_i^b$, $x_i^- < x_{2i+1},x_{2i+2} < x_i^+$.

If $Z_{i+1} \neq \varnothing$, replace the adjacent active pair $x_i^-<x_i^+$ with $x_i^- < x_{2i+1} < x_{2i+2} < x_i^+$.

If $Z_{i+1}=\varnothing$, replace the adjacent active pair with $x_i^- < x_{2i+2} \leq x_{2i+1} < x_i^+$.

The vertical region inequalities are $y_{2i-5}+1 \leq y_{2i+2} \leq y_{2i-1}$ and $y_{2i} \leq y_{2i+1} \leq y_{2i-4}-1$. Equivalently,
\begin{equation*}
    y_{2i-5} < y_{2i+2} \leq y_{2i-1} < y_{2i} \leq y_{2i+1} < y_{2i-4}.
\end{equation*}

Thus, $y_{2i+2}$ is inserted immediately before $y_{2i-1}$, and $y_{2i+1}$ is inserted immediately after $y_{2i}$. Now suppose that $i \geq 2$ is even and $Z_i \neq \varnothing$. By the definitions of $R_i^u$ and $R_i^b$, we have $y_i^- < y_{2i+1}$ and $y_{2i+2} < y_i^+$.

If $Z_{i+1} \neq \varnothing$, replace the adjacent active pair $y_i^-<y_i^+$ with $y_i^- < y_{2i+1} < y_{2i+2} < y_i^+$.

If $Z_{i+1}=\varnothing$, replace the adjacent active pair with $y_i^- < y_{2i+2} \leq y_{2i+1} < y_i^+$.

The horizontal region inequalities are $x_{2i-5}+1 \leq x_{2i+2} \leq x_{2i-1}$ and $x_{2i} \leq x_{2i+1} \leq x_{2i-4}-1$. Equivalently,
\begin{equation*}
    x_{2i-5} < x_{2i+2} \leq x_{2i-1} < x_{2i} \leq x_{2i+1} < x_{2i-4}.
\end{equation*}
Thus $x_{2i+2}$ is inserted immediately before $x_{2i-1}$, and
$x_{2i+1}$ is inserted immediately after $x_{2i}$.

\begin{figure}[hbt!]
  \centering
  \begin{tikzpicture}[
      font=\small,
      profstage/.style={draw=black!28,rounded corners=2pt,fill=black!2,
        text width=0.88\textwidth,align=left,inner sep=7pt},
      flow/.style={-{Stealth[length=2.2mm]},semithick,black!65}
    ]
    \node[profstage] (S1) {
      \textbf{After $Q_1$}\par\smallskip
      $H_1=\varnothing,$\qquad
      $V_1=(\textcolor{pairgreen}{y_1}<
      \textcolor{pairgreen}{y_2})=(3<5)$.};

    \node[profstage,below=12mm of S1] (S2) {
      \textbf{After $Q_2$}\par\smallskip
      $H_2=(\underbrace{\textcolor{upperblue}{x_3}<
      \textcolor{upperblue}{x_4}}_{\text{active horizontal pair}})
      =(3<5)$, \qquad
      $V_2=(\textcolor{upperblue}{y_4}\le
      \underbrace{\textcolor{pairgreen}{y_1}<
      \textcolor{pairgreen}{y_2}}_{\text{active vertical pair}}
      \le\textcolor{upperblue}{y_3})=(2\le3<5\le6)$.};

    \node[profstage,below=12mm of S2] (S3) {
      \textbf{After terminal $Q_3$}\par\smallskip
      $H_3=(\textcolor{pairred}{x_6}\le
      \textcolor{upperblue}{x_3}<\textcolor{upperblue}{x_4}\le
      \textcolor{pairred}{x_5})=(3\le3<5\le5)$,\\[3mm]
      $V_3=(\textcolor{upperblue}{y_4}\le
      \textcolor{pairgreen}{y_1}<
      \underbrace{\textcolor{pairred}{y_6}\le
      \textcolor{pairred}{y_5}}_{\text{terminal weak reversal}}
      <\textcolor{pairgreen}{y_2}\le
      \textcolor{upperblue}{y_3})=(2\le3<4\le4<5\le6)$.};

    \draw[flow] (S1)--node[right,font=\scriptsize,fill=white,inner sep=1pt]
      {insert $Q_2$} (S2);
    \draw[flow] (S2)--node[right,font=\scriptsize,fill=white,inner sep=1pt]
      {insert $Q_3$} (S3);

    \node[below=5mm of S3,align=center,font=\scriptsize] {
      \textcolor{pairgreen}{Green}: coordinates of $Q_1$\qquad
      \textcolor{upperblue}{Blue}: coordinates of $Q_2$\qquad
      \textcolor{pairred}{Red}: coordinates of $Q_3$};
  \end{tikzpicture}
  \caption{Profile encoding for the construction run in
  Figure~\ref{fig:construction-run}.  At a non-terminal step, the two new
  active bounds enter in strict order.  At the terminal step, the new pair
  is inserted within the old active bounds in the opposite weak order, here
  $y_6\le y_5$, which records $Z_3=\varnothing$.}
  \label{fig:profiles}
\end{figure}

\begin{lemma} \label{lem: recursive-profile invariant}
    At stage $i \geq 2$, reached by the construction, the profiles $H_i$ and $V_i$ satisfy the following properties.
    
    \begin{enumerate}
        \item Every horizontal coordinate occurrence belonging to
        $Q_2,Q_3,\dots,Q_i$ appears exactly once in $H_i$.
        
        \item Every vertical coordinate occurrence belonging to $Q_1,Q_2,\ldots,Q_i$ appears exactly once in $V_i$.
        
        \item The active horizontal bounds occur as an adjacent strict pair in $H_i$, and the active vertical bounds occur as an adjacent strict pair in $V_i$.
        
        \item If $i \geq 3$ is odd, then the entries immediately outside the active vertical pair $y_{2i-1}<y_{2i}$ are $y_{2i-5}$ and $y_{2i-4}$.
        
        \item If $i=2$, the external bounds immediately outside the active horizontal pair $x_3 < x_4$ are $p = x_{-1}$ and $q = x_0$. If $i \geq 4$ is even, then the entries immediately outside the active horizontal pair $x_{2i-1} < x_{2i}$ are $x_{2i-5}$ and $x_{2i-4}$.

        \item The most recent insertion can be reversed uniquely, thereby recovering $H_{i-1}$ and $V_{i-1}$.
    \end{enumerate}
    Properties $1$, $2$, and $6$ hold at every stage with $i \geq 2$, including the terminal stage, while properties $3$, $4$, and $5$ hold only when $Z_i \neq \varnothing$.
\end{lemma}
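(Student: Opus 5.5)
We argue by induction on the stage $i\ge 2$, starting from the explicit profiles $H_2$ and $V_2$ and then applying the insertion rules described just before the lemma.

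\textbf{Base case $i=2$.} Here $H_2$ is $(x_3<x_4)$ or $(x_4\le x_3)$, and $V_2=(y_4\le y_1<y_2\le y_3)$. Properties 1 and 2 hold by inspection. When $Z_2\neq\varnothing$, Lemma~\ref{lem: induction} gives active bounds $(x_2^-,x_2^+)=(x_3,x_4)$ and $(y_2^-,y_2^+)=(y_1,y_2)$. These appear as adjacent strict pairs, which is Property 3. The region formulas give $p<x_3,x_4<q$, so $p=x_{-1}$ and $q=x_0$ are the external bounds, which is Property 5. Reversal to $H_1=\varnothing$ and $V_1=(y_1<y_2)$ deletes the $Q_2$ occurrences, which is Property 6.

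\textbf{Inductive step, $i$ odd.} Assume the properties at a non-terminal stage $i$, and let $i$ be odd; the even case is symmetric with the roles of $x$ and $y$ exchanged. The transition to $i+1$ does three things.
\begin{itemize}
\item It replaces the adjacent strict active pair $x_i^-<x_i^+$ in $H_i$ by $x_i^-<x_{2i+1}<x_{2i+2}<x_i^+$, or by the terminal reversal $x_i^-<x_{2i+2}\le x_{2i+1}<x_i^+$.
\item It inserts $y_{2i+2}$ immediately before $y_{2i-1}$ in $V_i$.
\item It inserts $y_{2i+1}$ immediately after $y_{2i}$ in $V_i$.
\end{itemize}
Each new coordinate occurrence is inserted once, and no old one is removed, so Properties 1 and 2 follow.

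For Properties 3 and 5 at the new even stage $i+1$, Corollary~\ref{coro: induction result of Z_i form} gives the new horizontal active pair $x_{2i+1}<x_{2i+2}$. Its neighbours are exactly the old active bounds $x_{2i-3},x_{2i-2}$, which are $x_{2(i+1)-5}$ and $x_{2(i+1)-4}$, as required. The vertical active pair $y_{2i-1}<y_{2i}$ is unchanged and stays adjacent, because the new $y$'s go outside it. The case $i+1=4$ should be checked separately so that the index formula matches $i=3$.

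Property 4 at odd stages requires that, after the even step, the vertical pair $y_{2i+1}<y_{2i+2}$ has neighbours $y_{2i-3},y_{2i-2}$. This holds because at an even stage the new $y$'s are inserted strictly inside the old active pair, and that pair was $(y_{2i-3},y_{2i-2})$.

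The inserted positions are justified by the region inequalities from Lemma~\ref{lem: induction}. For example,
\[
y_{2i-5}<y_{2i+2}\le y_{2i-1}<y_{2i}\le y_{2i+1}<y_{2i-4},
\]
and the induction hypothesis in Property 4 says that the entries adjacent to $y_{2i-1}$ and $y_{2i}$ are $y_{2i-5}$ and $y_{2i-4}$.

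\textbf{Reversal (Property 6).} This is the main obstacle, because it must hold at terminal stages where Properties 3–5 may fail. The most recent insertion consists of the four occurrences labelled by the indices of $Q_i$. The profiles are labelled chains, so these entries can be located by their labels. Deleting them restores the previous chains exactly, since insertion only placed new entries between existing adjacent entries and never reordered the old ones. The relations are restored as well. The old adjacent relation between the two neighbours of a deleted block is recovered from the parity of $i$, because active pairs are always strict and the relation between old neighbours was fixed at their own insertion. To make this precise, I would record that each insertion splits one old relation $c\preceq c'$ into a chain whose outer relations are determined. For example, $y_{2i-5}<y_{2i+2}$ together with $y_{2i+2}\le y_{2i-1}$ recombine to the old relation $y_{2i-5}<y_{2i-1}$. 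This strictness bookkeeping is the delicate point.
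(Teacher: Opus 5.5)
Your proposal is correct and follows essentially the same route as the paper. Both argue by induction on the stage, starting from the explicit $H_2,V_2$, place the new coordinates using the outer-neighbour property together with the region inequalities of Lemma~\ref{lem: induction}, and reverse the last step by deleting the labelled occurrences of the latest pair. You are slightly more careful than the paper in checking Property 5 at $i=2$ and in noting that every relation split by an insertion was strict, so deletion restores it uniquely; the separate check you propose for $i+1=4$ is unnecessary, since the odd-stage bounds $(x_{2i-3},x_{2i-2})$ at $i=3$ are already $(x_3,x_4)$.
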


\begin{proof}
    We proceed by induction on the stages $i \geq 2$ reached by the construction.
    
    At stage $i=2$, we have that $V_2=(y_4 \leq y_1<y_2\leq y_3)$ and 
    \begin{equation*}
        H_2=
        \begin{cases}
            (x_3<x_4) & Z_2 \neq \varnothing\\
            (x_4 \leq x_3) & Z_2 = \varnothing.
        \end{cases}
    \end{equation*}
    Every required coordinate occurrence appears exactly once. Deleting the occurrences belonging to $Q_2$ recovers $H_1 = \varnothing $ and $V_1 = (y_1 < y_2)$, so the most recent insertion is uniquely reversible.

    If $Z_2 \neq \varnothing$, then the active horizontal bounds $x_3 < x_4$ and the active vertical bounds $y_1 < y_2$ occur as adjacent strict pairs. Thus all properties applicable at stage $i=2$ hold.
    
    Assume that all assertions hold at a non-terminal stage $i \geq 2$. The next stage $i+1$ may be either terminal or non-terminal. 

    Suppose first that $i\geq 3$ is odd. In the horizontal profile, the active pair $x_{2i-3}<x_{2i-2}$ is replaced by $x_{2i-3}<x_{2i+1}<x_{2i+2}<x_{2i-2}$ if $Z_{i+1}\neq\varnothing$, and by $x_{2i-3}<x_{2i+2}\leq x_{2i+1}<x_{2i-2}$ if $Z_{i+1}=\varnothing$.

    In the vertical profile, the outer-neighbor property at stage $i$ and the recursive region inequalities give
    \begin{equation*}
        y_{2i-5} <y_{2i+2} \leq y_{2i-1} <y_{2i} \leq y_{2i+1}<y_{2i-4}.
    \end{equation*}
    Consequently, $y_{2i+2}$ and $y_{2i+1}$ have uniquely prescribed positions in $V_{i+1}$.
    
    It follows that every new coordinate occurrence appears exactly once. Since the occurrences are labeled, deleting the four occurrences belonging to $Q_{i+1}$ uniquely recovers $H_i$ and $V_i$.
    
    If $Z_{i+1}\neq\varnothing$, then $x_{2i+1}<x_{2i+2}$ is the new adjacent active horizontal pair, with $x_{2i-3}$ and $x_{2i-2}$ immediately outside it. The active vertical pair $y_{2i-1}<y_{2i}$ remains adjacent because the two new vertical occurrences were inserted immediately outside it. Thus all applicable assertions hold at stage $i+1$.
    
    Now suppose that $i$ is even. In the vertical profile, the active pair $y_{2i-3}<y_{2i-2}$ is replaced by $y_{2i-3}<y_{2i+1}<y_{2i+2}<y_{2i-2}$ if $Z_{i+1}\neq\varnothing$, and by $y_{2i-3}<y_{2i+2}\leq y_{2i+1}<y_{2i-2}$ if $Z_{i+1}=\varnothing$.
    
    If $i=2$, the recursive region inequalities give $p<x_6\leq x_3<x_4\leq x_5<q$. If $i\geq 4$, the outer-neighbor property at stage $i$ instead gives
    \begin{equation*}
        x_{2i-5} <x_{2i+2} \leq x_{2i-1} <x_{2i} \leq x_{2i+1}
        <x_{2i-4}.
    \end{equation*}
    Thus, in either case, the new horizontal coordinate occurrences have uniquely prescribed positions.
    
    Again, every new occurrence appears exactly once, and deleting the occurrences belonging to $Q_{i+1}$ uniquely recovers $H_i$ and $V_i$.
    
    If $Z_{i+1}\neq\varnothing$, then $y_{2i+1}<y_{2i+2}$ is the new adjacent active vertical pair, with $y_{2i-3}$ and
    $y_{2i-2}$ immediately outside it. The active horizontal pair $x_{2i-1}<x_{2i}$ remains adjacent. Hence all applicable assertions hold at stage $i+1$.
\end{proof}

\begin{lemma}
\label{lem: bijection between collections}
Fix $Q_0=\{(p,n-1),(q,0)\}$, where $p<q$, and fix $x_1\in[q,m-1]$, $x_2\in[0,p]$, and $(x_1,x_2)\neq(m-1,0)$. Fix $s\geq2$, prescribe that $Z_i\neq\varnothing$ for every $1 \leq i < s$, and prescribe whether $Z_s$ is empty or nonempty.

Recording $H_s,V_s$ gives a bijection between the following
collections:
\begin{enumerate}
\item labeled sequences $Q_1,Q_2,\ldots,Q_s$ with fixed horizontal coordinates $x_1,x_2$, satisfying the recursive region-selection and boundary rules and the prescribed status of $Z_s$, but not necessarily the additional forced-termination restrictions in
Construction \ref{construction};

\item pairs of integer realizations of the recursively defined
labeled mixed chains $H_s,V_s$, subject to $p+1\leq x_j\leq q-1$
for every entry of $H_s$, and $1\leq y_j\leq n-2$ for every entry of $V_s$.
\end{enumerate}
\end{lemma}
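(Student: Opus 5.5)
We define the map $\Phi$ sending a labeled sequence $Q_1,\dots,Q_s$ in collection 1 to its recorded pair $(H_s,V_s)$ of coordinate values, and prove by induction on $s$ that $\Phi$ is well defined into collection 2, injective, and surjective. The whole argument rests on one observation, which we verify first. Given the fixed $Q_0$ and fixed $x_1,x_2$, the region-selection and boundary rules at every stage $j<s$ are \emph{equivalent} to the chain inequalities inserted at stage $j+1$, together with the bounds $p+1\le x\le q-1$ and $1\le y\le n-2$. This uses the explicit region formulas of Lemma \ref{lem: induction}. For $j$ odd, $v_{2j+1}\in R_j^u$ and $v_{2j+2}\in R_j^b$ mean $x_j^-<x_{2j+1},x_{2j+2}<x_j^+$, together with $y_j^+\le y_{2j+1}\le y_{j-1}^+-1$ and $y_{j-1}^-+1\le y_{2j+2}\le y_j^-$. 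These are exactly the strict/weak relations by which $y_{2j+1},y_{2j+2}$ are placed around the active vertical pair, with outer neighbours $y_{2j-5},y_{2j-4}$ by property 4 of Lemma \ref{lem: recursive-profile invariant}. The case $j$ even is symmetric with the roles of $x$ and $y$ exchanged, using property 5. At $j=1$ the outer bounds are $p,q$ in the $x$-direction and the grid limits $1,n-2$ in the $y$-direction. Finally, by Corollary \ref{coro: induction result of Z_i form}, the prescribed status of $Z_{j+1}$ (empty or nonempty) is equivalent to choosing the order $x_{2j+1}<x_{2j+2}$ versus $x_{2j+2}\le x_{2j+1}$, or the analogous order for the $y$-coordinates.

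For well-definedness we take a sequence in collection 1. Lemma \ref{lem: recursive-profile invariant} (properties 1 and 2) shows that every coordinate occurrence of $Q_1,\dots,Q_s$, apart from the fixed $x_1,x_2$, appears exactly once in $H_s$ or $V_s$. By the equivalence above, every recorded relation of the chains holds. The global bounds also hold: all $x$-entries lie strictly between $p$ and $q$ because they lie in $R_j^\sigma$ for $j\ge1$, and all $y$-entries lie in $[1,n-2]$ by the boundary restriction. So $\Phi$ lands in collection 2.

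Injectivity is immediate. The chains are labeled, so each coordinate $x_\iota,y_\iota$ is read off directly, and together with the fixed $x_1,x_2$ this reconstructs every $v_\iota$.

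Surjectivity is the main step, and we prove it by induction on $s$ using property 6 of Lemma \ref{lem: recursive-profile invariant}. Given an integer realization of $(H_s,V_s)$, we delete the labels of $Q_s$. The restriction is then a realization of $(H_{s-1},V_{s-1})$ with $Z_{s-1}$ prescribed nonempty, so by induction it comes from a valid sequence $Q_1,\dots,Q_{s-1}$. Because $Z_{s-1}\ne\varnothing$, Lemma \ref{lem: induction} gives its active bounds and regions explicitly. The inequalities of the final insertion then say exactly that $v_{2s-1},v_{2s}$ lie in the required regions and satisfy the boundary rule. The relative order of the new active pair forces the prescribed status of $Z_s$. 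The base case $s=2$ is handled the same way from $V_1=(y_1<y_2)$: here the constraints $1\le y_2$ and $y_1\le n-2$ are equivalent to $v_1\in R_0^b$ and $v_2\in R_0^u$ with the boundary restriction, given $x_1\in[q,m-1]$ and $x_2\in[0,p]$.

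The main obstacle is checking that no hidden constraint is lost in this translation. The outer neighbours recorded in the chain must coincide with the quantities $y_{j-1}^\pm$ and $x_{j-1}^\pm$ appearing in the region formulas, including at the transition stages $j=1,2$ where those bounds are $p,q$ and the grid limits rather than chain entries. We also need that the chain inequalities alone force the new vertices to avoid $U_j$. We will settle this stage by stage by comparing Lemma \ref{lem: induction} with properties 4 and 5 of Lemma \ref{lem: recursive-profile invariant}.
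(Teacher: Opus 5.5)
Your proposal is correct and follows essentially the same route as the paper: both reduce region membership at each stage to the chain insertion inequalities using the explicit rectangle formulas of Lemma \ref{lem: induction}, the outer-neighbour properties of the profiles, and integrality, and both read off the prescribed status of $Z_s$ from the order of the final pair. The only difference is organizational. You prove surjectivity by induction on $s$, peeling off the last insertion (property 6), whereas the paper reconstructs the whole sequence at once and then checks the region rules stage by stage.
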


\begin{proof}
The forward map follows directly from the recursive definitions of
$H_s,V_s$ and the defining inequalities of the recursive regions.

Conversely, suppose that integer realizations of $H_s,V_s$ are given.
Every labeled horizontal coordinate occurrence belonging to $Q_2,Q_3,\ldots,Q_s$ appears exactly once in $H_s$, and every labeled vertical coordinate occurrence belonging to $Q_1,Q_2,\ldots,Q_s$
appears exactly once in $V_s$. The remaining horizontal coordinates
are the fixed values $x_1,x_2$. 

Thus the profiles determine every labeled vertex $v_j=(x_j,y_j)$
uniquely.

We verify the recursive region-selection rules by induction. The
external vertical bounds and the fixed values $x_1,x_2$ give $v_1\in R_0^b$ and $v_2\in R_0^u$.

Suppose that the reconstructed sequence satisfies the recursive rules through stage $i$, where $i<s$. Since $Z_i\neq\varnothing$,
Lemma \ref{lem: induction} gives the rectangular formulas for
$R_i^u$ and $R_i^b$. Each defining condition for these rectangles is a conjunction of one horizontal and one vertical interval condition.

Because all coordinates are integral, every strict profile inequality is equivalent to its corresponding closed integer bound. For example, $y_1 < y_2 \implies y_1 + 1 \leq y_2$.

Consequently, the labeled insertion relations in $H_s,V_s$ are
equivalent to the required membership conditions in $R_i^u,R_i^b$.
Thus the recursive region-selection rules hold at stage $i+1$.

At stage $i=1$, the horizontal profile is empty while the vertical profile is $y_1 < y_2$, so $Z_1 = \mathcal{Z}(p, q, y_1, y_2) \neq \varnothing$.

At every stage $i<s$ where $i \geq 2$, the profiles contain strict active pairs $x_i^-<x_i^+$ and $y_i^-<y_i^+$. By Lemma \ref{lem: induction}, $Z_i = \mathcal{Z}(x_i^-,x_i^+,y_i^-,y_i^+),$ so $Z_i \neq \varnothing$.

At stage $s$, the coordinate pair replaced at that stage is strict in the prescribed non-terminal case and weakly reversed in the prescribed terminal case. The corresponding formula from Lemma \ref{lem: induction} therefore gives the prescribed status of $Z_s$.

Since every coordinate occurrence is labeled and appears exactly once, the profiles and the fixed values $x_1,x_2$ determine the reconstructed labeled sequence uniquely. Therefore the correspondence is bijective.
\end{proof}

\begin{lemma} \label{lem: distinctness of selected vertices}
    Every labeled sequence occurring in collection $1$ of Lemma \ref{lem: bijection between collections} consists of pairwise distinct vertices.
\end{lemma}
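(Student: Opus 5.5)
We show that any two vertices of a labeled sequence in collection $1$ of Lemma \ref{lem: bijection between collections} lie in different, pairwise disjoint regions, so they cannot coincide. Every $v_j$ with $j\geq 1$ lies in a region $R_h^u$ or $R_h^b$: $v_{2h+1}$ and $v_{2h+2}$ lie in the two regions of stage $h$. The vertices $v_{-1},v_0$ lie on the horizontal boundary. By the boundary rule, every $v_j$ with $j\geq1$ has $y_j\in[1,n-2]$, so no later vertex equals $v_{-1}$ or $v_0$. It therefore suffices to show that the admissible parts of the regions $R_h^\sigma$, for $\sigma\in\{u,b\}$ and $0\le h<s$ with $Z_h\neq\varnothing$, are pairwise disjoint.

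\emph{Different stages.} Let $h<h'$. By definition $R_{h'}^{\sigma'}=\Res(\zeta_{h'}^{\sigma'})\setminus U_{h'}$. By the final identity of Lemma \ref{lem: induction}, $U_{h'}\supseteq R_h^u\cup R_h^b$. Hence $R_{h'}^{\sigma'}$ is disjoint from $R_h^{\sigma}$ for both choices of $\sigma$. So vertices chosen at different stages are distinct.

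\emph{Same stage, $h\geq1$.} Lemma \ref{lem: induction} already shows that $R_h^u$ and $R_h^b$ are disjoint. We check this directly from the explicit rectangles, because the non-emptiness part of that lemma was stated only for admissible prefixes. For odd $h$, the vertical intervals $[y_h^+,y_{h-1}^+-1]$ and $[y_{h-1}^-+1,y_h^-]$ are disjoint since $y_h^-<y_h^+$. For even $h$, the horizontal intervals $[x_{h-1}^-+1,x_h^-]$ and $[x_h^+,x_{h-1}^+-1]$ are disjoint since $x_h^-<x_h^+$. Both strict inequalities hold because $Z_h\neq\varnothing$ for every $h<s$ in collection $1$. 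This is where the prescription $Z_i\neq\varnothing$ for $i<s$ enters. The forced-termination restrictions play no role, so the argument applies to collection $1$ even though it omits them.

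\emph{Stage $0$.} This is the main obstacle, since $R_0^u$ and $R_0^b$ overlap in general: for example, $R_0^u\supseteq\mathcal{R}(0,p,0,0)\subseteq R_0^b$. We avoid this using the data of Lemma \ref{lem: bijection between collections}. In collection $1$ we have $x_2\le p<q\le x_1$, with $x_1,x_2$ fixed in those ranges. Hence $x_1\neq x_2$ and $v_1\neq v_2$. With the boundary rule, this also places $v_1\in\mathcal{R}(q,m-1,1,n-2)$ and $v_2\in\mathcal{R}(0,p,1,n-2)$, which are disjoint.

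Combining the three cases, the vertices $v_{-1},v_0,v_1,\dots,v_{2s}$ are pairwise distinct.
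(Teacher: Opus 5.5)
Your proof is correct, but it handles distinctness across stages differently from the paper.

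The paper argues coordinatewise. The vertices of $Q_2$ have $x$-coordinates in $[p+1,q-1]$, while all earlier vertices have $x\le p$ or $x\ge q$. For later pairs it appeals to the profile chains: in the direction being shrunk, both new coordinates lie strictly between the adjacent active bounds, and no previously occurring coordinate lies strictly between them.

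You instead get cross-stage disjointness straight from the set-theoretic definition $R_{h'}^{\sigma'}=\Res(\zeta_{h'}^{\sigma'})\setminus U_{h'}$. For $h<h'$ you have $R_h^\sigma\subseteq\Res(\zeta_h^\sigma)\subseteq U_{h'}$, which needs nothing beyond the definitions (citing the final identity of Lemma \ref{lem: induction} is harmless but not necessary). This is the same exclusion mechanism the paper itself uses later in Lemma \ref{lem: construction-run-injective}.

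Your within-stage arguments match the paper's. At stage $0$ you separate the admissible parts of $R_0^u$ and $R_0^b$ via the boundary rule. For $h\ge 1$ you use the explicit rectangles of Lemma \ref{lem: induction}, with the needed strict inequality supplied by $Z_h\neq\varnothing$. You are right that those formulas hold for region-valid prefixes even though the lemma's disjointness clause is stated only for admissible ones; the paper relies on this implicitly when it cites the formulas.

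What your route buys:
\begin{itemize}
\item It does not depend on the profile-chain invariant of Lemma \ref{lem: recursive-profile invariant}, whose ``nothing lies strictly between adjacent active bounds'' property the paper invokes rather informally.
\item It treats all stages uniformly, including comparing stage-$0$ vertices with later ones, without any coordinate bookkeeping.
\end{itemize}
The paper's route keeps the argument inside the coordinate/profile language that the subsequent counting relies on.
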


\begin{proof}
    First, $Q_0$ contains two distinct vertices. The two vertices of $Q_1$ are distinct because
    \begin{align*}
        v_1 \in \{(x,y) \mid (x,y) \in R_0^b, y \in [1, n-2]\}, \qquad
        v_2 \in \{(x,y) \mid (x,y) \in R_0^u, y \in [1, n-2]\}
        \\
        \text{and } \{(x,y) \mid (x,y) \in R_0^b, y \in [1, n-2]\} \cap \{(x,y) \mid (x,y) \in R_0^u, y \in [1, n-2]\} = \varnothing.
    \end{align*}
    Neither vertex is equal to a vertex of $Q_0$ due to the $y$-coordinate restriction. 

    The vertices of $Q_2$ are distinct from previous vertices as they have horizontal coordinates strictly between $p$ and $q$ and all earlier vertices have horizontal coordinates $\leq p$ or $\geq q$. By Lemma \ref{lem: induction}, 
    \begin{equation*}
        v_3 \in R_1^u = \mathcal{R}(p+1, q-1, y_2, n-2), \qquad v_4 \in R_1^b = \mathcal{R}(p+1, q-1, 1, y_1),
    \end{equation*}
    so $y_4 \leq y_1 < y_2 \leq y_3$, as $Z_1 = \mathcal{Z}(p, q, y_1, y_2) \neq \varnothing$. Therefore, the vertices of $Q_2$, $v_3$ and $v_4$, are distinct from each other.

    Now consider a later pair $Q_{i+1}$. In the coordinate direction being shrunk at stage $i$, both newly selected coordinates lie strictly between the adjacent active bounds. By the recursive definition of the profile chains, no previously occurring coordinate lies strictly between those adjacent bounds. Consequently, neither new vertex can equal an earlier vertex.
    
    Moreover, the explicit formulas in Lemma~\ref{lem: induction} give $R_i^u\cap R_i^b=\varnothing$. Thus the two vertices within $Q_{i+1}$ are distinct from one another. Therefore all vertices in the labeled sequence are pairwise distinct.
    
\end{proof}

\begin{lemma} \label{lem: construction-run-injective}
    Fix an oriented initial pair $Q_0=\{(p,n-1),(q,0)\}$ with $p<q$. Let
    \begin{equation*}
        Q_0,Q_1,\ldots,Q_s  \qquad \text{and} \qquad Q_0,Q'_1,\ldots,Q'_t
    \end{equation*}
    be two admissible construction runs that terminate for the first time after $Q_s$ and $Q'_t$, respectively. If
    \begin{equation*}
        Q_0\cup Q_1\cup\dots\cup Q_s = Q_0\cup Q'_1\cup\dots\cup Q'_t,
    \end{equation*}
    then $s=t$ and $Q_i=Q'_i$ for every $1 \leq i \leq s$. Moreover, the labels of the two vertices within every pair are also uniquely determined.
    
    Consequently, for a fixed oriented pair $Q_0$, the map from
    admissible labeled construction runs to their output vertex sets is injective.
\end{lemma}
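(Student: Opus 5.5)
We argue by induction on the stage index that the common output set $M := Q_0\cup Q_1\cup\dots\cup Q_s$ determines each $Q_i$ together with its labels. The guiding observation is that, by Lemma~\ref{lem: induction}, the regions $R_0^u,R_0^b,R_1^u,R_1^b,\dots$ are computed from $Q_0$ and the earlier selected pairs alone. The central step is the following claim: if two admissible runs agree through $Q_i$ and the stage is non-terminal, then $Q_{i+1}=Q'_{i+1}$ with the same labels. Since the two runs agree through $Q_i$, they have the same $Z_i$, $\zeta_i^u$, $\zeta_i^b$, $R_i^u$ and $R_i^b$. The output $M$ is a minimal resolving set by Theorem~\ref{thm: soundness}, and $\hat{\mathcal{Q}}_i$ is a region-valid prefix of it. Lemma~\ref{lem: domination within R} then says $M$ has at most one vertex in $R_i^u$ and at most one in $R_i^b$. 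By construction, each run places exactly one vertex of $Q_{i+1}$ in each of these regions. Hence the vertex of $Q_{i+1}$ in $R_i^u$ is the unique element of $M\cap R_i^u$, and the same holds for $Q'_{i+1}$, so they coincide; likewise for $R_i^b$. The labels are fixed by the parity rule: for $i$ odd, $v_{2i+1}\in R_i^u$ and $v_{2i+2}\in R_i^b$, with the roles reversed for $i$ even. For $i>0$ the regions are disjoint by Lemma~\ref{lem: induction}, so this labeling is unambiguous.

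The base stage $i=0$ requires extra care, because $R_0^u$ and $R_0^b$ are not disjoint; for instance, $\mathcal{R}(0,p,0,n-2)$ meets $\mathcal{R}(0,q-1,0,0)$. We use the boundary restriction $y_1,y_2\in[1,n-2]$ to replace the two regions by $\mathcal{R}(0,p,1,n-2)$ and $\mathcal{R}(q,m-1,1,n-2)$. These are disjoint because $p<q$, as already noted in the proof of Theorem~\ref{thm: completeness} and in Lemma~\ref{lem: distinctness of selected vertices}. Lemma~\ref{lem: domination within R} applied at $i=0$, with $M$ minimal, then shows that $v_1$ is the unique vertex of $M$ in the restricted $R_0^b$ and $v_2$ the unique vertex of $M$ in the restricted $R_0^u$. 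This yields $Q_1=Q'_1$ with the same labels.

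It remains to show $s=t$. Because the runs coincide stage by stage, they share the sets $Z_i$, and each run stops at the first index with $Z_i=\varnothing$. This index is determined by the common prefix, which gives $s=t$. Alternatively, if $s<t$, then $\hat{\mathcal{Q}}_s$ would be a proper resolving subset of $M$, contradicting minimality. Injectivity of the map from runs to output sets follows at once.

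The main obstacle is applying Lemma~\ref{lem: domination within R} legitimately. Its hypothesis requires $M$ to be minimal and to contain the region-valid prefix; this holds because $M$ is a soundness output and both runs' prefixes are admissible. One must also check, for $i\ge 1$, that no vertex from a later stage of one run lies in $R_i^u\cup R_i^b$ of the other. This is automatic once the prefixes agree, since both runs choose later vertices outside $U_{i+1}=\bigcup_{h\le i}(R_h^u\cup R_h^b)$ by the final identity of Lemma~\ref{lem: induction}.
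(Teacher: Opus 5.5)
Your proof is correct, but it gets the key uniqueness step by a different route from the paper.

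\textbf{Uniqueness within each region.} You obtain $|M\cap R_i^u|\le 1$ and $|M\cap R_i^b|\le 1$ from minimality, via Theorem~\ref{thm: soundness} and Lemma~\ref{lem: domination within R}. The paper does not use minimality here. It shows directly that $S\cap R_i^u$ and $S\cap R_i^b$ are exactly the two vertices of $Q_{i+1}$:
\begin{itemize}
\item no vertex of $\hat{\mathcal{Q}}_i$ lies in $R_i^u\cup R_i^b$, because $\zeta_i^u,\zeta_i^b\in Z_i$ are unresolved by the prefix;
\item every vertex chosen at a stage $h>i$ lies in $\Res(\zeta_h^\sigma)\setminus U_h$, and $U_h\supseteq R_i^u\cup R_i^b$.
\end{itemize}
This is precisely the observation in your last paragraph. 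Combined with the prefix remark, it suffices on its own, so your appeal to soundness and domination is redundant rather than needed.

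\textbf{The equality $s=t$.} The paper counts vertices: Lemma~\ref{lem: distinctness of selected vertices} gives $2(s+1)=|S|=2(t+1)$. You instead note that agreeing prefixes produce the same $Z_i$, hence the same first empty index. Your argument avoids the distinctness lemma and is slightly more economical.

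\textbf{Trade-off.} Your route is conceptually appealing, since minimality forces one vertex per region. However, it makes injectivity depend on the soundness theorem and on the domination lemma. The $i=0$ case of that lemma rests on Lemma~\ref{lem: cutting of R_0}, and hence on the external results of Andersen et al. The paper's uniqueness step uses only the definition of the regions. It therefore needs no minimality at all and applies unchanged to any runs obeying the region-selection and boundary rules.
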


\begin{proof}
    Let 
    \begin{equation*}
        S = Q_0\cup Q_1\cup\dots\cup Q_s = Q_0\cup Q'_1\cup\dots\cup Q'_t
    \end{equation*}
    be the common output set. For each of the two runs separately, if its terminal index is $1$, pairwise distinctness follows directly from Construction \ref{construction}. If its terminal index is at least $2$, pairwise distinctness follows from Lemma \ref{lem: distinctness of selected vertices}. Hence the first run contains $2(s+1)$ distinct vertices and the second contains $2(t+1)$ distinct vertices. Since their output sets are equal, $2(s + 1) = |S| = 2(t+1) \implies s=t$.
    
    We now show inductively that the pairs $Q_i$ are uniquely determined by $S$ and $Q_0$.
    
    First consider $Q_1$. The pair $Q_1$ contains one vertex from $R_0^u$ and one vertex from $R_0^b$. No vertex of $Q_0$ belongs to either region, because both distinguished pairs $\zeta_0^u,\zeta_0^b$ are unresolved by $Q_0$.
    
    Moreover, every vertex selected after $Q_1$ belongs to some region $R_j^u$ or $R_j^b$ with $j\ge1$. By the definition of these regions, such a vertex does not resolve either $\zeta_0^u$ or $\zeta_0^b$. Hence no vertex selected after $Q_1$ belongs to $R_0^u\cup R_0^b$.
    
    It follows that $S\cap R_0^u$ and $S\cap R_0^b$ consist only of the two vertices selected in $Q_1$. By the boundary restriction, the admissible portions of $R_0^u$ and $R_0^b$ are disjoint, so the two vertices of \(Q_1\) are uniquely determined by $S$. Since the construction prescribes which region contains $v_1$ and which contains $v_2$, their individual labels are also uniquely determined. Therefore, $Q_1=Q'_1$.
    
    Now suppose inductively that $Q_j=Q'_j$ for every $0\le j\le i$. The common prefix $\hat{\mathcal{Q}}_i$
    uniquely determines $Z_i$, the distinguished pairs
    $\zeta_i^u,\zeta_i^b$, and consequently the regions
    $R_i^u,R_i^b$.
    
    No vertex in the existing prefix lies in $R_i^u \cup R_i^b$.
    Indeed, the pairs $\zeta_i^u,\zeta_i^b$ belong to $Z_i$, so neither is resolved by any vertex of $\hat{\mathcal{Q}}_i$.
    
    On the other hand, $Q_{i+1}$ contains exactly one vertex from $R_i^u$ and exactly one vertex from $R_i^b$. It remains to show that no later vertex belongs to either of these regions. Let $w$ be selected at a later stage $h>i$. Then
    \begin{equation*}
        w\in R_h^u\cup R_h^b.
    \end{equation*}
    By the definition of the regions at stage $h$, every such vertex is excluded from $\Res(\zeta_i^u) \cup \Res(\zeta_i^b)$.
    Since $R_i^u\subseteq \Res(\zeta_i^u)$ and $R_i^b\subseteq \Res(\zeta_i^b)$, we have that $w\notin R_i^u\cup R_i^b$.
    
    Therefore, $S\cap R_i^u$ and $S\cap R_i^b$ are precisely the two vertices of $Q_{i+1}$. The parity rule in
    Construction \ref{construction} determines which vertex is
    $v_{2i+1}$ and which is $v_{2i+2}$. Hence $Q_{i+1}$, including its labels, is uniquely determined by $S$ and the preceding prefix.
    
    By induction, $Q_i=Q'_i$ for every $1 \leq i \leq s$. Thus the two labeled runs are identical, and the map from admissible labeled runs beginning with $Q_0$ to their output vertex sets is injective.
\end{proof}

\begin{lemma} \label{lem: numbers of entries and strict comparisons}
    Suppose that the construction terminates for the first time after $Q_{r-1}$, where $r\geq 3$. Then $H_{r-1}$ has $2r-4$ entries and $r-3$ strict comparisons, while $V_{r-1}$ has $2r-2$ entries and $r-2$ strict comparisons.
\end{lemma}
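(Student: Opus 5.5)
I will argue by induction on the stage index, tracking how many entries and strict comparisons each insertion adds to $H_i$ and $V_i$. The inductive step uses the explicit insertion rules preceding Lemma \ref{lem: recursive-profile invariant}. Since the run terminates for the first time after $Q_{r-1}$, every stage $i<r-1$ is non-terminal and only the last insertion is of terminal type. The count at stage $2$ is read off directly: $V_2=(y_4\le y_1<y_2\le y_3)$ has $4$ entries and $1$ strict comparison. $H_2$ is either $(x_3<x_4)$, with $2$ entries and $1$ strict comparison, when $Z_2\neq\varnothing$, or $(x_4\le x_3)$, with $2$ entries and $0$ strict comparisons, when $Z_2=\varnothing$. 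For $r=3$ the terminal case applies, giving $H_2$ with $2=2r-4$ entries and $0=r-3$ strict comparisons, and $V_2$ with $4=2r-2$ entries and $1=r-2$ strict comparisons, as claimed.

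For the inductive step, fix a non-terminal stage $i\ge2$ and compare $(H_{i+1},V_{i+1})$ with $(H_i,V_i)$. By Lemma \ref{lem: recursive-profile invariant} (property 3), the coordinate being shrunk has its active pair adjacent and strict. In that profile the adjacent strict pair is replaced by a chain of four entries. In the non-terminal case this chain is $a<\cdot<\cdot<b$, which adds $2$ entries and $2$ strict comparisons (three strict relations replace one). In the terminal case it is $a<\cdot\le\cdot<b$, which adds $2$ entries and $1$ strict comparison. In the other profile, the two new occurrences are inserted immediately outside the current active pair, as in $y_{2i-5}<y_{2i+2}\le y_{2i-1}<y_{2i}\le y_{2i+1}<y_{2i-4}$. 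Here the strict relations to the outer neighbors $y_{2i-5},y_{2i-4}$ are retained, so the profile gains $2$ entries and $0$ strict comparisons. The outer neighbors are identified by properties 4 and 5, with $p,q$ playing this role when $i=2$.

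The main point requiring care is this bookkeeping of comparisons at the outer neighbors. Each relation between an outer neighbor and an active bound is already strict in $H_i$ or $V_i$, so the insertion splits one strict relation into a strict relation plus a weak one, and the strict count is unchanged. When $i=2$ the horizontal outer bounds are the external constants $p,q$, which are not entries of $H_2$. The new horizontal entries then become the two ends of $H_3$, and the inequalities with $p,q$ are encoded by the interval constraint $p+1\le x_j\le q-1$ of Lemma \ref{lem: bijection between collections} rather than as chain comparisons. In this case $H$ still gains $2$ entries and $0$ strict comparisons. I would state this convention explicitly, consistent with Figure \ref{fig:profiles}, where $H_3$ has $4$ entries and $1$ strict comparison.

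To finish, I sum the increments. Stages alternate, with odd $i$ shrinking $H$ and even $i$ shrinking $V$, so each step adds $2$ entries to each profile. Starting from $|H_2|=2$ and $|V_2|=4$, after reaching stage $r-1$ this gives $|H_{r-1}|=2+2(r-3)=2r-4$ and $|V_{r-1}|=4+2(r-3)=2r-2$.

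For strict comparisons, every non-terminal shrinking step adds one net strict comparison to the shrunk profile. This includes the step producing $H_2=(x_3<x_4)$, which contributes $1$ starting from the empty $H_1$. The terminal step adds one fewer than a non-terminal step would. Starting from $V_1=(y_1<y_2)$, there are $r-2$ shrinking steps from stage $1$ to stage $r-1$, distributed alternately between $H$ and $V$, with the last one terminal. Counting per profile gives $r-3$ strict comparisons in $H_{r-1}$ and $r-2$ in $V_{r-1}$. I expect the parity check confirming that these totals hold regardless of which profile receives the terminal step to be routine once the increments above are established.
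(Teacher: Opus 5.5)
Your approach is the paper's: check $r=3$ directly, then track how each transition changes the entries and strict comparisons. Your entry count is correct. Your increment analysis in the inductive step is also correct, including the handling of the outer neighbors and of $p,q$ at $i=2$:
\begin{itemize}
\item a non-terminal shrink after the first adds $2$ strict comparisons, and a terminal one adds $1$;
\item the transition creating $H_2$ adds $1$, or $0$ if terminal;
\item a non-shrinking insertion adds $0$.
\end{itemize}
The gap is in your final paragraph, which is where the lemma's actual content lies. There you say every non-terminal shrinking step adds \emph{one} net strict comparison, and the terminal step one fewer. This contradicts your own increments, and with that rule the totals fail once $r\ge 4$. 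For $r=5$ it predicts $1$ strict comparison in $H_4$ and $2$ in $V_4$. The actual chains are $H_4=(x_6\le x_3<x_8\le x_7<x_4\le x_5)$ and $V_4=(y_4\le y_1<y_8\le y_5<y_6\le y_7<y_2\le y_3)$, which have $2$ and $3$. You then assert $r-3$ and $r-2$ and defer the parity check as routine, so the claimed values are never actually derived.

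To close the gap, use the increments you already proved and split on parity, as the paper does. The $r-2$ transitions are $i=1,\dots,r-2$. The odd ones, $h=\lceil (r-2)/2\rceil$ of them, shrink $H$. The even ones, $v=\lfloor (r-2)/2\rfloor$ of them, shrink $V$. The terminal transition $i=r-2$ lies in $H$ exactly when $r$ is odd. Let $\epsilon=1$ if $r$ is odd and $\epsilon=0$ otherwise. Then $H_{r-1}$ has $2h-1-\epsilon$ strict comparisons and $V_{r-1}$ has $1+2v-(1-\epsilon)$. In both parities these evaluate to $r-3$ and $r-2$ respectively, and the formula also covers $r=3$.
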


\begin{proof}
    For $r=3$, the construction terminates after $Q_2$, and
    $H_2=(x_4 \leq x_3)$ while $V_2=(y_4 \leq y_1<y_2 \leq y_3)$.
    
    Thus $H_2$ has $2$ entries and $0$ strict comparisons, while
    $V_2$ has $4$ entries and $1$ strict comparison, as required.
    
    Assume now that $r \geq 4$. The first non-terminal horizontal shrinking stage $Q_2$ contributes one strict comparison. Every later non-terminal shrinking stage replaces one strict comparison by three strict comparisons and therefore increases the number of strict comparisons by $2$. 
    
    A terminal shrinking stage replaces one strict comparison by two strict comparisons and one weak comparison and therefore increases the number of strict comparisons by $1$. A non-shrinking insertion does not change the number of strict comparisons.
    
    Suppose first that $r-1=2 \ell$. Then the terminal stage is horizontal. The first horizontal shrinking stage contributes $1$, the $\ell-2$ intermediate horizontal shrinking stages contribute $2$ each, and the final horizontal shrinking stage contributes $1$. Hence the horizontal strict-comparison count is
    \begin{equation*}
    1+2(\ell-2)+1 = 2\ell-2 = r-3.
    \end{equation*}
    The vertical profile begins with the strict comparison $y_1<y_2$. The $\ell-1$ vertical shrinking stages are all non-terminal, so the vertical strict-comparison count is
    \begin{equation*}
        1+2(\ell-1) = 2\ell-1 = r-2.
    \end{equation*}
    
    Suppose next that $r-1=2 \ell+1$. Then the terminal stage is vertical. All horizontal shrinking stages are non-terminal, so their total contribution is
    \begin{equation*}
        1+2(\ell-1) = 2\ell-1 = r-3.
    \end{equation*}
    The vertical profile begins with one strict comparison. The first $\ell-1$ vertical shrinking stages contribute $2$ each, and the final vertical shrinking stage contributes $1$. Therefore the vertical strict-comparison count is
    \begin{equation*}
        1+2(\ell-1)+1 = 2\ell = r-2.
    \end{equation*}
    Finally, $H_{r-1}$ contains two horizontal coordinates from each of $Q_2,Q_3,\ldots,Q_{r-1}$, so $|H_{r-1}| = 2(r-2) = 2r-4$. Similarly, $V_{r-1}$ contains two vertical coordinates from each of $Q_1,Q_2,\ldots,Q_{r-1}$, so $|V_{r-1}| = 2(r-1) = 2r-2$.

\end{proof}

\begin{corollary} \label{coro: profile counts}
    Suppose $r \geq 3$ and the construction terminates immediately after $Q_{r-1}$. For fixed $Q_0=\{(p,n-1),(q,0)\}$ with $p<q$ and $q - p >1$, the number of horizontal profiles and the number of vertical profiles are
    \begin{equation*}
        \binom{q-p+r-3}{2r-4} \qquad \text{and} \qquad
        \binom{n+r-3}{2r-2}.
    \end{equation*}
\end{corollary}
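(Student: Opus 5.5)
The plan is to combine Lemma \ref{lem: bijection between collections}, which turns each admissible tail $Q_2,\dots,Q_{r-1}$ into an integer realization of the two labeled mixed chains $H_{r-1}$ and $V_{r-1}$, with Lemma \ref{lem: numbers of entries and strict comparisons}, which gives their lengths and numbers of strict comparisons, and then to count each chain with Lemma \ref{lem: counting mixed chains}. So the whole proof comes down to pinning down the interval $[A,B]$, the length $L$, and the strict count $s$ for each profile, and substituting into $\binom{B-A+L-s}{L}$.

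\emph{Horizontal profile.} By Lemma \ref{lem: bijection between collections}, every entry of $H_{r-1}$ satisfies $p+1 \leq x_j \leq q-1$, and the chain imposes no further outer restriction. The outermost entries $x_6 \le x_3$ and $x_4\le x_5$, and their later analogues, are only bounded by the external bounds $p<\cdot<q$. Hence $A=p+1$ and $B=q-1$. Lemma \ref{lem: numbers of entries and strict comparisons} gives $L=2r-4$ and $s=r-3$, so the count is
\begin{equation*}
\binom{(q-p-2)+(2r-4)-(r-3)}{2r-4}=\binom{q-p+r-3}{2r-4}.
\end{equation*}

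\emph{Vertical profile.} The entries of $V_{r-1}$ range over $[1,n-2]$ by the boundary restriction, so $A=1$ and $B=n-2$. With $L=2r-2$ and $s=r-2$ this gives
\begin{equation*}
\binom{(n-3)+(2r-2)-(r-2)}{2r-2}=\binom{n+r-3}{2r-2}.
\end{equation*}

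Before applying Lemma \ref{lem: counting mixed chains}, I must check that the recursively built profile really is a single totally ordered chain. That is, each insertion places the new occurrences adjacent to existing ones, with exactly the relations $<$ or $\le$ recorded there, and no additional constraints. This follows from Lemma \ref{lem: recursive-profile invariant}: the outer-neighbor properties 4 and 5 show that the region inequalities coincide with the chain relations between neighbors.

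Second, I must justify that the forced-termination restrictions of Construction \ref{construction} do not cut the count further. The prescribed first termination at $Q_{r-1}$ already fixes the pattern: stages before $r-1$ are strict and stage $r-1$ is weakly reversed. I also need to check that no intermediate stage has active width $1$, which would force an earlier termination. The strict inequalities give only $x_i^+-x_i^-\ge1$, so this is the subtle point. I would argue that if some intermediate active pair had difference exactly $1$, then the next stage's region in that direction would be empty. That contradicts the chain having two entries strictly inside that pair, so such realizations are not counted anyway.

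This last step is where I expect the main obstacle, since an inner entry might sit weakly rather than strictly inside. I would first try to verify it case by case from the insertion rules: new entries at a shrinking stage are always strictly inside the active pair. I would then note that the condition $q-p>1$ is exactly what makes the stage-$1$ horizontal region nonempty.
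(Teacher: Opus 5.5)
Your proposal is correct and matches the paper's proof. The paper simply applies Lemma \ref{lem: counting mixed chains} with $(A,B,L,s)=(p+1,q-1,2r-4,r-3)$ for $H_{r-1}$ and $(1,n-2,2r-2,r-2)$ for $V_{r-1}$, taking the parameters from Lemma \ref{lem: numbers of entries and strict comparisons}. Your extra checks on the chain structure and the forced-termination restrictions are not needed for the count itself, since the paper handles them separately in Lemmas \ref{lem: recursive-profile invariant} and \ref{lem: coordinate independence}. Your remaining worry also goes away: every insertion into an active pair is strict against both enclosing bounds, so any active pair that is later subdivided automatically has width at least $2$.
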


\begin{proof}
    The horizontal profile is a prescribed mixed chain of length $2r-4$ in the interval $[p+1,q-1]$ with $r-3$ strict comparisons. By Lemma \ref{lem: counting mixed chains}, its number of realizations is
    \begin{equation*}
        \binom{(q-1)-(p+1)+(2r-4)-(r-3)}{2r-4} = \binom{q-p+r-3}{2r-4}.
    \end{equation*}
    The vertical profile is a prescribed mixed chain of length $2r-2$ in the interval $[1,n-2]$ with $r-2$ strict comparisons. Hence its number of realizations is
    \begin{equation*}
        \binom{(n-2)-1+(2r-2)-(r-2)}{2r-2} = \binom{n+r-3}{2r-2}.
    \end{equation*}
    Although the positions of the strict comparisons depend on the parity of the terminal stage, Lemma \ref{lem: counting mixed chains} shows that the number of realizations depends only on the total number of strict comparisons.
\end{proof}

\begin{lemma} \label{lem: coordinate independence}
Fix $s \geq 2$, an admissible initial pair $Q_0=\{(p,n-1),(q,0)\}$ with $p<q$ and a horizontal placement $x_1\in[q,m-1]$, $x_2\in[0,p]$, and $(x_1,x_2)\neq(m-1,0)$. Prescribe that $Z_i\neq\varnothing$ for every $1 \leq i < s$, and that $Z_s = \varnothing$.

Every realization of the corresponding horizontal profile may be
combined with every realization of the corresponding vertical profile. Each such pair determines a unique valid construction run
terminating for the first time after $Q_s$.
\end{lemma}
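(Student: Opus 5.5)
The plan is to reduce the statement to Lemma \ref{lem: bijection between collections}. The key observation is that the chains $H_s$ and $V_s$ share no entries and impose no cross-coordinate conditions.

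The first step is to make the decoupling precise. By construction, $H_s$ involves only the labeled horizontal coordinates $x_3,\dots,x_{2s}$. Its comparisons are among these occurrences, and every entry is bounded by the fixed external values $p<\cdot<q$. Likewise, $V_s$ involves only $y_1,\dots,y_{2s}$, with every entry bounded by $1\le\cdot\le n-2$. I will check from the recursive insertion rules that no inequality in either profile refers to a coordinate of the other direction. At odd stages, the horizontal insertion uses only the current horizontal active pair, and the vertical insertion uses only the outer neighbours $y_{2i-5},y_{2i-4}$. At even stages the roles are symmetric. This follows by listing the insertion rules after the definition of $H_i$ and $V_i$, together with properties 3--5 of Lemma \ref{lem: recursive-profile invariant}. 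Hence the set of integer realizations of the pair $(H_s,V_s)$ is exactly the Cartesian product of the realizations of $H_s$ and those of $V_s$.

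The second step applies Lemma \ref{lem: bijection between collections} with the prescribed statuses $Z_i\ne\varnothing$ for $i<s$ and $Z_s=\varnothing$. By that lemma, each pair in the product corresponds to a unique labeled sequence $Q_1,\dots,Q_s$ with the given $x_1,x_2$. This sequence satisfies the region-selection and boundary rules, and it terminates for the first time at $Q_s$. Lemma \ref{lem: distinctness of selected vertices} shows that its vertices are pairwise distinct.

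It remains to verify the restrictions that Lemma \ref{lem: bijection between collections} does not cover, namely the $Q_1$ restrictions and the forced-termination restrictions. For the $Q_1$ restrictions, the hypothesis $(x_1,x_2)\neq(m-1,0)$ and the admissibility of $Q_0$ handle the corner cases, provided $x_1,x_2$ were chosen admissibly; I will read "admissible horizontal placement" as including the conditions $x_1\ne m-1$ or $x_2\ne 0$ when $Q_0$ has a corner. These conditions depend only on horizontal data, so they do not break the independence.

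The main obstacle is the forced-termination restriction. This restriction says that if an active width or height equals $1$ at a non-terminal stage, the next step must terminate. I will argue that the prescribed statuses make this automatic. Suppose that at some stage $i<s-1$ the active width satisfies $x_i^+-x_i^-=1$, where that direction is shrunk next. Then the regions $R_i^u$ and $R_i^b$ would have empty horizontal interval, so no realization of the strict chain $x_i^-<x_{2i+1}<x_{2i+2}<x_i^+$ exists. Therefore $Z_{i+1}\ne\varnothing$ is impossible. The terminal case is the weak reversal, which is exactly the required inequality. The same argument applies vertically. The stage $i=0$ condition $q-p=1$ is handled the same way, since it forces $H$ to be empty, which is consistent only with $s=1$, excluded here. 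Thus every product realization obeys all restrictions, so it is an admissible run terminating first after $Q_s$. Uniqueness comes from the bijectivity in Lemma \ref{lem: bijection between collections}.
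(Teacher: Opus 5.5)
Your overall route is the paper's: invoke Lemma \ref{lem: bijection between collections} for the region-selection rules and the prescribed statuses, then check the $Q_1$ and forced-termination restrictions separately. The gap is in the forced-termination check, where you attach the width-$1$ condition to the wrong direction.

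In Construction \ref{construction} the trigger is the extent of the direction that $Q_{i+1}$ does \emph{not} move:
\begin{itemize}
\item for odd $i$, the trigger is $y_i^+-y_i^-=1$, while $Q_{i+1}$ replaces the horizontal bounds;
\item for even $i\ge 2$, the trigger is $x_i^+-x_i^-=1$, while $Q_{i+1}$ replaces the vertical bounds.
\end{itemize}
As written, your argument instead assumes width $1$ in the direction being shrunk by $Q_{i+1}$ itself. Your $R_i^u,R_i^b$ with empty horizontal interval and your chain $x_i^-<x_{2i+1}<x_{2i+2}<x_i^+$ are the odd-stage horizontal objects. That configuration has no realization at all, because the weak reversal $x_i^-<x_{2i+2}\le x_{2i+1}<x_i^+$ also needs integers strictly between the bounds. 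So your sentence ``the terminal case is the weak reversal, which is exactly the required inequality'' is vacuous there. The case that actually needs checking, for instance odd $i<s$ with $y_i^+-y_i^-=1$ but $x_i^+-x_i^-\ge 2$, is never addressed, and the same mismatch recurs in ``the same argument applies vertically.''

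The repair is the look-ahead you already used correctly for $i=0$ with $q-p=1$. Take odd $i<s$ with $y_i^+-y_i^-=1$.
\begin{itemize}
\item The pair $Q_{i+1}$ leaves the vertical bounds unchanged.
\item If $Z_{i+1}\neq\varnothing$, then $Z_s=\varnothing$ forces $s\ge i+2$.
\item Then $Q_{i+2}$ would need vertical coordinates strictly between $y_i^-$ and $y_i^+$, which is impossible.
\item Hence $s=i+1$, and the terminal weak reversal in $H_s$ gives $x_{2i+2}\le x_{2i+1}$, the required ordering.
\end{itemize}
The even case with $x_i^+-x_i^-=1$ is symmetric and yields $y_{2i+2}\le y_{2i+1}$.

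Separately, you do not need to reinterpret the hypothesis to get the $Q_1$ corner restrictions; they follow directly. If $p=0$, then $x_2\in[0,p]$ forces $x_2=0$, so $(x_1,x_2)\neq(m-1,0)$ already gives $x_1\neq m-1$. The case $q=m-1$ is symmetric.
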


\begin{proof}
By Lemma \ref{lem: bijection between collections}, every pair of profile realizations determines a unique labeled sequence $Q_1,Q_2,\ldots,Q_s$ satisfying the recursive region-selection rules of the sets $Z_i$. In particular, $Z_i\neq\varnothing$ for $1 \leq i < s$ and $Z_s=\varnothing$.

It remains to verify the additional restrictions in
Construction \ref{construction}.

Since $s\geq2$, the vertical profile contains the strict comparison $y_1<y_2$, and its external bounds give $1\leq y_1<y_2\leq n-2$. We first verify the $Q_1$ restrictions. If neither vertex of $Q_0$ is a corner, the excluded placement $(x_1,x_2)=(m-1,0)$ is the only horizontal placement that would require $y_1>y_2$.

If $v^{-1}=(0,n-1)$, then $p=0$, so $x_2=0$. The exclusion $(x_1,x_2)\neq(m-1,0)$ therefore implies $x_1\neq m-1$. If $v^0=(m-1,0)$, then $q=m-1$, so $x_1=m-1$. The same exclusion implies
$x_2 \neq 0$. Thus all the $Q_1$ restrictions are satisfied.

Because $s \geq 2$, the horizontal profile contains $x_3,x_4$, both lying in the integer interval $[p+1,q-1]$. This interval must therefore be nonempty, so $q-p\geq2$. Hence the exceptional restriction for $q-p=1$ does not arise.

Now suppose that $i<s$ is odd and $y_i^+-y_i^-=1$. The selection of $Q_{i+1}$ changes only the active horizontal interval, so the active vertical interval at stage $i+1$ remains $[y_i^-,y_i^+]$.

If $s>i+1$, then stage $i+1$ is non-terminal, and selecting
$Q_{i+2}$ would require integer vertical coordinates strictly between $y_i^-$ and $y_i^+$. No such integer exists. Therefore $s=i+1$. Since stage $s$ is terminal, we therefore have $x_{2i+2}\leq x_{2i+1}$, which is exactly the required terminal ordering.

Similarly, suppose that $i \geq 2$ is even and $x_i^+-x_i^-=1$. The selection of $Q_{i+1}$ changes only the active vertical interval, so the active horizontal interval at stage $i+1$ remains $[x_i^-,x_i^+]$.

If $s>i+1$, then selecting $Q_{i+2}$ would require integer horizontal coordinates strictly between $x_i^-$ and $x_i^+$, which is impossible. Thus, $s=i+1$. The terminal profile therefore gives $y_{2i+2}\leq y_{2i+1}$, as desired.

Thus all additional restrictions in Construction \ref{construction} are satisfied. Lemma \ref{lem: distinctness of selected vertices} shows that all selected vertices are pairwise distinct. Therefore the reconstructed sequence is a unique valid construction run terminating for the first time after $Q_s$.
\end{proof}

\begin{lemma} \label{lem: count for a fixed initial pair}
    Fix an admissible initial pair $Q_0 = \{(p,n-1),(q,0)\}$ and $0\le p<q\le m-1$. For $r \geq 3$, the number of $2r$-minimals generated from $Q_0$ in this orientation is
    \begin{equation*}
        \left((p+1)(m-q)-1\right) \binom{q-p+r-3}{2r-4} \binom{n+r-3}{2r-2}.
    \end{equation*}
\end{lemma}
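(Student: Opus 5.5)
The plan is to fix $Q_0$ and count admissible construction runs that terminate for the first time after $Q_{r-1}$, since such a run outputs $2(r-1)+2=2r$ vertices. By Lemma \ref{lem: construction-run-injective}, distinct admissible runs from this $Q_0$ give distinct output sets. By Theorems \ref{thm: soundness} and \ref{thm: completeness}, these output sets are exactly the $2r$-minimals generated from $Q_0$ in this orientation. So it suffices to count admissible runs, which we do by factoring each run into three independent pieces: the horizontal coordinates $(x_1,x_2)$ of $Q_1$, the horizontal profile $H_{r-1}$, and the vertical profile $V_{r-1}$.

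\textbf{Step 1: the factor $(p+1)(m-q)-1$.} This counts the choices of $(x_1,x_2)$. Since $r\geq 3$, the run reaches $Q_2$, so $Z_1\neq\varnothing$, which forces $y_1<y_2$. By Lemma \ref{lem: induction}, $v_1\in R_0^b$ and $v_2\in R_0^u$, and the boundary restriction $y\in[1,n-2]$ then gives $x_1\in[q,m-1]$ and $x_2\in[0,p]$. That is $(m-q)(p+1)$ placements. We then check that the $Q_1$ restrictions, given $y_1<y_2$, exclude exactly the single placement $(x_1,x_2)=(m-1,0)$:
\begin{itemize}
\item If neither vertex of $Q_0$ is a corner, the placement $(m-1,0)$ would require $y_1>y_2$, which is incompatible with $y_1<y_2$.
\item If $v_{-1}=(0,n-1)$, then $p=0$ forces $x_2=0$, and the restriction $x_1\neq m-1$ removes exactly $(m-1,0)$.
\item If $v_0=(m-1,0)$, the symmetric argument applies.
\end{itemize}
Both vertices of $Q_0$ cannot be corners, because $Q_0$ is admissible. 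Conversely, Lemma \ref{lem: coordinate independence} shows that every other placement is compatible with every pair of profile realizations.

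\textbf{Step 2: the two binomial factors.} For $q-p\geq 2$, apply Lemma \ref{lem: bijection between collections} and Lemma \ref{lem: coordinate independence} with $s=r-1$ and $Z_{r-1}=\varnothing$. Together they put admissible runs with the given $(x_1,x_2)$ in bijection with pairs of realizations $(H_{r-1},V_{r-1})$. We need that all forced-termination restrictions are automatically satisfied, which Lemma \ref{lem: coordinate independence} verifies, and that the selected vertices are distinct, which is Lemma \ref{lem: distinctness of selected vertices}. Corollary \ref{coro: profile counts}, built on Lemma \ref{lem: numbers of entries and strict comparisons}, counts the realizations as $\binom{q-p+r-3}{2r-4}$ horizontal profiles and $\binom{n+r-3}{2r-2}$ vertical profiles. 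Multiplying the three independent factors gives the stated formula.

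\textbf{Step 3: the case $q-p=1$.} This is the main obstacle, since Corollary \ref{coro: profile counts} assumes $q-p>1$. Here the forced-termination restriction makes $Z_1=\varnothing$, so no run from this $Q_0$ reaches $Q_2$. Every output therefore has size $4<2r$, and the true count is $0$. The formula agrees: $\binom{r-2}{2r-4}=0$ because $r-2<2r-4$ for $r\geq 3$, using the paper's convention $\binom ab=0$ when $a<b$.

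More generally, the convention also covers $Q_0$ with $q-p\geq 2$ but too small to admit $r-1$ stages: there Lemma \ref{lem: counting mixed chains} already returns $0$. I would double-check that the $r=3$ case, which has zero horizontal strict comparisons, fits Lemma \ref{lem: counting mixed chains} without special handling.
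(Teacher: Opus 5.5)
Your proposal is correct and follows the paper's proof essentially step for step. You count admissible runs through the $(x_1,x_2)$ placements, combine the two profile counts from Corollary~\ref{coro: profile counts} using Lemma~\ref{lem: coordinate independence}, obtain injectivity from Lemma~\ref{lem: construction-run-injective}, and let $q-p=1$ contribute $0$.

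Your version is somewhat more explicit than the paper's in two places. You derive the factor $(p+1)(m-q)-1$ from the $Q_1$ restrictions under $y_1<y_2$, and you check that $\binom{r-2}{2r-4}=0$ for $r\ge 3$. The $r=3$ case you flag needs no special handling, since Lemma~\ref{lem: counting mixed chains} applies directly with $S=\varnothing$.
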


\begin{proof}
    By Construction \ref{construction}, the number of admissible
    horizontal placements of $Q_1$ is $(p+1)(m-q)-1$.

    If $q - p = 1$, our run terminates after $Q_1$ is chosen, so our expression for the number of $2r$-minimals generated from $Q_0$ with $r \geq 3$ is $0$. Thus, suppose $q - p > 1$, so by Corollary~\ref{coro: profile counts}, the number of horizontal and vertical profiles are
    \begin{equation*}
        \binom{q-p+r-3}{2r-4} \qquad \text{and} \qquad \binom{n+r-3}{2r-2}.
    \end{equation*}
    By Lemma~\ref{lem: coordinate independence}, these choices may be combined independently. The multiplication principle therefore gives
    \begin{equation*}
        \left((p+1)(m-q)-1\right) \binom{q-p+r-3}{2r-4} \binom{n+r-3}{2r-2}.
    \end{equation*}
    By Lemma \ref{lem: construction-run-injective}, distinct labeled runs with fixed $Q_0$ produce distinct vertex sets, so each choice as counted is unique.
\end{proof}

The following lemma covers the case where $r=2$, which may be considered directly.

\begin{lemma} \label{lem: r=2}
    For a fixed $Q_0 = \{(p, n-1), (q, 0)\}$ with $0 \leq p < q \leq m-1$, there are 
    \begin{equation*}
        ((p+1)(m-q)-1)\binom{n-1}{2}
    \end{equation*}
    $4$-minimals for which $Q_0$ is the unique opposite-side boundary vertex pair.
\end{lemma}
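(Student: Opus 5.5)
We count 4-minimals whose output is $\hat{\mathcal{Q}}_1$, i.e.\ runs terminating after $Q_1$, with $Q_0$ the unique opposite-side boundary pair. By Theorems \ref{thm: soundness} and \ref{thm: completeness} and Lemma \ref{lem: construction-run-injective}, such sets correspond bijectively to admissible choices of $Q_1=\{v_1,v_2\}$ with $Z_1=\varnothing$. The exception is the perpendicular case $x_1=m-1,x_2=0$, which we exclude because then $Q_1$ is a second opposite-side pair. By Lemma \ref{lem: induction}, $Z_1=\mathcal{Z}(p,q,y_1,y_2)$. Hence termination after $Q_1$ is equivalent to $y_2\le y_1$. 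Admissibility consists of $v_1\in R_0^b$ with $x_1\in[q,m-1]$, and $v_2\in R_0^u$ with $x_2\in[0,p]$. In both cases the boundary restriction gives $y_1,y_2\in[1,n-2]$.

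The count splits into horizontal and vertical parts.

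\emph{Horizontal part.} There are $(m-q)(p+1)$ pairs $(x_1,x_2)$. We remove the single placement $(m-1,0)$. This removal has three justifications. If neither $v_{-1}$ nor $v_0$ is a corner, $(m-1,0)$ together with $y_1>y_2$ gives a 4-minimal with a second, perpendicular boundary pair, so it is excluded from this count. If $v_{-1}=(0,n-1)$, the $Q_1$ restriction $x_1\neq m-1$ with $x_2=0$ forced removes exactly the $(m-1,0)$ placement. The case $v_0=(m-1,0)$ is symmetric. Either way, $(p+1)(m-q)-1$ horizontal placements remain.

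\emph{Vertical part.} Here we need $1\le y_2\le y_1\le n-2$. Apart from the diagonal $y_1=y_2$, this gives $\binom{n-2}{2}+(n-2)=\binom{n-1}{2}$ weakly ordered pairs, which agrees with Lemma \ref{lem: counting mixed chains} for $L=2$, $s=0$ on $[1,n-2]$.

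The main obstacle is to check that every one of these weakly ordered choices really gives a minimal resolving set, and that the horizontal and vertical choices combine independently.

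\begin{itemize}
\item \emph{Equal heights.} Take the case $y_1=y_2$ with $(x_1,x_2)\neq(m-1,0)$. Soundness (Theorem \ref{thm: soundness}) covers this case, since the construction allows weak termination. The only concern is a hidden 3-minimal. Lemma \ref{lem: perpendicular pair restrictions} flags one only when $x_1=m-1$ and $x_2=0$, and that placement has already been removed.
\item \emph{Constraint $q-p=1$.} The forced-termination constraint for $q-p=1$ is $y_1\ge y_2$, which is exactly our condition, so it imposes nothing further.
\item \emph{Unique boundary pair.} $Q_0$ stays the unique opposite-side boundary pair. The vertices $v_1,v_2$ have $y\in[1,n-2]$, so they are not on the horizontal sides. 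They could both lie on vertical sides only when $x_1=m-1$ and $x_2=0$, which is excluded.
\end{itemize}

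Multiplying the horizontal and vertical counts, and using injectivity from Lemma \ref{lem: construction-run-injective}, gives $((p+1)(m-q)-1)\binom{n-1}{2}$.
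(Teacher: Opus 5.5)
Your proposal is correct and follows the paper's proof. You count $(p+1)(m-q)-1$ horizontal placements of $Q_1$, removing the single placement $(x_1,x_2)=(m-1,0)$ that puts both vertices on vertical sides. You then count $\binom{n-1}{2}$ weakly ordered vertical choices $1\le y_2\le y_1\le n-2$, forced by termination after $Q_1$, and multiply.

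Your extra checks make explicit what the paper's short argument leaves implicit: soundness in the equal-height case, the $q-p=1$ restriction, and uniqueness of $Q_0$ as the opposite-side pair. In the uniqueness check you only consider $v_1,v_2$ forming a second pair. A corner of $Q_0$ pairing with $v_1$ or $v_2$ is also possible in principle, but it is ruled out by the $Q_1$ restrictions you already imposed.
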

\begin{proof}
    For a fixed $Q_0 = \{(p, n-1), (q,0)\}$ where $p < q$, there are $(p+1)(m-q) - 1$ horizontal placements of $Q_1$, as there are $p+1$ choices for $v_2$, $m-q$ choices for $v_1$, and we subtract $1$ for the case in which both are boundary vertices. Since the construction terminates after $Q_1$, its vertical coordinates satisfy $1 \leq y_2 \leq y_1 \leq n-2$, which gives $\binom{n-1}{2}$ possibilities for the vertical coordinates. Therefore, for a fixed $Q_0$, there are 
    \begin{equation*}
        ((p+1)(m-q)-1)\binom{n-1}{2}
    \end{equation*}
    $4$-minimals for which $Q_0$ is the unique opposite-side boundary vertex pair.
\end{proof}
This is the $r=2$ specialization of the summand from Lemma \ref{lem: count for a fixed initial pair}. Thus, Lemma \ref{lem: evaluation of the initial-pair sum} uses this same summand for every $r \geq 2$ to count the $2r$-minimals with a unique opposite-side boundary vertex pair. The $4$-minimals with two perpendicular pairs of opposite-side boundary vertices are treated separately.
\begin{lemma}
    \label{lem: evaluation of the initial-pair sum}
    For every $r \geq 2$,
    \begin{equation*}
        \sum_{0\le p<q\le m-1} \left((p+1)(m-q)-1\right) \binom{q-p+r-3}{2r-4} = \frac{m+3r-1}{2r} \binom{m+r-2}{2r-1}.
\end{equation*}
\end{lemma}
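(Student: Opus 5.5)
The plan is to collapse the double sum to a single sum over the gap $d=q-p$, evaluate the inner sum over $p$ in closed form, and then apply a Chu--Vandermonde convolution twice.

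\textbf{Step 1: sum over $p$ for fixed $d$.} The weight $\binom{q-p+r-3}{2r-4}$ depends only on $d$, which ranges over $[1,m-1]$. For fixed $d$, put $a=p+1$ and $b=m-q=m-p-d$. Then $a,b\ge 1$ and $a+b=N:=m-d+1$. The inner sum becomes
\begin{equation*}
    \sum_{\substack{a+b=N\\ a,b\ge1}}(ab-1)=\binom{N+1}{3}-(N-1)=\binom{m-d+2}{3}-\binom{m-d}{1}.
\end{equation*}
Here I use the standard identity $\sum_{a+b=N}ab=\binom{N+1}{3}$, which follows by counting $3$-subsets of $[1,N+1]$ by their middle element. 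So the left-hand side equals
\begin{equation*}
    \sum_{d\ge1}\left[\binom{m-d+2}{3}-\binom{m-d}{1}\right]\binom{d+r-3}{2r-4}.
\end{equation*}

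\textbf{Step 2: two Vandermonde convolutions.} Reindex by $j=d+r-3$. I will use
\begin{equation*}
    \sum_{j}\binom{A-j}{\alpha}\binom{j}{\beta}=\binom{A+1}{\alpha+\beta+1},
\end{equation*}
summed over all $0\le j\le A$.

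For the first term, $m-d+2=(m+r-1)-j$, so $A=m+r-1$, $\alpha=3$ and $\beta=2r-4$. This gives $\binom{m+r}{2r}$.

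For the second term, $m-d=(m+r-3)-j$, so $A=m+r-3$, $\alpha=1$ and $\beta=2r-4$. This gives $\binom{m+r-2}{2r-2}$.

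Before invoking the identity I must check that restricting to $1\le d\le m-1$ loses nothing.
\begin{itemize}
    \item At the lower end, $d\ge1$ means $j\ge r-2$. Since $r-2\le 2r-4$ for $r\ge2$, every omitted $j<r-2$ has $\binom{j}{2r-4}=0$. The case $r=2$ needs a moment's care, because there $\binom{j}{0}=1$ for all $j\ge0$; but $j<r-2=0$ is empty, so nothing is omitted.
    \item At the upper end, both $\binom{m-d+2}{3}$ and $\binom{m-d}{1}$ vanish at $d=m$. For $d>m$, that is $j>A$ in the second convolution, I am relying on the range of the Vandermonde identity stopping at $j\le A$, so these terms are never included.
\end{itemize}
This boundary bookkeeping under the paper's convention $\binom ab=0$ for $a<b$ is the step I expect to be most error-prone. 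I would verify it separately for $r=2$ and for $r\ge3$.

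\textbf{Step 3: algebraic simplification.} It remains to show
\begin{equation*}
    \binom{m+r}{2r}-\binom{m+r-2}{2r-2}=\frac{m+3r-1}{2r}\binom{m+r-2}{2r-1}.
\end{equation*}
Set $Y=\binom{m+r-2}{2r-2}$. Then
\begin{equation*}
    \binom{m+r}{2r}=\frac{(m+r)(m+r-1)}{2r(2r-1)}\,Y,
    \qquad
    \binom{m+r-2}{2r-1}=\frac{m-r}{2r-1}\,Y.
\end{equation*}
So the claim reduces to the polynomial identity
\begin{equation*}
    (m+r)(m+r-1)-2r(2r-1)=(m-r)(m+3r-1),
\end{equation*}
which follows by expanding both sides. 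Writing everything as a multiple of $Y$ also covers the degenerate case $m=r$ (or $m<r$, where every binomial involved is $0$) without dividing by $m-r$.
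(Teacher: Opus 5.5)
Your proof is correct and follows the paper's argument: you reduce to the gap $d=q-p$, evaluate the inner sum as $\binom{m-d+2}{3}-(m-d)$, apply Vandermonde to get $\binom{m+r}{2r}-\binom{m+r-2}{2r-2}$, and finish with the factorization $(m+r)(m+r-1)-2r(2r-1)=(m-r)(m+3r-1)$. The differences are cosmetic. You use a Vandermonde convolution with $\alpha=1$ for the second sum, where the paper iterates the hockey-stick identity, and you add boundary bookkeeping that the paper omits; in the first convolution, the extra terms $d=m+1,m+2$ lie inside the range $j\le A$ but vanish because $\binom{1}{3}=\binom{0}{3}=0$.
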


\begin{proof}
    Set $d=q-p$. For fixed $d$, the variable $p$ ranges over $0\le p\le m-d-1$, and $q=p+d$.
    
    Therefore the left-hand side equals
    \begin{equation*}
        \sum_{d=1}^{m-1} \binom{d+r-3}{2r-4} \sum_{p=0}^{m-d-1} \left((p+1)(m-p-d)-1 \right).
    \end{equation*}
    Let $t=m-d$. Then
    \begin{equation*}
        \sum_{p=0}^{m-d-1} (p+1)(m-p-d) = \sum_{j=1}^{t} j(t+1-j) = \binom{t+2}{3} = \binom{m-d+2}{3}.
    \end{equation*}
    Hence the sum is
    \begin{equation*}
        S_r(m) = \sum_{d=1}^{m-1} \binom{m-d+2}{3} \binom{d+r-3}{2r-4}
        - \sum_{d=1}^{m-1} (m-d) \binom{d+r-3}{2r-4}.
    \end{equation*}
    
    For the first sum, Vandermonde's convolution gives
    \begin{equation*}
        \sum_{d=1}^{m-1} \binom{m-d+2}{3} \binom{d+r-3}{2r-4} = \binom{m+r}{2r}.
    \end{equation*}
    
    For the second sum, the hockey-stick identity gives
    \begin{align*}
        \sum_{d=1}^{m-1} (m-d) \binom{d+r-3}{2r-4} = \sum_{j=1}^{m-1} \sum_{d=1}^j \binom{d+r-3}{2r-4}= \sum_{j=1}^{m-1} \binom{j+r-2}{2r-3} =\binom{m+r-2}{2r-2} \\
        \implies 
        S_r(m) = \binom{m+r}{2r} - \binom{m+r-2}{2r-2} 
        = \binom{m+r-2}{2r-2} \left(\frac{(m+r)(m+r-1)} {(2r)(2r-1)} - 1 \right)
        \\
        =
        \binom{m+r-2}{2r-2} \frac{(m+r)(m+r-1) -(2r)(2r-1)}{(2r)(2r-1)}.
    \end{align*}
    The numerator factors as $(m+r)(m+r-1) -(2r)(2r-1) = (m-r)(m+3r-1)$, so
    \begin{equation*}
        S_r(m) = \binom{m+r-2}{2r-2} \frac{(m-r)(m+3r-1)}{(2r)(2r-1)}.
    \end{equation*}
    
    Finally,
    \begin{equation*}
        \binom{m+r-2}{2r-1} = \frac{m-r}{2r-1} \binom{m+r-2}{2r-2}.
    \end{equation*}
    Therefore
    \begin{equation*}
        S_r(m) = \frac{m+3r-1}{2r}\binom{m+r-2}{2r-1}.
    \end{equation*}
\end{proof}

\begin{lemma} \label{lem: one horizontal orientation}
    Let $r \geq 3$. The number of $2r$-minimal resolving sets generated in one fixed
    horizontal orientation is
    \begin{equation*}
        A_{2r}(m,n) = \frac{m+3r-1}{2r} \binom{m+r-2}{2r-1} \binom{n+r-3}{2r-2}.
    \end{equation*}
\end{lemma}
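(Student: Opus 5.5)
The plan is to sum the per-initial-pair count of Lemma~\ref{lem: count for a fixed initial pair} over all admissible initial pairs in the fixed orientation $v_{-1}=(p,n-1)$, $v_0=(q,0)$ with $p<q$. Then evaluate the sum with Lemma~\ref{lem: evaluation of the initial-pair sum}.

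\textbf{Step 1: make the summation range precise.} Construction~\ref{construction} forbids both $v_{-1}$ and $v_0$ from being corners, i.e.\ it excludes $(p,q)=(0,m-1)$. I will check that this excluded term contributes nothing. For $(p,q)=(0,m-1)$ the factor $(p+1)(m-q)-1$ equals $1\cdot 1-1=0$. Hence we may sum over all $0\le p<q\le m-1$.

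Pairs with $q-p=1$ need a similar check. For them, $\binom{q-p+r-3}{2r-4}=\binom{r-2}{2r-4}$, which vanishes for $r\ge3$ because $r-2<2r-4$ exactly when $r>2$. This is consistent with Lemma~\ref{lem: count for a fixed initial pair}, which already returns $0$ in that case. So the full sum is legitimate.

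\textbf{Step 2: handle overcounting.} Distinct labeled runs with the same $Q_0$ give distinct output sets by Lemma~\ref{lem: construction-run-injective}. For distinct $Q_0$ within this orientation, Corollary~\ref{coro: unique opposite side boundary vertex pair} applies, since $2r>4$. Each output then contains a unique opposite-side boundary pair, so the output determines $Q_0$, and runs from different $Q_0$ cannot produce the same set. Together these show that the orientation count is exactly the sum of the per-$Q_0$ counts.

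\textbf{Step 3: evaluate the sum.} Factor out $\binom{n+r-3}{2r-2}$, which does not depend on $p,q$. The remaining sum $\sum_{0\le p<q\le m-1}((p+1)(m-q)-1)\binom{q-p+r-3}{2r-4}$ equals $\frac{m+3r-1}{2r}\binom{m+r-2}{2r-1}$ by Lemma~\ref{lem: evaluation of the initial-pair sum}. This yields $A_{2r}(m,n)$.

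The only delicate point is Step 2: making sure that distinct initial pairs, and the labeling of $Q_0$ within the orientation, cannot produce the same set. The uniqueness corollary is the tool for that, and I expect it to suffice. The rest is direct substitution.
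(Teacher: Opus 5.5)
Your proposal is correct and matches the paper's proof: you factor out $\binom{n+r-3}{2r-2}$, observe that the excluded two-corner pair $(0,m-1)$ contributes zero, invoke the uniqueness corollary (together with run-injectivity) to rule out overcounting across initial pairs, and then apply the initial-pair sum lemma. Your additional check that the $q-p=1$ terms vanish because $\binom{r-2}{2r-4}=0$ for $r\ge 3$ is harmless; the paper leaves this implicit in the fixed-pair lemma.
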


\begin{proof}
    By Lemma~\ref{lem: count for a fixed initial pair},
    \begin{equation*}
        A_{2r}(m,n) = \binom{n+r-3}{2r-2}\cdot \sum_{0\le p<q\le m-1} \left((p+1)(m-q)-1\right) \binom{q-p+r-3}{2r-4}.
    \end{equation*}
    Note that the forbidden two-corner pair where $p = 0$ and $q = m-1$ contributes $0$. We may sum over all possible $Q_0$, as by Corollary \ref{coro: unique opposite side boundary vertex pair}, every output has a unique opposite-side boundary vertex pair, so different $Q_0$ pairs result in different vertex sets.
    
    Applying Lemma \ref{lem: evaluation of the initial-pair sum} gives
    \begin{equation*}
        A_{2r}(m,n) = \frac{m+3r-1}{2r} \binom{m+r-2}{2r-1} \binom{n+r-3}{2r-2}.
    \end{equation*}
\end{proof}

\begin{theorem} \label{thm: enumeration}
    Let $m,n \geq 3$ and $r \geq 3$. Then the number of minimal resolving sets of cardinality $2r$ in $P_m\square P_n$ is
    \begin{equation*}
        N_{2r}(m,n) = \frac{1}{r} \left[(m+3r-1) \binom{m+r-2}{2r-1} \binom{n+r-3}{2r-2} + (n+3r-1) \binom{n+r-2}{2r-1} \binom{m+r-3}{2r-2} \right].
    \end{equation*}
\end{theorem}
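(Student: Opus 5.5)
The plan is to assemble $N_{2r}(m,n)$ from the orientation counts of Lemma \ref{lem: one horizontal orientation}, using the symmetries of the grid to obtain the other orientations.

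\textbf{Orientations.} By Theorems \ref{thm: soundness} and \ref{thm: completeness}, every $2r$-minimal with $r\ge 3$ is produced by Construction \ref{construction} in some orientation. By Corollary \ref{coro: unique opposite side boundary vertex pair}, it contains a unique opposite-side boundary pair $Q_0$. This pair lies either on the two horizontal sides or on the two vertical sides. Within each case there are two sub-orientations:
\begin{itemize}
\item for horizontal sides, the top vertex is either left of the bottom vertex ($p<q$) or right of it ($p>q$), and the two are related by the reflection $(x,y)\mapsto(m-1-x,y)$;
\item for vertical sides, the analogous reflection relates the two sub-orientations.
\end{itemize}
The case $p=q$ is excluded: as in the proof of Theorem \ref{thm: completeness}, the two opposite vertices cannot lie in the same column. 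So every $2r$-minimal falls into exactly one of four classes, and the classes are disjoint because the pair $Q_0$ and its relative order are determined by the set itself.

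\textbf{Counting each class.} Each horizontal class is in bijection, via the reflection, with the outputs counted in Lemma \ref{lem: one horizontal orientation}. Hence each horizontal class has $A_{2r}(m,n)$ elements. For the vertical classes, I will apply the transpose-type isomorphism $P_m\square P_n\to P_n\square P_m$ used in Lemma \ref{lem: perpendicular pair restrictions}. This sends vertical-side pairs to horizontal-side pairs of $P_n\square P_m$, so each vertical class has $A_{2r}(n,m)$ elements, the count with $m$ and $n$ interchanged. The total is therefore
$$N_{2r}(m,n)=2A_{2r}(m,n)+2A_{2r}(n,m).$$
Substituting Lemma \ref{lem: one horizontal orientation}, the factor $2/(2r)=1/r$ gives exactly the stated formula.

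\textbf{Main obstacle.} The delicate point is justifying that the four classes neither overlap nor miss any set. I need to confirm that the Construction, stated in the single orientation $p<q$ on the horizontal sides, covers all four classes after applying symmetries, and that the $Q_1$ restrictions and the admissible-pair conditions transform correctly under these symmetries. For the latter, it suffices to note that the corner conditions are symmetric under the reflections and under the transposition. Disjointness follows from the uniqueness in Corollary \ref{coro: unique opposite side boundary vertex pair}, which holds because $2r>4$.
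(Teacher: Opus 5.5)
Your proposal is correct and follows essentially the same route as the paper: both split the $2r$-minimals into four disjoint orientation classes using the uniqueness of the opposite-side boundary pair (Corollary \ref{coro: unique opposite side boundary vertex pair}), obtain $N_{2r}(m,n)=2A_{2r}(m,n)+2A_{2r}(n,m)$ from grid symmetries, and substitute Lemma \ref{lem: one horizontal orientation}. The obstacle you flag about transporting the $Q_1$ restrictions resolves itself, because grid isomorphisms send $2r$-minimals to $2r$-minimals, and the fixed-orientation count already equals the number of $2r$-minimals whose unique opposite-side pair is horizontal with $p<q$.
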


\begin{proof}
    By Theorem \ref{thm: completeness}, the four oriented construction classes are exhaustive. By Corollary \ref{coro: unique opposite side boundary vertex pair}, every $2r$-minimal with $r \geq 3$ has a unique pair of opposite-side boundary vertices. The pair of sides determines whether the orientation is horizontal or vertical, while the relative order of the coordinates determines the reflective orientation. Therefore, the four oriented construction classes are disjoint.
    
    The two horizontal orientations contribute $2A_{2r}(m,n)$, while the two vertical orientations contribute $2A_{2r}(n,m)$.
    Therefore, $N_{2r}(m,n) = 2A_{2r}(m,n) + 2A_{2r}(n,m)$.
    
    By Lemma \ref{lem: one horizontal orientation},
    \begin{equation*}
        N_{2r}(m,n) = 2\left[\frac{m+3r-1}{2r}\binom{m+r-2}{2r-1} \binom{n+r-3}{2r-2} \right] + 2 \left[\frac{n+3r-1}{2r} \binom{n+r-2}{2r-1} \binom{m+r-3}{2r-2} \right].
    \end{equation*}
    Simplifying gives
    \begin{equation*}
        N_{2r}(m,n) = \frac{1}{r}\left[(m+3r-1)\binom{m+r-2}{2r-1}
        \binom{n+r-3}{2r-2} + (n+3r-1)\binom{n+r-2}{2r-1}\binom{m+r-3}{2r-2}\right].
    \end{equation*}
\end{proof}

The binomial-coefficient convention accounts for
infeasible grid dimensions. For instance,
\begin{equation*}
    \binom{m+r-2}{2r-1}=0
\end{equation*}
whenever $m<r+1$. Thus, no separate case is required when a grid is too narrow to support a $2r$-minimal in the corresponding orientation.

The following corollary combines the results of Corollary \ref{coro: even size}, Subsection \ref{section: size 3 derivation}, and Theorem \ref{thm: enumeration} to provide a complete enumeration of the minimal resolving sets of rectangular grid $P_m \square P_n$.

\begin{corollary} \label{coro: finalenum}
    Let $m, n \geq 3$. For every positive integer $k$, $P_m \square P_n$ has $k$-minimals if and only if $k \in \{2,3\} \cup \{2r \mid 2 \leq r \leq \min(m,n)-1 \}$. Then, the number of $k$-minimals, $N_k(m,n)$, is 
    \begin{align*}
      N_k(m,n) = \begin{dcases}
        4 & k=2\\
        2\binom{m}{3} + (n-2)(n-1)m + 2\binom{n}{3} + (m-2)(m-1)n - 2(m  + n - 4) & k=3
        \\
        \frac{1}{2}\left[(m+5)\binom{m}{3}\binom{n-1}{2} + (n+5)\binom{n}{3} \binom{m-1}{2}\right] + 2\binom{m-2}{2} \binom{n-2}{2} & k=4\\
        \frac{1}{r}\left[(m+3r-1)\binom{m+r-2}{2r-1}
        \binom{n+r-3}{2r-2} + (n+3r-1)\binom{n+r-2}{2r-1}\binom{m+r-3}{2r-2}\right] & k = 2r\\
        0 & \text{else,}
        \end{dcases}
    \end{align*}
    where $3 \leq r \leq \min(m,n) -1$.
\end{corollary}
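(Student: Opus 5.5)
The plan is to assemble Corollary~\ref{coro: finalenum} case by case from results already established, with the only new work being the $k=4$ count. The cases $k=2$ (the four side-sharing corner pairs of \cite{melter1984metric}) and $k=3$ (Subsection~\ref{section: size 3 derivation}) are quoted directly. For $k\ge 5$ odd, Corollary~\ref{coro: even size} gives $N_k=0$. For $k=2r$ with $r\ge 3$, the formula is Theorem~\ref{thm: enumeration}. The existence range also follows from this formula. Each binomial $\binom{m+r-2}{2r-1}$ or $\binom{n+r-2}{2r-1}$ vanishes exactly when $m<r+1$ or $n<r+1$, and the companion factors are positive. Hence $N_{2r}>0$ iff $r\le \max(m,n)-1$ in at least one orientation. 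I need to check this against the claimed range $r\le\min(m,n)-1$. By symmetry, suppose $m\ge n$. The first term needs $\binom{n+r-3}{2r-2}>0$, i.e.\ $n\ge r+1$. The second term needs $\binom{m+r-3}{2r-2}$ and $\binom{n+r-2}{2r-1}$ positive, i.e.\ $n\ge r+1$ again. So positivity holds iff $r\le\min(m,n)-1$, which matches the bound $\dim^+=2\min(m,n)-2$ of \cite{andersen2016minimum}. Finally, $N_k=0$ for $k=1$ because the metric dimension is $2$.

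For $k=4$, I split the $4$-minimals into two disjoint classes according to Lemma~\ref{lem: perpendicular pair restrictions} and Corollary~\ref{coro: unique opposite side boundary vertex pair}.

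\emph{Class (a): exactly one opposite-side boundary pair.} For a fixed horizontal orientation, Lemma~\ref{lem: r=2} counts these, giving $\binom{n-1}{2}$ times the sum in Lemma~\ref{lem: evaluation of the initial-pair sum} at $r=2$. That sum equals $\frac{m+5}{4}\binom{m}{3}$. There are two horizontal orientations ($p<q$ and its reflection), so the horizontal contribution is $\frac12(m+5)\binom{m}{3}\binom{n-1}{2}$, and symmetrically for the vertical sides. These four classes are disjoint because the unique pair and its ordering determine the orientation, and injectivity within a fixed $Q_0$ comes from Lemma~\ref{lem: construction-run-injective}.

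\emph{Class (b): two perpendicular opposite-side pairs.} By Lemma~\ref{lem: perpendicular pair restrictions}, these sets have $Q_0$ on the horizontal sides with no corner, $Q_1=\{(m-1,y_1),(0,y_2)\}$ with $y_1>y_2$, both non-corner, and the run ends at $Q_1$. Such a set is determined by choosing $p<q$ in $[1,m-2]$ and $y_2<y_1$ in $[1,n-2]$, which gives $\binom{m-2}{2}\binom{n-2}{2}$ sets per horizontal orientation. Now I must avoid double counting, since each such set is produced from both its horizontal pair and its vertical pair. The symmetric description is this: the two horizontal vertices are in one of two relative orders, and the vertical pair is then forced into the opposite relative order. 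Choosing the two relative orders gives exactly two configurations, so class (b) contributes $2\binom{m-2}{2}\binom{n-2}{2}$.

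The main obstacle is this final bookkeeping for class (b). I would verify it by parametrizing every such set directly as $\{(p,n-1),(q,0),(m-1,y_1),(0,y_2)\}$ or its reflection $\{(q,n-1),(p,0),(m-1,y_2),(0,y_1)\}$ with $p<q$, and using Lemma~\ref{lem: perpendicular pair restrictions}, applied in both orientations, to decide which sign pattern of $(q-p,\,y_1-y_2)$ is minimal. This confirms the count of two per choice of coordinate pairs. I would also confirm that class (b) sets are excluded from the counts in Lemma~\ref{lem: r=2}, because the restriction $(x_1,x_2)\neq(m-1,0)$ removes them. Adding classes (a) and (b) then gives the stated $k=4$ formula.
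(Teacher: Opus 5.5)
Your proposal is correct and follows the paper's proof essentially step for step: $k=2,3$ and odd $k$ are quoted, $k=2r\ge 6$ comes from Theorem~\ref{thm: enumeration} with the same binomial positivity check, and $k=4$ is assembled from Lemma~\ref{lem: r=2} with Lemma~\ref{lem: evaluation of the initial-pair sum} at $r=2$, plus the $2\binom{m-2}{2}\binom{n-2}{2}$ perpendicular-pair sets from Lemma~\ref{lem: perpendicular pair restrictions}. Two small fixes: delete the stray claim that the companion binomials are always positive, since your later check giving the $\min(m,n)$ bound is the correct one; and state explicitly that the $k=3$ and $k=4$ expressions are positive for all $m,n\ge 3$, which the existence biconditional requires.
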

\begin{proof}
    By Melter and Tomescu, the $2$-minimals for grids are the minimum resolving sets, which are the $4$ pairs of two corner vertices sharing a side \cite{melter1984metric}. The expression for $k=3$ is from subsection \ref{section: size 3 derivation} and is positive for all $m,n \ge 3$, so $3$-minimals always exist.

    For $k = 4$, Lemma \ref{lem: r=2} counts the $4$-minimals for which the chosen $Q_0$ is the unique opposite-side boundary pair. By Lemma \ref{lem: evaluation of the initial-pair sum}, summing over all $Q_0$ for one fixed horizontal orientation gives
    \begin{equation*}
        \binom{n-1}{2}\frac{m+5}{4}\binom{m}{3}.
    \end{equation*}
    The two horizontal orientations and two vertical orientations respectively contribute 
    \begin{equation*}
        \frac{m+5}{2}\binom{m}{3}\binom{n-1}{2} \qquad \text{and} \qquad \frac{n+5}{2}\binom{n}{3}\binom{m-1}{2}.
    \end{equation*}
    Since every set counted here has a unique opposite-side boundary pair, these four orientations are disjoint.
    
    The preceding sum omits exactly the $4$-minimals with two perpendicular opposite-side boundary pairs. By Lemma \ref{lem: perpendicular pair restrictions}, these are exactly the exceptional minimal resolving sets with $4$ boundary vertices, none being corners.
    
    We have two cases depending on whether $p < q$ or $p > q$. Take $p < q$. Then, we have $\binom{m-2}{2}$ ways of choosing the two vertices on the horizontal boundaries and $\binom{n-2}{2}$ ways of choosing the two vertices on the vertical boundaries. The $p>q$ case is analogous, resulting in a total of 
    \begin{equation*}
        2\binom{m-2}{2} \binom{n-2}{2}
    \end{equation*}
    additional minimal resolving sets for $k=4$, or a total of
    \begin{equation*}
        \frac{1}{2}\left[(m+5)\binom{m}{3}\binom{n-1}{2} + (n+5)\binom{n}{3} \binom{m-1}{2}\right] + 2\binom{m-2}{2} \binom{n-2}{2}
    \end{equation*}
    $4$-minimals. This expression is positive for all $m,n \geq 3$, so $4$-minimals exist for every rectangular grid with $m,n \geq 3$.
    
    For even values of $k > 4$, when $k=2r$ and $r \geq 3$, we have
    \begin{align*}
        \binom{m+r-2}{2r-1} > 0 \iff \binom{m+r-3}{2r-2} >0 \iff m - 1 \geq r,
        \\
        \binom{n+r-2}{2r-1}>0 \iff \binom{n+r-3}{2r-2} >0 \iff n-1 \geq r,
    \end{align*}
    so $N_{2r}(m,n) >0 \iff r \leq \min(m,n)-1$. For odd values of $k > 4$, $N_k(m,n) = 0$ by Corollary \ref{coro: even size}.
\end{proof}

\section{Conclusion and Further Directions}

We have completed the characterization and enumeration of inclusion-minimal resolving sets for rectangular grid graphs. The recursive construction generates exactly the minimal resolving sets of cardinality at least $4$, while the resulting profile encoding yields closed formulas for every possible cardinality. Together with the known descriptions of $2$-minimals and $3$-minimals, this determines the entire minimal-resolving-set structure of $P_m\square P_n$.

Conceptually, the construction shows that a global metric condition can be governed by a nested sequence of local geometric constraints. This mechanism may extend beyond rectangular grids. As metric dimension has been investigated for Cartesian products of graphs \cite{caceres2007cartesian}, it would be interesting to determine which Cartesian products admit exact structural characterizations and closed enumerations of their minimal resolving sets. Natural next cases, also suggested by Andersen et al. \cite{andersen2016minimum}, include cylindrical grids $P_m \square C_n$ and toroidal grids $C_m \square C_n$, where the disappearance of one or both pairs of boundary sides prevents direct application of the construction provided here for rectangular grids $P_m \square P_n$. Higher-dimensional grids are another natural case to examine, for which an extension of the construction may be applicable.

\section*{Acknowledgments}

The author would like to thank Jesse Geneson for his guidance, feedback, and support throughout this project. The author also thanks the MIT PRIMES-USA program for fostering this research experience and providing a supportive environment in which this research was conducted.

OpenAI’s ChatGPT (GPT-5.6) was used as an auxiliary tool for manuscript revision. It was used as a referee to identify possible grammatical, stylistic, organizational, and notational issues. The research question, central construction, mathematical results, proofs, enumeration arguments, computations, and conclusions were developed by the author under the guidance of the supervising instructor. No AI-generated citation, mathematical statement, or code was incorporated without manual verification.

\printbibliography
\addcontentsline{toc}{section}{References}
\end{document}